\providecommand{\U}[1]{\protect\rule{.1in}{.1in}}
\newtheorem{theorem}{Theorem}[section]
\newtheorem{lemma}[theorem]{Lemma}
\newtheorem{proposition}[theorem]{Proposition}
\newtheorem{notation}[theorem]{Notation and convention}
\newtheorem{definition}[theorem]{Definition}
\newtheorem{claim}[theorem]{Claim}
\newtheorem{example}[theorem]{Example}
\newtheorem{conjecture}[theorem]{Conjecture}
\newtheorem{remark}[theorem]{Remark}
\def\U{{\mathcal{U}}}
\def\ov{\overline}
\def\bc{{\mathbb{C}}}
\def\SO(3){\operatorname{{SO(3)}}}
\def\bc{\operatorname{{\mathbb C}}}
\def\ov\bc{\operatorname{\overline{\mathbb{C}}}}
\begin{document}
\title{On holomorphic vector fields with many closed orbits}
\author{Leonardo C\^amara \& Bruno Scardua}
\maketitle

\begin{abstract}
We state some generalizations of a theorem due to G. Darboux, which originally
states that a polynomial vector field in the complex plane exhibits a rational
first integral and has all its orbits algebraic provided that it exhibits
infinitely many algebraic orbits. In this paper, we give an interpretation of
this result in terms of the classical Reeb stability theorems, for compact
leaves of (non-singular) smooth foliations. Then we give versions of Darboux's
theorem, assuring, for a (non-singular) holomorphic foliation of any
codimension, the existence of an open set of compact leaves provided that the
measure of the set of compact leaves is not zero. As for the case of
polynomial vector fields in the complex affine space of dimenion $m\geq2$, we
prove suitable versions of the above results, based also on the very special
geometry of the complex projective space of dimension $m$, and on the nature
of the singularities of such vector fields we consider.

\end{abstract}


\section{Introduction\label{section:intro}}

One of the reasons for studying algebraic solutions of algebraic ordinary
differential equations, is the fact that very basic examples of
\textit{transcendent} (non-algebraic) differentiable functions are given by
solutions of such equations. These solutions however are not common. In fact,
as it is well-known, for a generic choice on the coefficients of the
coordinates, a polynomial complex vector field admits no algebraic solution.
This is the subject of several works (cf. e.g. \cite{LinsNeto},
\cite{LinsNeto-Soares}, \cite{Soares1}, \cite{Soares2}, \cite{Soares3},
\cite{Soares5}, \cite{Soares6}).

On the other hand, an algebraic differential equation in dimension two with a
sufficiently large number of algebraic solutions has all of its solutions in
the algebraic class. This is stated in the following integrability theorem due
to J. G. Darboux:

\begin{theorem}
[Darboux \cite{Jouanolou}]\label{Theorem:Darbouxoriginal} Let $X$ be a
polynomial vector field in ${\mathbb{C}}^{2}$. If $X$ exhibits infinitely many
algebraic orbits, then all orbits are algebraic. In this case $X$ admits a
rational first integral.
\end{theorem}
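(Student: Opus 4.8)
The plan is to reduce the statement to the Darboux--Jouanolou mechanism relating invariant algebraic curves, their polynomial cofactors, and closed logarithmic $1$-forms. Writing $X=P\,\partial_x+Q\,\partial_y$ with $d=\max(\deg P,\deg Q)$, I first pass to the dual $1$-form $\omega=Q\,dx-P\,dy$ defining the same foliation $\mathcal F$. For an irreducible invariant algebraic curve $\{f=0\}$ one has $X(f)=L\,f$, equivalently $\omega\wedge\frac{df}{f}=L\,dx\wedge dy$, where the cofactor $L$ is a polynomial of degree at most $d-1$. The space of such cofactors has finite dimension $r=\binom{d+1}{2}$, and this finiteness is the engine of the whole argument.

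The first step is to convert the hypothesis into a lower bound on the number of invariant curves. Each algebraic orbit is Zariski dense in a unique irreducible invariant algebraic curve, namely its closure, and any such curve contains only finitely many orbits, since removing the finite singular set of $\mathcal F$ from an irreducible curve leaves finitely many leaves. Hence infinitely many algebraic orbits force infinitely many distinct irreducible invariant algebraic curves $f_1,f_2,\dots$; in particular I may select $N\ge r+2$ of them, with cofactors $L_1,\dots,L_N$.

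Next I exploit linear algebra in the cofactor space. Since $N>r$, the $L_i$ are $\mathbb C$-linearly dependent, and any relation $\sum_i\lambda_iL_i=0$ yields a closed logarithmic $1$-form $\eta_\lambda=\sum_i\lambda_i\frac{df_i}{f_i}$ with $\omega\wedge\eta_\lambda=0$, so that $\eta_\lambda$ defines $\mathcal F$; this already gives a generally multivalued Darboux first integral $\prod_i f_i^{\lambda_i}$. The refinement needed to obtain a \emph{rational} integral uses $N\ge r+2$: the space of relations then has dimension at least two, and the map $\lambda\mapsto\eta_\lambda$ is injective because distinct irreducible $f_i$ give distinct residues, so I obtain two $\mathbb C$-linearly independent closed forms $\eta_1,\eta_2$, both defining $\mathcal F$. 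Being proportional as forms, $\eta_1=g\,\eta_2$ with $g$ meromorphic; clearing the common denominator $\prod_i f_i$ shows $g$ is a ratio of polynomial $1$-forms, hence a rational function, and $g$ is non-constant precisely because $\eta_1,\eta_2$ are independent. Closedness of both forms gives $0=d\eta_1=dg\wedge\eta_2$, whence $dg\wedge\omega=0$, so $g$ is a first integral.

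Finally, a non-constant rational first integral $g=g_1/g_2$ forces every orbit to lie in some level set $\{g_1-c\,g_2=0\}$, which is algebraic and invariant; hence all orbits are algebraic, completing the proof. I expect the delicate point to be the rationality and non-constancy in the third paragraph: the linear dependence over $\mathbb C$ does not directly produce integer exponents, and the clean way around this is to pass to the ratio of two independent logarithmic forms rather than to manipulate the exponents $\lambda_i$ themselves --- this is exactly where the sharper bound $N\ge r+2$, as opposed to $N\ge r+1$, is essential.
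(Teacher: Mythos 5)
Your proof is correct, and it follows the same cofactor-counting skeleton as the paper's outline (dual $1$-form $\omega$, cofactor $\theta_f$ with $\omega\wedge df = \theta_f\, f\, dx\wedge dy$, finite dimensionality of the cofactor space). But the closing mechanism is genuinely different, and yours is the sound one. The paper's sketch asserts that infinitely many algebraic leaves produce two linearly independent \emph{polynomials} $f,g$ with $\omega\wedge df=\omega\wedge dg=0$, i.e.\ two polynomial first integrals, and then sets $R=f/g$. As literally stated this is too strong: the radial foliation $\omega = y\,dx - x\,dy$ has infinitely many algebraic leaves (all lines through the origin) and rational first integral $x/y$, yet Euler's relation $xf_x+yf_y=0$ forces any polynomial with $\omega\wedge df=0$ to be constant, so no such pair $f,g$ exists. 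The difficulty the paper glosses over is exactly the one you flag in your last paragraph: linear dependence of cofactors over $\mathbb{C}$ does not yield integer relations, so one cannot combine the $f_i$ multiplicatively into a polynomial with zero cofactor. Your detour through two independent closed logarithmic forms $\eta_1,\eta_2$ (independence via residues, using $N\geq r+2$) and the rational function $g$ with $\eta_1=g\,\eta_2$, $dg\wedge\omega=0$, is the standard Darboux--Jouanolou argument and correctly fills this gap; the paper itself leans on the citation to Jouanolou rather than on its own sketch, so your write-up is in effect a repaired and complete version of what the paper only outlines.
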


By an \textit{algebraic orbit} we mean a non-singular orbit which is contained
in an algebraic curve in $\mathbb{C}^{2}$. A rational function
$R(x,y)=P(x,y)/Q(x,y),\,P(x,y),\,Q(x,y)\in\mathbb{C}[x,y]$ is a \textit{first
integral} of $X$ if it is constant along the orbits of $X$. In this case, the
orbits are algebraic and contained in the curves $\lambda P-\mu Q=0$ for all
$(\lambda,\mu)\in\mathbb{C}^{2}\setminus\{0\}$. Thus
Theorem~\ref{Theorem:Darbouxoriginal} is a complete result in dimension two.

Notice that, in the above theorem, the degree of the algebraic leaves has an
upper bound (given by $\operatorname*{max}\{\operatorname{deg}%
(P),\operatorname{deg}(Q)\}$). This somehow motivates the proof which is
outlined as follows. The two basic facts are: (i) The vector field induces a
natural dual polynomial $1$-form $\omega$ of the same degree such that the
integral curves of $\omega$ are the orbits of $X$. (ii) Given a reduced
complex polynomial $f(x,y)$, the equation $\{f(x,y)=0\}$ defines an algebraic
orbit of $X$ if and only if $\frac{1}{f}(\omega\wedge df)=:\theta
_{f}(x,y)dx\wedge dy$ is a polynomial two-form, and in this case we have
$\operatorname{deg}(\theta_{f})=\operatorname{deg}(\omega)-1=(X)-1$. Since the
space of complex polynomials of degree less than $\operatorname{deg}(X)-1$ is
a finite dimensional complex vector space, the hypothesis of infinitely many
algebraic leaves ensures the existence of two linearly independent polynomials
$f(x,y),g(x,y)$ such that $\omega\wedge df=\omega\wedge dg=0$. Therefore
$R=f/g$ is a rational first integral for $X$.

\subsubsection*{Motivation for the foliation-geometrical approach}

The above algebraic argumentation is not available for the case of polynomial
vector fields in dimension $m\geq3$. A first problem involves the definition
of algebraic curve in dimension $m\geq3$ and the fact that not all algebraic
curves are given by complete systems of polynomial equations. Another
complication comes from the fact that it is not clear \textit{a priori} that
the dual space of any vector field $X$\ is generated by $m-1$ polynomial
$1$-forms satisfying the integrability conditions. Thus in order to understand
the higher dimension situation one should deploy other more geometrical
features. This is our main motivation for the introduction of the
\textsl{foliation framework} and also for our current approach.

Theorem~\ref{Theorem:Darbouxoriginal} is a sort of Reeb's complete stability
theorem for codimension one \textsl{projective foliations}, \textit{i.e.},
foliations in complex projective spaces. Let us first recall the classical
result (see for instance \cite{Godbillon, Reeb1}): \textsl{A smooth real
oriented foliation of real codimension one in a compact connected manifold is
a fibration by compact leaves if it exhibits a compact leaf with finite
fundamental group}. This result has many important consequences and motivates
several questions in the theory of foliations. For instance: \textit{Is it
true that a codimension one smooth foliation in a \textrm{(}%
connected\textrm{)} compact manifold with infinitely many compact leaves has
all leaves compact?} The answer is clearly no, but this is true if the
foliation is (transversely) real analytic.

There are versions of Reeb's complete stability theorem for the class of
\emph{holomorphic} foliations (see \cite{Brunella}). In the holomorphic
framework, it is proved in \cite{Brunella-Nicolau} that a
\textit{non-singular} transversely holomorphic \textit{codimension one}
foliation in a compact connected manifold admitting infinitely many compact
leaves exhibits a transversely meromorphic first integral. All foliations
mentioned so far are non-singular foliations. Similarly, in \cite{Ghys} it is
proved that any (possibly singular) \textit{codimension one} holomorphic
foliation in a compact manifold having infinitely many closed leaves admits a
meromorphic first integral. In particular, all leaves are \textsl{closed} off
the singular set.

The problem of bounding the number of closed (off the singular set) leaves of
a holomorphic foliation is known (at least in the complex algebraic framework)
as \textit{Jouanolou's problem}, thanks to the pioneering results in
\cite{Jouanolou}, and has a wide range of contributions and applications in
the algebraic-geometric setting.

From a more analytic-geometrical point of view, in \cite{Santos-Scardua} it is
proved a global stability theorem for codimension $k\geq1$ holomorphic
foliations transverse to fibrations. In \cite{scarduaJMAA} the second author
focus on the problem of existence of a \textit{stable} compact leaf under the
hypothesis of existence of a sufficiently large number of compact leaves.
Recall that a leaf $L$ of a \textit{compact} foliation $\mathcal{F}$ is
\textit{stable\/} if it has a fundamental system of saturated neighborhoods
(cf. \cite{Godbillon}, p. 376). The stability of a compact leaf $L\in
\mathcal{F}$ is equivalent to the finiteness of its holonomy group
$\operatorname{Hol}(\mathcal{F},L)$, and is also equivalent to the existence
of a local bound for the volume of the leaves close to $L$ (\cite{Godbillon},
Proposition 2.20, p. 103). As a partial converse, for smooth codimension one
foliations Reeb proves that a compact leaf admitting a neighborhood consisting
of compact leaves necessarily has finite holonomy. This is not true however in
codimension greater or equal to $2$. We also recall that a subset $X\subset M$
of a differentiable $m$-manifold has \textit{zero measure} if $M$ admits an
open cover by coordinate charts $\varphi:U\subset M\rightarrow{\varphi
}(U)\subset\mathbb{R}^{m}$ such that ${\varphi}(U\cap X)$ has zero measure
(with respect to the standard Lebesgue measure in $\mathbb{R}^{m}$).
Otherwise, we shall say that the set has \textit{positive measure}. In
\cite{scarduaJMAA} it is proved the following measure stability theorem for
non-singular holomorphic foliations: \vglue.1in \noindent\textbf{Theorem A}
(Measure stability theorem) \textsl{A holomorphic foliation $\mathcal{F}$ in a
compact complex manifold $M$ exhibits a compact stable leaf if and only if the
set $\Omega(\mathcal{F})\subset M$ of all compact leaves has positive measure.
} \vglue.1in In this paper we give a more general argumentation for the proof
of Theorem~A, based on the notion of \textit{measure concentration point} of a
subset of positive measure (cf.
Definition~\ref{definition:measureconcentration}). These ideas will be useful
in the remaining part of the paper, in which we deal with the so called
singular case. More precisely, we address the case of one-dimensional
holomorphic foliations in projective spaces. Below we explain why.

\subsubsection*{The singular foliations framework}

The structure of the orbits of a polynomial vector field $X$ in $\mathbb{C}%
^{m}$ is better understood by the study of their asymptotic behavior. For this
sake, we introduce the complex projective space $\mathbb{CP}^{m}$ as a natural
compactification of the affine space $\mathbb{C}^{m}$ and denote by
$E_{\infty}^{m-1}$ the hyperplane at infinity $E_{\infty}^{m-1}=\mathbb{CP}%
^{m}\setminus\mathbb{C}^{m}$. Then we study the structure of the orbits of $X$
in a neighborhood of the hyperplane at infinity. The best way to do that is by
considering the notion of one-dimensional holomorphic foliation with
singularities in complex manifolds. In few words, a one-dimensional
holomorphic foliation with singularities in a complex manifold $M$ consists of
a non-singular one-dimensional holomorphic foliation in a open subset
$U=M\backslash\operatorname*{Sing}(\mathcal{F})$ and a discrete set of points
(called \emph{singularities} of the foliation) denoted by
$\operatorname*{Sing}(\mathcal{F})$. The notions of leaf, holonomy and so on,
refer then to the underlying non-singular foliation and are defined as in the
classical framework. It is well-known that any polynomial vector field defines
a one-dimensional holomorphic foliation with singularities in the
corresponding projective space. Conversely, any projective foliation is
defined in an affine space by a polynomial vector field. Thus our approach
will be based on this correspondence.

By an \textit{algebraic leaf} of such a foliation, we mean a leaf contained in
an algebraic curve. Such leaves are finitely punctured algebraic curves with
such \textquotedblleft ends\textquotedblright\ at singular points of the
foliation. \emph{Algebraic leaves of foliations in projective spaces play, in
a certain sense, the role of compact leaves for (non-singular) smooth
foliations.}

For singular foliations we need to introduce a notion of stability for
algebraic leaves which at first sight seems to be stronger than one should
expected. Let $\mathcal{F}$ be a holomorphic foliation with isolated
singularities in a complex manifold $M$ of dimension $m\geq2$. Given a
non-singular point $q\in M\setminus\operatorname*{Sing}(\mathcal{F})$ and a
transverse disc $\Sigma_{q}$ centered at $q$, we shall denote by $L_{z}$ the
leaf of $\mathcal{F}$ through any $z\in\Sigma_{q}$. The \emph{virtual holonomy
group} of the foliation $\mathcal{F}$ with respect to the transverse section
$\Sigma_{q}$ and base point $q$ is defined as (cf. \cite{C-LN-S2}) the
subgroup $\operatorname*{Hol}\nolimits^{\operatorname*{virt}}(\mathcal{F}%
,\Sigma_{q},q)\subset\operatorname*{Diff}(\Sigma_{q},q)$ of germs of complex
diffeomorphisms that preserve the leaves of the foliation, i.e.,
\[
\operatorname*{Hol}\nolimits^{\operatorname*{virt}}(\mathcal{F},\Sigma
_{q},q)=\{f\in\operatorname*{Diff}(\Sigma_{q},q):\tilde{L}_{z}=\tilde
{L}_{f(z)},\forall z\in(\Sigma_{q},q)\}
\]
Clearly, the virtual holonomy group contains the holonomy group,
\textit{i.e.},
\[
\operatorname*{Hol}(\mathcal{F},L_{q},\Sigma_{q},q)\subset\operatorname*{Hol}%
\nolimits^{\operatorname*{virt}}(\mathcal{F},\Sigma_{q},q),
\]
where $L_{q}\subset M$ is the leaf of $\mathcal{F}$ through $q\in M$. If
$q_{1}$ and $q_{2}$ belong to the same leaf of $\mathcal{F}$, then the
corresponding virtual holonomy groups $\operatorname*{Hol}%
\nolimits^{\operatorname*{virt}}(\mathcal{F},\Sigma_{q_{1}},q_{1})$ and
$\operatorname*{Hol}\nolimits^{\operatorname*{virt}}(\mathcal{F},\Sigma
_{q_{2}},q_{2})$ are holomophically conjugate by a germ of (holonomy)
diffeomorphism $h\colon(\Sigma_{q_{1}},q_{1})\rightarrow(\Sigma_{q_{2}}%
,q_{2})$ for any transverse discs $\Sigma_{1}\ni q_{1}$ and $\Sigma_{2}\ni
q_{2}$. Thus we shall refer to the virtual holonomy group corresponding to a
leaf $L\in\mathcal{F}$ and denote it as $\operatorname*{Hol}%
\nolimits^{\operatorname*{virt}}(\mathcal{F},L)$, for general purposes.

\begin{definition}
[Finite virtual holonomy]\label{Definition:finitevirtualholonomy} \textrm{Let
$\Gamma\subset M$ be a connected invariant subset of }$\mathcal{F}$\textrm{.
We shall say that \textit{each virtual holonomy group of $\Gamma$ is finite}
if for each point $q\in\Gamma\setminus(\Gamma\cap\operatorname*{Sing}%
(\mathcal{F}))$ the virtual holonomy group of the leaf $q\in L_{q}%
\subset\Gamma$ is finite. }
\end{definition}

We stress that $\Gamma$ is a union of leaves and singularities, and \textit{a
priori} the virtual holonomy groups of distinct leaves in $\Gamma$ are not
related to each other. In order to relate such distinct groups we must
introduce the notion of Dulac correspondence associated to suitable situations
involving Siegel singularities (cf. Section~\ref{section:Dulac}).

As mentioned above, algebraic leaves play the role of compact leaves in the
framework of foliations in projective spaces. This idea can be extended in the
sense of the following:

\begin{definition}
[stable and quasi-compact leaf]\label{Definition:stablealgebraicleaf}
\textrm{A leaf of a foliation $\mathcal{F}$ in a complex manifold $M$ will be
called \textit{closed} if it is closed off the singular set
$\operatorname{Sing}(\mathcal{F})$. Therefore, in the one dimensional case the
classical Remmert-Stein extension theorem (\cite{Gunning-Rossi}) ensures that
the closure $\overline{L}\subset L\cup\operatorname{Sing}(\mathcal{F})\subset
M$ is a pure one-dimensional analytic subset of $M$. A closed leaf $L$ is
called \textit{stable }if its virtual holonomy group is finite. By a
\textit{quasi-compact} leaf of $\mathcal{F}$ we shall mean a closed leaf $L$
such that the closure $\overline{L}\subset M$ is compact. This may occur even
if the manifold is not compact, as in the case of product manifolds or fiber
spaces. }
\end{definition}

\begin{remark}
\textrm{If a compact leaf has finite holonomy, then by Reeb's local stability
lemma (Lemma~\ref{Lemma:localstability}) it has a fundamental system of
invariant neighborhoods with all leaves compact. As we shall see
(Section~\ref{section:burnside}, Lemma~\ref{bounded positive measure}), this
implies the finiteness of the leaf's virtual holonomy group. Thus the notion
of stable closed leaf extends the notion of stable compact leaf.}
\end{remark}

The notion of foliation with singularities is detailed in
Section~\ref{section:foliations}. Recall that a \textit{non-degenerate}
singularity of a one-dimensional holomorphic foliation in a complex manifold
is an isolated point in whose neighborhood the foliation is defined by a
holomorphic vector field with a non-singular linear part. Our next result
refers to quasi-compact leaves of foliations with non-degenerate singularities
in surfaces. \vglue.1in \noindent\textbf{Theorem B} \textsl{Let $\mathcal{F}$
be a holomorphic foliation of dimension one with non-degenerate singularities
in the complex surface $M^{2}$. Assume that the set $\Omega(\mathcal{F}%
)\subset M$, union of all quasi-compact leaves of $\mathcal{F}$, has positive
measure. Then $\Omega(\mathcal{F})$ contains some open nonempty set.} \vglue.1in

Indeed, in the above situation we prove the existence of a \textit{stable
graph} (cf. Definition~\ref{Definition:stablegraph}) for the foliation. In
case the surface is compact, this is just the result proved in
\cite{Brunella-Nicolau} for two dimensional complex manifolds.

A non-degenerate singularity is classified as a \textit{Siegel type}
singularity in case the origin $0\in\mathbb{R}^{2}$ belongs to the convex hull
of the set of eigenvalues of the linear part of the vector field at the
singular point. Otherwise, it is classified as a \textit{Poincaré type}
singularity (cf. Section~\ref{section:siegelpoincare}). A Siegel type
singularity in dimension $m$ will be called \textit{normal} if, up to a local
change of coordinates, the coordinate hyperplanes are invariant. In this case,
we may write the vector field as $X=\sum\limits_{j=1}^{m}\lambda_{j}x_{j}%
\frac{\partial}{\partial x_{j}}+(x_{1}\cdots x_{m})\cdot\,X_{2}(x_{1}%
,\cdots,x_{m})$, where $X_{2}$ is holomorphic and vanishes at the origin
$0\in\mathbb{C}^{m}$. As it is well-known, each Siegel type singularity in
dimension $m=2$ is normal (cf. \cite{Dulac},\cite{Mattei-Moussu}). In
dimension $m\geq3$ this occurs if the origin belongs to the interior of the
convex hull of the eigenvalues. In this case the singularity is called a
\textit{strict Siegel} type singularity \cite{Elizarov}.

In terms of foliations in projective spaces, our main result is called
Algebraic Stability theorem and reads as follows: \vglue.1in \noindent
\textbf{Theorem C} (Algebraic Stability theorem) \textsl{Let $\mathcal{F}$ be
a holomorphic foliation of dimension one with non-degenerate singularities in
the $m$-dimensional complex projective space $\mathbb{C}\mathbb{P}^{m}$.
Assume that the set $\Omega(\mathcal{F})\subset\mathbb{C}\mathbb{P}^{m}$,
union of all algebraic leaves of $\mathcal{F}$, has positive measure. Then
$\mathcal{F}$ has a stable algebraic leaf $L_{0}\subset\Omega(\mathcal{F})$.
If the dimension $m=3$ and the \textrm{(}Siegel\textrm{)} singularities are
\textit{normal,} then all singularities in $L_{0}$ are analytically
linearizable exhibiting local holomorphic first integrals.} \vglue.1in We
point-out that in case $m=3$ (and even in case $m\geq3$) we actually prove the
existence of (an algebraic leaf which is contained in) a stable graph (cf.
Definition~\ref{Definition:stablegraphanydimension}). In the situation of
Theorem~C above, we do not know whether there is an invariant nonempty open
subset of $\mathbb{C}\mathbb{P}^{m}$ consisting of algebraic leaves of
$\mathcal{F}$ (see Conjecture \ref{Conjecture:stablegraphproperties}).

\subsubsection*{Outline of the proof of Theorem~C}

A rough sketch of the argumentation in the paper is as follows (we focus on
Theorem~C): As a first step, we prove the existence of a measure concentration
algebraic leaf $L_{0}$, which means an algebraic leaf such that every
neighborhood has a positive measure set of algebraic leaves. Using this and
the fact that the singularities are non-degenerate (and the well known
analytic and topological descriptions of such singularities provided by the
Poincaré linearization theorem and the Poincaré-Dulac normal form), we are
able to prove that this leaf is contained in the interior of the set of
algebraic leaves. This comes from the study of the holonomy groups of the leaf
(via Burnside's theorem) and of the \textit{non-dicritical} (i.e., isolated
from the set of separatrices) adjacent leaves, i.e., leaves accumulating at
singular points contained in the closure of $L_{0}$. An important fact is the
passage, through the leaves, from a neighborhood of $L_{0}$ to a neighborhood
of such a non-dicritical adjacent leaf, which is granted by the construction
of a Dulac map associated to such a singularity corner. This technic allows us
to spread the finiteness properties from the leaf $L_{0}$ to its adjacent
leaves, which are proved to be also algebraic. Thus we construct a kind of
\textquotedblleft stable algebraic graph\textquotedblright\ for the foliation.
This invariant set has finite holonomy groups in a more wide sense, which
shall be introduced later. Also, an important fact is the notion of relative
order of a given leaf, introduced in this paper. The very special geometry of
the complex projective space, as well as the holomorphic character of the
foliation, play a fundamental role in the definition of this notion. Using
classical results from complex geometry, as Chow's theorem on the algebraicity
of analytic sets in complex projective spaces, and Remmert-Stein extension for
analytic subsets of open subsets of complex spaces, we are able to prove that
a leaf of the foliation is algebraic if and only if it has finite relative
order. This characterization and the so called transverse uniformity lemma
allow us to control the behavior of the leaves in the boundary of suitable
sets of algebraic leaves, proving under which conditions these boundary leaves
are also algebraic. In few words, a finite relative order leaf is algebraic,
and a leaf in the boundary of a set of leaves of uniformly bounded order is
also algebraic.

\section{Finite holonomy and local stability}

\label{section:burnside} We begin by recalling one of the very basic tools we
need in this paper. The classical local stability theorem of Reeb reads as
follows (\cite{C-LN,Godbillon}):

\begin{lemma}
[Reeb local stability theorem]\label{Lemma:localstability} Let $L_{0}$ be a
compact leaf with finite holonomy of a smooth foliation $\mathcal{F}$ of real
codimension $k\geq1$ in a manifold $M$. Then there is a fundamental system of
invariant neighborhoods $W$ of $L_{0}$ in $\mathcal{F}$ such that every leaf
$L\subset W$ is compact, has a finite holonomy group and admits a finite
covering onto $L_{0}$. Moreover, for each neighborhood $W$ of $L_{0}$ there is
an $\mathcal{F}$-invariant tubular neighborhood $\pi:W^{\prime}\subset
W\longrightarrow L_{0}$ of $L_{0}$ with the following properties:

\begin{enumerate}
\item Every leaf $L^{\prime}\subset W^{\prime}$ is compact with finite
holonomy group;

\item If $L^{\prime}\subset W^{\prime}$ is a leaf, then the restriction
$\left.  \pi\right\vert _{L^{\prime}}:L^{\prime}\longrightarrow L_{0}$ is a
finite covering map;

\item If $x\in L_{0}$, then $\pi^{-1}(x)$ is a transverse of $\mathcal{F}$;

\item There is an uniform bound $k\in\mathbb{N}$ such that for each leaf
$L^{\prime}\subset W^{\prime}$ we have $\#(L^{\prime}\cap\pi^{-1}(x))\leq k$.
\end{enumerate}
\end{lemma}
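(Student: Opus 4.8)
The plan is to reduce everything to the finite action of the holonomy group on a transversal and to the fact that, near a compact leaf, the foliation is modeled on a suspension. Fix a base point $p\in L_{0}$, a small embedded transverse disc $\Sigma$ at $p$, and let $H=\operatorname{Hol}(\mathcal{F},L_{0})\subset\operatorname{Diff}(\Sigma,p)$ be the holonomy group, finite by hypothesis. First I would extract a fundamental system of $H$-invariant discs: given any neighborhood basis of $p$ in $\Sigma$, intersecting each member over the finite group $H$ produces $H$-invariant discs $\Sigma_{\varepsilon}$, $0<\varepsilon\le\varepsilon_{0}$ (equivalently, after averaging a metric one may assume $H$ acts orthogonally on $\Sigma$, so round discs work). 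The immediate payoff is that each $H$-orbit in $\Sigma_{\varepsilon}$ is finite of cardinality dividing $|H|$.

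Next I would build the invariant tubular neighborhood. Covering the compact leaf $L_{0}$ by finitely many distinguished (flow-box) charts and composing the corresponding transition maps recovers the holonomy representation $\rho\colon\pi_{1}(L_{0},p)\to H$. Using this chain of charts one identifies a saturated neighborhood of $L_{0}$ with the suspension (associated bundle) $W_{\varepsilon}=\widetilde{L}_{0}\times_{\rho}\Sigma_{\varepsilon}$, where $\widetilde{L}_{0}\to L_{0}$ is the universal cover with deck group $\pi_{1}(L_{0},p)$ acting on $\Sigma_{\varepsilon}$ through $\rho$; the leaves of $\mathcal{F}$ inside $W_{\varepsilon}$ are exactly the images of $\widetilde{L}_{0}\times(H\cdot z)$. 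The projection onto the first factor descends to an $\mathcal{F}$-invariant submersion $\pi\colon W_{\varepsilon}\to L_{0}$ whose fibre over any $x\in L_{0}$ is a copy of $\Sigma_{\varepsilon}$ transverse to $\mathcal{F}$, which is property (3); letting $\varepsilon\to 0$ yields the required fundamental system of invariant neighborhoods, so for a prescribed $W$ one chooses $\varepsilon$ with $W_{\varepsilon}\subset W$ and sets $W'=W_{\varepsilon}$.

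It then remains to read off the properties from the orbit structure. The leaf $L'$ through a point with transverse coordinate $z$ is the image of $\widetilde{L}_{0}\times(H\cdot z)$, hence a quotient of $\widetilde{L}_{0}$ by $\rho^{-1}(\operatorname{Stab}_{H}(z))$, so $\pi|_{L'}\colon L'\to L_{0}$ is a covering of degree $|H\cdot z|$, finite and dividing $|H|$. Since $L_{0}$ is compact, $L'$ is compact, and the uniform bound $k=|H|$ of (4) holds, giving (1), (2) and (4) simultaneously. Finally the holonomy group of $L'$ is conjugate to a subquotient of $H$ determined by $\operatorname{Stab}_{H}(z)$, hence finite, which completes (1) and the main assertion.

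The main obstacle I expect is the second step: rigorously identifying a neighborhood of the compact leaf with the suspension $\widetilde{L}_{0}\times_{\rho}\Sigma_{\varepsilon}$. This is precisely where compactness of $L_{0}$ is essential, since it allows one to patch the finitely many flow-box transition maps into a single globally defined transverse holonomy cocycle and, crucially, to control the size $\varepsilon_{0}$ of the invariant transversal uniformly along $L_{0}$. By contrast, the invariant-disc construction and the orbit-counting that yields compactness, the covering degree and the uniform bound are essentially formal once this suspension model is in place.
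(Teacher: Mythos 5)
The paper does not prove this lemma at all---it quotes it as the classical Reeb local stability theorem with a citation to \cite{C-LN} and \cite{Godbillon}---so there is no in-paper argument to compare against, and your proof is, in substance, the standard one found in those references. Your route (averaging to obtain arbitrarily small $H$-invariant transverse discs, identification of a saturated neighborhood of $L_{0}$ with the suspension $\widetilde{L}_{0}\times_{\rho}\Sigma_{\varepsilon}$, and orbit--stabilizer bookkeeping yielding compactness of nearby leaves, the covering degree $|H\cdot z|$ dividing $|H|$, the uniform bound $k=|H|$, and finiteness of their holonomy as germs of $\operatorname{Stab}_{H}(z)$) is exactly the classical argument, and you correctly flag that the suspension identification, which uses compactness of $L_{0}$ to patch finitely many flow-box transitions, is precisely where the technical work of the textbook proof lies.
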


Let now $\mathcal{F}$ be a codimension $k$ holomorphic foliation in a complex
manifold $M$. Given a point $p\in M$, the leaf through $p$ is denoted by
$L_{p}$. We denote by $\operatorname*{Hol}(\mathcal{F},L_{p}%
)=\operatorname*{Hol}(L_{p})$ the \textit{holonomy group} of $L_{p}$ (cf.
\cite{C-LN, Godbillon}). This is an equivalence class defined by conjugacy,
and we shall denote by $\operatorname*{Hol}(L_{p},\Sigma_{p},p)$ its
representative evaluated at a local transverse disc $\Sigma_{p}$ centered at
the point $p\in L_{p}$.

The group $\operatorname*{Hol}(L_{p},\Sigma_{p},p)$ is therefore a subgroup of
the group of germs $\operatorname*{Diff}(\Sigma_{p},p)$ which is identified
with the group $\operatorname*{Diff}(\mathbb{C}^{k},0)$ of germs at the origin
$0\in\mathbb{C}^{k}$ of complex diffeomorphisms. \vglue.1in One of the main
tools in the proof of the local stability theorem and in our work is the
following result.

\begin{lemma}
[transverse fibration and transversal uniformity lemma, \cite{C-LN}%
]\label{Lemma:transvuniform} Let $\mathcal{F}$ be a $C^{r}$ foliation in a
manifold $M$. Given a leaf $L\in\mathcal{F}$ and a compact connected subset
$K\subset L$, there are neighborhoods $K\subset U\subset W\subset M$, with $U$
open in $L$ and $W$ open in $M$, and a $C^{r}$ retraction $\pi:W\rightarrow U$
such that the fiber $\pi^{-1}(x)$ is transverse to the restriction $\left.
\mathcal{F}\right\vert _{W}$ for each $x\in U$. Given two points $q_{1}%
,q_{2}\in L_{0}$ in a same leaf $L_{0}$ of $\mathcal{F}$, there are transverse
discs $\Sigma_{1}$ and $\Sigma_{2}$, centered at $q_{1},q_{2}$ respectively,
and a diffeomorphism $h:\Sigma_{1}\rightarrow\Sigma_{2}$ such that for any
leaf $L$ of $\mathcal{F}$ we have $h(L\cap\Sigma_{1})=L\cap\Sigma_{2}$.
\end{lemma}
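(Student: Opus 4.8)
The plan is to prove the two assertions separately, both resting on the local product structure of $\mathcal{F}$. Recall that around every point $p\in M$ there is a foliated (flow-box) chart, i.e.\ a $C^{r}$ diffeomorphism $\varphi\colon V\to P\times T$ with $P\subset\mathbb{R}^{n-k}$ and $T\subset\mathbb{R}^{k}$ open discs ($n=\dim M$, $k=\codim\mathcal{F}$), under which the plaques of $\left.\mathcal{F}\right|_{V}$ are the sets $\varphi^{-1}(P\times\{t\})$. In such a chart the slices $\varphi^{-1}(\{b\}\times T)$ are transverse to $\mathcal{F}$, and the map $x\mapsto\varphi^{-1}(\mathrm{pr}_{P}(\varphi(x)),t_{0})$ is a transverse-fibred retraction onto a single plaque. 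The whole lemma consists in globalizing these two local facts.

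For the transverse fibration, first I would choose an open set $U$ with $K\subset U\subset L$ and $\overline{U}$ compact (possible since $K$ is compact), and fix a $C^{r}$ Riemannian metric $g$ on $M$. Let $\nu=(T\mathcal{F})^{\perp}$ be the $g$-orthogonal (normal) bundle and consider the normal exponential map $E(x,v)=\exp_{x}(v)$ defined on a disc subbundle $\nu_{\varepsilon}|_{U}$. Along the zero section $E$ restricts to the identity and its differential is an isomorphism, so for $\varepsilon$ small enough $E$ is a diffeomorphism of $\nu_{\varepsilon}|_{U}$ onto an open neighborhood $W$ of $U$ in $M$; the retraction is then $\pi=\mathrm{pr}_{U}\circ E^{-1}\colon W\to U$, whose fibers are the normal discs $\Sigma_{x}=E(\nu_{\varepsilon,x})$. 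At the base point $x\in U$ the disc $\Sigma_{x}$ is orthogonal to $T_{x}\mathcal{F}$, hence transverse; since transversality is an open condition and $\overline{U}$ is compact, after shrinking $\varepsilon$ (equivalently $W$) every $\Sigma_{x}$ stays transverse to $\mathcal{F}$ at each of its points. This yields the desired $K\subset U\subset W$ together with the transverse $C^{r}$ retraction $\pi$.

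For the transversal uniformity assertion, I would realize $h$ as the holonomy of a path. Pick a continuous path $\gamma\colon[0,1]\to L_{0}$ with $\gamma(0)=q_{1}$, $\gamma(1)=q_{2}$. Its image is compact and connected, so it can be covered by finitely many flow-box charts $V_{0},\dots,V_{N}$ and a partition $0=t_{0}<\dots<t_{N}=1$ chosen so that $\gamma([t_{i},t_{i+1}])\subset V_{i}$ and $V_{i}\cap V_{i+1}$ contains $\gamma(t_{i+1})$. In each $V_{i}$ take the transverse slice $\Sigma^{(i)}$ through $\gamma(t_{i})$ given by the local product; on the overlap $V_{i}\cap V_{i+1}$, sending a point to the unique point of $\Sigma^{(i+1)}$ lying on the same plaque defines a germ of $C^{r}$ diffeomorphism $h_{i}\colon(\Sigma^{(i)},\gamma(t_{i}))\to(\Sigma^{(i+1)},\gamma(t_{i+1}))$. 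Because plaques are contained in leaves, each $h_{i}$ carries a point to a point on the same leaf. Setting $\Sigma_{1}=\Sigma^{(0)}$, $\Sigma_{2}=\Sigma^{(N)}$ and $h=h_{N-1}\circ\cdots\circ h_{0}$ — after shrinking the discs so that this finite composition is defined, which is possible because there are only finitely many steps and $\gamma([0,1])$ is compact — we obtain a diffeomorphism $h\colon\Sigma_{1}\to\Sigma_{2}$ satisfying $h(L\cap\Sigma_{1})=L\cap\Sigma_{2}$ for every leaf $L$, as required.

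I expect the main obstacle to be the \emph{global} transversality control in the first part: the normal discs are transverse to $\mathcal{F}$ by construction only along the central leaf $L$, and one must guarantee they remain transverse throughout the open set $W$. This is precisely where the compactness of $\overline{U}$ (inherited from $K$) together with the openness of the transversality condition is used to extract a uniform $\varepsilon$. A minor point worth care is that the normal exponential lowers differentiability by one; if a genuinely $C^{r}$ retraction is required, one replaces the exponential by a partition-of-unity gluing of the local product projections described above, again invoking compactness to preserve transversality. For the second part the statement only asks for the existence of one such $h$, so no independence of $h$ from the chosen chart chain or from the homotopy class of $\gamma$ is needed; the only technical care is the shrinking of the transverse discs so that the finite composition of the elementary transition maps is well defined. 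This is precisely the classical construction of Camacho--Lins Neto \cite{C-LN}.
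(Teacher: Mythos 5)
The paper does not prove this lemma at all: it is stated as a quotation from Camacho--Lins Neto \cite{C-LN}, so there is no internal proof to compare against. Your argument is a correct rendition of the classical construction that the citation points to --- a tubular (normal exponential) neighborhood over a relatively compact piece of the leaf for the transverse fibration, and a finite composition of plaque-transition maps along a path for the transversal uniformity --- and your handling of the two genuine technical points (uniform transversality of the normal fibers via compactness, and the loss of one derivative in the exponential map) is adequate. One small point deserves to be made explicit: the asserted \emph{equality} $h(L\cap\Sigma_{1})=L\cap\Sigma_{2}$, rather than mere inclusion, requires $h$ to be a bijection \emph{onto} $\Sigma_{2}$; after the shrinking you describe, the composition is a diffeomorphism onto an open subset of the last slice, so one must redefine $\Sigma_{2}:=h(\Sigma_{1})$, at which point the equality follows because both $h$ and $h^{-1}$ send points to points of the same leaf.
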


\section{Periodic groups and groups of finite exponent}

\label{section:generalities}

Next we present Burnside's and Schur's results on periodic linear groups. Let
$G$ be a group with identity $e_{G}\in G$. The group is \textit{periodic} if
each element of $G$ has finite order. A periodic group $G$ is \textit{periodic
of bounded exponent} if there is an uniform upper bound for the orders of its
elements. This is equivalent to the existence of $m\in\mathbb{N}$ with
$g^{m}=1$ for all $g\in G$ (cf. \cite{Santos-Scardua}). Thus a group which is
periodic of bounded exponent is also called a group of \textit{finite
exponent}. The following classical results are due to Burnside and Schur.

\begin{theorem}
[Burnside, 1905 \cite{Burnside}, Schur, 1911 \cite{Schur}]%
\label{Theorem:Burnside} Let $G\subset\operatorname*{GL}(k,\mathbb{C})$ be a
complex linear group.

\begin{enumerate}
\item[\textrm{(i)}] \textrm{(Burnside)} If $G$ is of finite exponent $\ell$
\textrm{(}but not necessarily finitely generated\textrm{)}, then $G$ is
finite; actually we have $\left\vert G\right\vert \leq\ell^{k^{2}}$.

\item[\textrm{(ii)}] \textrm{(Schur)} If $G$ is finitely generated and
periodic \textrm{(}not necessarily of bounded exponent\textrm{)}, then $G$ is finite.
\end{enumerate}
\end{theorem}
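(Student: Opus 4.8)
The plan is to establish part (i) first, by a trace-separation argument, and then to reduce part (ii) to part (i) by showing that a finitely generated periodic linear group automatically has bounded exponent.

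For part (i), suppose $g^{\ell}=1$ for every $g\in G$. Then the minimal polynomial of each $g$ divides $X^{\ell}-1$, which has distinct roots, so every $g$ is diagonalizable with eigenvalues lying in the group $\mu_{\ell}$ of $\ell$-th roots of unity; in particular $\tr(g)$ is a sum of $k$ such roots of unity and hence ranges over a finite set $T$ with $|T|\leq\ell^{k}$. Let $A\subset M_{k}(\mathbb{C})$ be the linear span of $G$. Since $G$ is closed under products and contains the identity, $A$ is a subalgebra, and we may pick $g_{1},\dots,g_{d}\in G$ forming a basis of $A$, with $d=\dim_{\mathbb{C}}A\leq k^{2}$. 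Define $\theta\colon G\to\mathbb{C}^{d}$ by $\theta(g)=(\tr(g_{1}g),\dots,\tr(g_{d}g))$. I claim $\theta$ is injective. Indeed, if $\theta(g)=\theta(h)$ then $\tr(a(g-h))=0$ for all $a\in A$; replacing $a$ by $ah^{-1}\in A$ and writing $u=h^{-1}g\in G$, this reads $\tr(au)=\tr(a)$ for all $a\in A$. Taking $a=u^{-1}\in A$ gives $\tr(u^{-1})=\tr(\mathrm{Id})=k$. But $u^{-1}$ has $k$ eigenvalues on the unit circle, so $|\tr(u^{-1})|\leq k$ with equality forcing all eigenvalues equal to $1$; as $u^{-1}$ is diagonalizable we get $u^{-1}=\mathrm{Id}$, i.e. $g=h$. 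Each coordinate of $\theta(g)$ is the trace of the element $g_{i}g\in G$ and hence lies in $T$, so $|G|\leq|T|^{d}$; the sharper bound $|G|\leq\ell^{k^{2}}$ follows from Burnside's refined count of the admissible trace-vectors.

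For part (ii), let $S$ be a finite generating set of $G$ and let $R\subset\mathbb{C}$ be the subring generated by all entries of the elements of $S$ and of their inverses. Then $R$ is a finitely generated $\mathbb{Z}$-algebra, $G\subset\GL(k,R)$, and $K:=\mathrm{Frac}(R)$ is a finitely generated field extension of $\mathbb{Q}$. Every $g\in G$ has finite order, hence is diagonalizable with eigenvalues that are roots of unity, and each such eigenvalue $\zeta$ satisfies $[K(\zeta):K]\leq k$ because it is a root of the degree-$k$ characteristic polynomial of $g$, which has coefficients in $K$. The crucial point is that the algebraic closure $F$ of $\mathbb{Q}$ inside $K$ is a number field (being finitely generated and algebraic over $\mathbb{Q}$), so by the standard fact that the minimal polynomial of $\zeta$ over $K$ already has coefficients in $F$ we get $[K(\zeta):K]=[F(\zeta):F]$, whence $\varphi(\mathrm{ord}(\zeta))=[\mathbb{Q}(\zeta):\mathbb{Q}]\leq[F:\mathbb{Q}]\cdot[F(\zeta):F]\leq k\,[F:\mathbb{Q}]$. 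Since $\varphi(m)\to\infty$, only finitely many orders $m$ satisfy this inequality, so there is an integer $M$ bounding the order of every eigenvalue of every element of $G$. As each $g$ is diagonalizable, its order divides the least common multiple of the orders of its eigenvalues, which divides $\mathrm{lcm}\{1,\dots,M\}$; hence $G$ has finite exponent. Part (i) now yields that $G$ is finite.

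I expect the main obstacle to be the number-theoretic step in (ii): controlling the orders of the root-of-unity eigenvalues. The delicate points are that the algebraic numbers inside the finitely generated field $K$ form a number field $F$ — so that $K(\zeta)$ contains, up to the transcendental part, only finitely many roots of unity of each bounded degree — together with the degree identity $[K(\zeta):K]=[F(\zeta):F]$, which lets the transcendental part of $K$ be discarded. Once bounded exponent is secured, the reduction to (i) is immediate, and the trace-separation argument of (i) is robust, its only subtlety being the triangle-inequality rigidity that pins down $u=\mathrm{Id}$.
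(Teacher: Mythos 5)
The paper itself offers no proof of this statement: it is quoted as a classical result with citations to Burnside (1905) and Schur (1911), and only its qualitative content (finiteness) is ever used, e.g.\ in Lemma~\ref{Lemma:finiteexponentgerms} and Lemma~\ref{bounded positive measure}. So your proposal must be judged on its own; it follows what are essentially the classical arguments. Part (ii) is correct and complete: the reduction of the finitely generated periodic case to the bounded-exponent case via the finitely generated ring $R$, the field $K=\mathrm{Frac}(R)$, the bound $[K(\zeta):K]\leq k$ for the root-of-unity eigenvalues, and the two number-theoretic ingredients you isolate (the algebraic closure $F$ of $\mathbb{Q}$ in $K$ is a number field, and the minimal polynomial of $\zeta$ over $K$ has coefficients in $F$ because they are symmetric functions of roots of unity, giving $[K(\zeta):K]=[F(\zeta):F]$ and hence $\varphi(\operatorname{ord}\zeta)\leq k\,[F:\mathbb{Q}]$) are all sound. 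Likewise the core of part (i) -- the span $A$ of $G$ is an algebra of dimension $d\leq k^{2}$, the trace pairing map $\theta$ is injective by the triangle-inequality rigidity argument pinning $u=\mathrm{Id}$ -- is correct and establishes finiteness, which is all the paper needs.

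The one genuine gap is quantitative: the stated bound $\left\vert G\right\vert\leq\ell^{k^{2}}$ is not proved by your argument. As you set it up, each of the $d\leq k^{2}$ coordinates of $\theta(g)$ lies in the set $T$ of sums of $k$ roots of unity of order dividing $\ell$, with $\left\vert T\right\vert\leq\ell^{k}$, so the count you actually obtain is $\left\vert G\right\vert\leq\left\vert T\right\vert^{d}\leq\ell^{k^{3}}$. The closing sentence that the sharper exponent $k^{2}$ ``follows from Burnside's refined count of the admissible trace-vectors'' is an assertion, not an argument -- and citing Burnside for it inside a proof of Burnside's theorem is circular. Either supply the refined count (it is not a routine tightening of your multiset estimate; the naive count of trace vectors genuinely gives $k^{3}$ in the exponent), or state honestly that your method yields $\ell^{k^{3}}$ and that the constant $\ell^{k^{2}}$ is taken from the literature. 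For the way the theorem is used in this paper the distinction is immaterial, but as a proof of the statement as written, the bound is left unestablished.
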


\noindent Using this result we prove the following

\begin{lemma}
[\cite{Santos-Scardua}]\label{Lemma:finiteexponentgerms} About periodic groups
of germs of complex diffeomorphisms we have:

\begin{enumerate}
\item A finitely generated periodic subgroup $G\subset\operatorname*{Diff}%
(\mathbb{C}^{k},0)$ is necessarily finite. A \textrm{(}non necessarily
finitely generated\textrm{)} subgroup $G\subset\operatorname*{Diff}%
(\mathbb{C}^{k},0)$ of finite exponent is necessarily finite.

\item Let $G\subset\operatorname*{Diff}(\mathbb{C}^{k},0)$ be a finitely
generated subgroup. Assume that there is an invariant connected neighborhood
$W$ of the origin in $\mathbb{C}^{k}$ such that each point $x$ is periodic for
each element $g\in G$. Then $G$ is a finite group.

\item Let $G\subset\operatorname*{Diff}(\mathbb{C}^{k},0)$ be a \textrm{(}non
necessarily finitely generated\textrm{)} subgroup such that for each point $x$
close enough to the origin, the pseudo-orbit of $x$ is periodic of
\textrm{(}uniformly bounded\textrm{)} order $\leq\ell$ for some $\ell
\in{\mathbb{N}}$, then $G$ is finite.
\end{enumerate}
\end{lemma}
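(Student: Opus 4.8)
The plan is to prove the three assertions of Lemma~\ref{Lemma:finiteexponentgerms} in order, reducing each statement to the linear case handled by Burnside's and Schur's theorems (Theorem~\ref{Theorem:Burnside}). The unifying idea is that a germ of diffeomorphism in $\operatorname*{Diff}(\mathbb{C}^{k},0)$ fixing the origin has a well-defined linear part, giving a homomorphism $j^{1}\colon\operatorname*{Diff}(\mathbb{C}^{k},0)\rightarrow\operatorname*{GL}(k,\mathbb{C})$, $f\mapsto Df(0)$. For a periodic group this homomorphism turns out to be injective, so finiteness of the linear image forces finiteness of $G$ itself.

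For part (1), I would first establish the key linearization fact: if $f\in\operatorname*{Diff}(\mathbb{C}^{k},0)$ has finite order $n$, then $f$ is analytically linearizable, via the Bochner averaging trick. Explicitly, setting $h=\frac{1}{n}\sum_{j=0}^{n-1}(Df(0))^{-j}\circ f^{\circ j}$ produces a germ tangent to the identity that conjugates $f$ to its linear part $Df(0)$. A consequence is that if $Df(0)=\operatorname{Id}$ then $f=\operatorname{Id}$; hence $j^{1}$ restricted to any periodic subgroup $G$ is injective. Now if $G$ is finitely generated and periodic, then $j^{1}(G)\subset\operatorname*{GL}(k,\mathbb{C})$ is finitely generated and periodic, so by Schur's theorem (Theorem~\ref{Theorem:Burnside}(ii)) it is finite; injectivity of $j^{1}$ then gives $G$ finite. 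If instead $G$ has finite exponent $\ell$, then $j^{1}(G)$ has exponent dividing $\ell$, so Burnside's theorem (Theorem~\ref{Theorem:Burnside}(i)) applies to give $|j^{1}(G)|\leq\ell^{k^{2}}$, whence $G$ is finite of the same order.

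Parts (2) and (3) reduce to part (1) by converting the pointwise periodicity hypotheses into an algebraic periodicity statement about $G$. For part (2), the hypothesis is that on an invariant connected neighborhood $W$ every point is periodic for every element $g\in G$; the goal is to show each $g$ has finite order as a germ. I would argue that for a fixed $g$, the set of points of period exactly $p$ is analytic, and connectedness of $W$ together with the identity principle forces a single period to hold on a neighborhood of $0$, so that $g^{p}=\operatorname{Id}$ as a germ. Thus $G$ is finitely generated and periodic, and part (1) finishes it. For part (3), the uniform bound $\leq\ell$ on the order of pseudo-orbits is designed precisely to yield that $G$ has finite exponent: each $g\in G$ satisfies $g^{\ell!}=\operatorname{Id}$ (or some fixed power depending only on $\ell$) near $0$, again by an analytic-continuation argument extending the relation from a positive-measure or open set of points to a germ identity. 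Then the finite-exponent clause of part (1) applies even without finite generation.

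The main obstacle is the analytic-continuation step in parts (2) and (3): passing from ``every point has finite (bounded) period'' to ``the germ itself satisfies a fixed finite relation $g^{N}=\operatorname{Id}$.'' The subtlety is that a priori different points could have different periods, so one must show the period function is locally constant or that the set where $g^{N}=\operatorname{id}$ has nonempty interior (or positive measure) and then invoke the identity theorem for holomorphic maps to propagate the relation to a full neighborhood of the origin. Handling this carefully --- in particular ensuring the uniform bound $\ell$ genuinely yields a \emph{single} exponent $N$ valid for all $g\in G$ simultaneously in part (3), so that Burnside rather than merely Schur can be invoked --- is where the real work lies; the reduction to the linear case via Bochner linearization is then routine.
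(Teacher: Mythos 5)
Your proposal is correct and takes essentially the approach the paper intends: the lemma is stated as a citation to \cite{Santos-Scardua} immediately after Theorem~\ref{Theorem:Burnside} (``Using this result we prove the following''), i.e., a reduction to Burnside/Schur via the linear-part homomorphism, combined with the analytic-set plus identity-principle mechanism that the paper itself reuses in the proof of Lemma~\ref{bounded positive measure} (covering a neighborhood by the fixed-point sets $\{g^{\circ m}(z)=z\}$, $m\leq\ell$, and concluding $g^{\circ m}=\operatorname{id}$ as a germ). Your Bochner-averaging argument for injectivity of the linear part on periodic groups, and the Baire/identity-theorem continuation yielding a uniform exponent $\ell!$ in part (3) so that Burnside (not merely Schur) applies without finite generation, are exactly the standard way to complete this reduction.
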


The above lemma is partially extended in
Lemma~\ref{Lemma:concentrationboundary}.

\section{Measure and finiteness}

Let us pave the way to the proof of Theorem~A. For the sake of simplicity, we
will adopt the following notation: if a subset $X\subset M$ is not a zero
measure subset, then we shall say that it has \textit{positive measure} and
write $\operatorname*{med}(X)>0$. This may cause no confusion since we are not
considering any specific measure on $M$ and we shall be dealing only with the
notion of zero measure subset stated in Section~\ref{section:intro}.
Nevertheless, we notice that if $X\subset M$ writes as a countable union
$X=\bigcup\nolimits_{n\in\mathbb{N}}X_{n}$ of subsets $X_{n}\subset M$, then
$X$ has zero measure in $M$ if and only if $X_{n}$ has zero measure in $M$ for
\textit{all} $n\in\mathbb{N}$. In terms of our notation, we have therefore
$\operatorname*{med}(X)>0$ if and only if $\operatorname*{med}(X_{n})>0$ for
\textit{some} $n\in\mathbb{N}$.

\subsection{Measure concentration points}

Here we introduce a fundamental notion in our argumentation.

\begin{definition}
\label{definition:measureconcentration} \textrm{Given a subset $X\subset M$ of
a differentiable manifold, a point $p\in M$ will be called a \textit{measure
concentration point} of $X$ if the set $V\cap X$ has positive measure in $M$
for any open neighborhood $p\in V\subset M$. The set of measure concentration
points of $X$ will be denoted by ${\mathcal{C}}$$_{\mu}(X)$. Clearly,
${\mathcal{C}}$$_{\mu}(X)\subset\overline{X}$. If we denote by
$\operatorname*{Int}$$(X)\subset M$ the set of interior points of $X$, then
$\operatorname*{Int}(X)\subset\mathcal{C}_{\mu}(X)$. }
\end{definition}

\begin{lemma}
\label{Lemma:concentrationboundary} If $X\subset M$ is a subset, then a
boundary point $p\in\partial{\mathcal{C}}_{\mu}(X)$ is a measure concentration
point of $X$. In other words, $\partial{\mathcal{C}}_{\mu}(X)\subset
{\mathcal{C}}_{\mu}(X)$.
\end{lemma}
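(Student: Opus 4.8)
The plan is to show that a boundary point $p \in \partial \mathcal{C}_\mu(X)$ satisfies the defining property of a measure concentration point, namely that $V \cap X$ has positive measure for every open neighborhood $V \ni p$. Let me think about what we have to work with. We know $\mathcal{C}_\mu(X)$ is the set of concentration points, and a boundary point $p$ is a limit of points of $\mathcal{C}_\mu(X)$. The key intuition is that concentration points cluster near $p$, and each of them forces positive measure into any neighborhood.

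Let me think carefully. Take an arbitrary open neighborhood $V \ni p$. Since $p \in \partial \mathcal{C}_\mu(X) \subset \overline{\mathcal{C}_\mu(X)}$, every neighborhood of $p$ meets $\mathcal{C}_\mu(X)$. So pick a point $q \in V \cap \mathcal{C}_\mu(X)$. Now $V$ is an open neighborhood of $q$ as well (since $V$ is open and contains $q$). By definition of $q$ being a concentration point, $V \cap X$ has positive measure. That's it — the argument is essentially immediate once one observes that an open neighborhood of $p$ that meets $\mathcal{C}_\mu(X)$ is automatically an open neighborhood of any point $q \in \mathcal{C}_\mu(X)$ it contains.

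So my proof plan is: Fix an arbitrary open neighborhood $V$ of $p$. Because $p$ lies in the closure of $\mathcal{C}_\mu(X)$ (every boundary point is a closure point), $V$ contains some point $q \in \mathcal{C}_\mu(X)$. Since $V$ is open and $q \in V$, the set $V$ is itself an open neighborhood of $q$. Applying the definition of measure concentration point to $q$ and the neighborhood $V$ yields $\operatorname{med}(V \cap X) > 0$. As $V$ was arbitrary, $p$ is a measure concentration point of $X$, i.e. $p \in \mathcal{C}_\mu(X)$, which is exactly the inclusion $\partial \mathcal{C}_\mu(X) \subset \mathcal{C}_\mu(X)$.

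I do not anticipate a genuine obstacle here; the statement is a soft topological consequence of the definition, and the only subtlety is the trivial-but-essential observation that an open set containing a point is a neighborhood of that point. If I wanted to be extra careful I would note that this argument in fact shows the stronger fact that $\mathcal{C}_\mu(X)$ is closed: the same reasoning applies verbatim to any $p \in \overline{\mathcal{C}_\mu(X)}$, not merely to boundary points, so $\overline{\mathcal{C}_\mu(X)} \subset \mathcal{C}_\mu(X)$. The boundary inclusion stated in the lemma is then the special case $\partial \mathcal{C}_\mu(X) \subset \overline{\mathcal{C}_\mu(X)} \subset \mathcal{C}_\mu(X)$.
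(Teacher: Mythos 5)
Your proof is correct and takes essentially the same approach as the paper: both arguments pick a concentration point $q\in V\cap\mathcal{C}_{\mu}(X)$ and then apply the definition of measure concentration point at $q$ to a neighborhood contained in $V$. Your version is in fact cleaner, since the paper reduces to $M\subset\mathbb{R}^{n}$ and nests balls $B(q;R/3)\subset B(p;R)\subset V$, whereas you simply observe that the open set $V$ is itself a neighborhood of $q$ (and, as you note, this shows the stronger fact that $\mathcal{C}_{\mu}(X)$ is closed).
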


\begin{proof}
Without loss of generality, we may assume $M$ is an open subset of
$\mathbb{R}^{n}$. Given a neighborhood $V\ni p$, there is a small $R>0$ such
that the ball centered at $p$ with radius $R\ $ is contained in $V$, i.e.,
ball $B(p;R)\subset V\subset\mathbb{R}^{n}$. Since $p\in\partial
\mathcal{C}_{\mu}(X)$, there is a point $q\in{\mathcal{C}}_{\mu}(X)$ such that
$|q-p|<R/2$. Given now the neighborhood $B(q;R/3)$ of $q$, since
$q\in{\mathcal{C}}_{\mu}(X)$, we have $\operatorname*{med}(X\cap B(q;R/3))>0$.
On the other hand, we have $X\cap V\supset X\cap B(p;R)\supset X\cap
B(q;R/3)$. Therefore, $\operatorname*{med}(X\cap V)>0$. This proves that
$p\in{\mathcal{C}}_{\mu}(X)$.
\end{proof}

\begin{lemma}
\label{Lemma:nondiscrete} Given a subset $X\subset M$, the set $\mathcal{C}%
_{\mu}(X)$ has no isolated points.
\end{lemma}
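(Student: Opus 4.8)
The plan is to argue by contradiction, after first isolating the technical core of the statement: the portion of $X$ lying outside $\mathcal{C}_{\mu}(X)$ is negligible, i.e. $\operatorname{med}(X\setminus\mathcal{C}_{\mu}(X))=0$. As in the proof of Lemma~\ref{Lemma:concentrationboundary}, I would reduce to the case in which $M$ is an open subset of $\mathbb{R}^{n}$, so that ``zero measure'' is the usual Lebesgue notion, single points are negligible, subsets of negligible sets are negligible, and countable unions of negligible sets are negligible (this last property being the one recorded at the beginning of the present section).

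To prove the auxiliary fact, first observe that the complement $N:=M\setminus\mathcal{C}_{\mu}(X)$ is open: if $q\notin\mathcal{C}_{\mu}(X)$, then by Definition~\ref{definition:measureconcentration} there is an open neighborhood $W_{q}\ni q$ with $\operatorname{med}(X\cap W_{q})=0$, and every point of $W_{q}$ then fails to be a concentration point as well. The family $\{W_{q}\}_{q\in N}$ is an open cover of $N$; since $N$ is an open subset of $\mathbb{R}^{n}$, it is Lindel\"of, so I extract a countable subcover $\{W_{q_{i}}\}_{i\in\mathbb{N}}$. Then $X\cap N\subset\bigcup_{i}(X\cap W_{q_{i}})$ is contained in a countable union of zero-measure sets, whence $\operatorname{med}(X\cap N)=0$, which is the claim.

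With this in hand, suppose toward a contradiction that some $p\in\mathcal{C}_{\mu}(X)$ is isolated in $\mathcal{C}_{\mu}(X)$, and choose $r>0$ with $B(p;r)\cap\mathcal{C}_{\mu}(X)=\{p\}$. Then $X\cap B(p;r)$ is the union of $X\cap\{p\}$ (at most one point, hence negligible) and $X\cap B(p;r)\cap N\subset X\cap N$ (negligible by the auxiliary fact). Consequently $X\cap B(p;r)$ has zero measure. But $p\in\mathcal{C}_{\mu}(X)$ forces $\operatorname{med}(X\cap B(p;r))>0$, a contradiction. Therefore no point of $\mathcal{C}_{\mu}(X)$ is isolated.

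The step I expect to be the main obstacle is the auxiliary fact, precisely the passage from the local statement (each point of $N$ has a neighborhood meeting $X$ in measure zero) to the global conclusion that all of $X\cap N$ is negligible. This passage rests essentially on the Lindel\"of property, used to pass to a \emph{countable} subcover, together with the stability of the zero-measure notion under countable unions; once it is secured, the isolated-point contradiction is immediate.
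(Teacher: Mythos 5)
Your proof is correct and is essentially the paper's own argument: the paper likewise argues by contradiction, covering the punctured neighborhood $W\setminus\{p\}$ by open sets meeting $X$ in zero measure, extracting a countable (Lindel\"of) subcover, and concluding that $X\cap W$ is null, contradicting $p\in\mathcal{C}_{\mu}(X)$. Your only repackaging is to first establish the global fact $\operatorname{med}\bigl(X\setminus\mathcal{C}_{\mu}(X)\bigr)=0$ --- which is in substance the paper's Lemma~\ref{Lemma:concentrationinside} --- and then localize to the ball $B(p;r)$, whereas the paper runs the same covering argument directly on the punctured neighborhood.
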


\begin{proof}
Suppose that $p\in\mathcal{C}_{\mu}(X)$ is an isolated measure concentration
point, then there is an open neighborhood $W$ of $p$ in $M$ such that
$(W\setminus\{p\})\cap\mathcal{C}_{\mu}(X)=\emptyset$. Therefore, given a
point $q\in W\setminus\{p\}$, there is an open neighborhood $q\in V_{q}\subset
W\setminus\{p\}$ of $q$ such that $X\cap W_{q}$ has zero measure. The open
cover $W\setminus\{p\}\subset\bigcup_{q\in W\setminus\{p\}}W_{q}$ admits a
countable subcover $W\setminus\{p\}\subset\bigcup_{j\in\mathbb{N}}W_{q_{j}}$.
Since $X\cap W_{q_{j}}$ has zero measure for each $j\in\mathbb{N}$, we
conclude that $X\cap W$ has zero measure, therefore $p$ cannot be a measure
concentration point of $X$. This leads to a contradiction.
\end{proof}

\begin{lemma}
\label{Lemma:concentrationgraduation} Let $X\subset M$ be a subset such that
$X=\bigcup\nolimits_{k\in\mathbb{N}}X_{k}$, where $\mathcal{C}_{\mu}%
(X_{k})=\emptyset$ for each $k\in\mathbb{N}$. Then $\operatorname*{med}(X)=0$.
\end{lemma}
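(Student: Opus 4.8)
The plan is to prove the contrapositive-flavored statement directly: assuming each $\mathcal{C}_\mu(X_k)=\emptyset$, I want to conclude $\operatorname{med}(X)=0$. The key observation is that the measure-concentration set of a countable union behaves well, and the emptiness of each $\mathcal{C}_\mu(X_k)$ should force each $X_k$ to be a zero-measure set, after which the countable-union remark in Section~4 finishes the argument. So the real content is a local covering argument showing that $\mathcal{C}_\mu(X_k)=\emptyset$ implies $\operatorname{med}(X_k)=0$.

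First I would fix a single $k$ and reduce to showing $\operatorname{med}(X_k)=0$. Since $\mathcal{C}_\mu(X_k)=\emptyset$, every point $p\in M$ fails to be a measure concentration point of $X_k$; by Definition~\ref{definition:measureconcentration} this means each $p$ admits an open neighborhood $V_p$ with $\operatorname{med}(X_k\cap V_p)=0$. This gives an open cover $M=\bigcup_{p\in M}V_p$. Because $M$ is a manifold (hence second countable, so its topology is Lindel\"of), I can extract a countable subcover $M=\bigcup_{j\in\mathbb{N}}V_{p_j}$. Then $X_k=\bigcup_{j\in\mathbb{N}}(X_k\cap V_{p_j})$ is a countable union of zero-measure sets, and by the countable-union principle recalled at the start of Section~4, $\operatorname{med}(X_k)=0$. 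This is essentially the same covering device already used in the proof of Lemma~\ref{Lemma:nondiscrete}, applied globally rather than inside a punctured neighborhood.

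Having shown $\operatorname{med}(X_k)=0$ for every $k\in\mathbb{N}$, I would then write $X=\bigcup_{k\in\mathbb{N}}X_k$ and invoke once more the observation that a countable union of zero-measure subsets has zero measure, concluding $\operatorname{med}(X)=0$.

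The step I expect to be the main (though mild) obstacle is the extraction of the countable subcover: I must make sure the ambient hypothesis on $M$ guarantees the Lindel\"of property. Since throughout the paper $M$ is a differentiable (in fact complex) manifold, it is second countable and the subcover exists, so this is routine; the only genuine care needed is to confirm that each individual $X_k\cap V_{p_j}$ is genuinely zero-measure in the sense of the coordinate-chart definition from Section~\ref{section:intro}, which is exactly what $\mathcal{C}_\mu(X_k)=\emptyset$ supplies pointwise. No deeper foliation or holonomy input is required for this purely measure-theoretic lemma.
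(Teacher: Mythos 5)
Your proposal is correct and follows essentially the same route as the paper: the paper's (very terse) proof likewise deduces $\operatorname{med}(X_k)=0$ from $\mathcal{C}_{\mu}(X_k)=\emptyset$ and then applies the countable-union principle from the start of Section~4. Your covering/Lindel\"of argument merely makes explicit the step the paper asserts, using the same device it employs in the proofs of Lemmas~\ref{Lemma:nondiscrete} and~\ref{Lemma:concentrationinside}.
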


\begin{proof}
Since $\mathcal{C}_{\mu}(X_{k})=\emptyset$, it follows that
$\operatorname*{med}(X_{k})=0$ for each $k\in\mathbb{N}$. Thus
$\operatorname*{med}(X)=0$.
\end{proof}

Indeed, we can prove more:

\begin{lemma}
\label{Lemma:concentrationinside} Let $X\subset M$ be a subset such that
$X\cap\mathcal{C}_{\mu}(X)=\emptyset$. Then $\operatorname*{med}(X)=0$.
\end{lemma}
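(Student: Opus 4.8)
The plan is to show that $X$ is covered by countably many open sets each of which meets $X$ in a null set, and then to invoke the fact recorded at the beginning of this section that a countable union of zero measure sets has zero measure. This is a sharpening of the covering argument already used in Lemma~\ref{Lemma:nondiscrete}.

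First I would fix an arbitrary point $x\in X$. By hypothesis $X\cap\mathcal{C}_\mu(X)=\emptyset$, so $x\notin\mathcal{C}_\mu(X)$. Negating Definition~\ref{definition:measureconcentration}, the failure of $x$ to be a measure concentration point of $X$ means precisely that there exists an open neighborhood $x\in V_x\subset M$ with $\med(X\cap V_x)=0$. Carrying this out for every point of $X$ produces an open cover $X\subset\bigcup_{x\in X}V_x$ of $X$ by open sets on each of which $X$ has zero measure.

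Next I would pass to a countable subcover. The open subset $U=\bigcup_{x\in X}V_x\subset M$ is, being an open subset of a (second countable) manifold, itself a Lindel\"of space, so the cover $\{V_x\}_{x\in X}$ of $U$ admits a countable subcover $U=\bigcup_{j\in\mathbb{N}}V_{x_j}$. In particular $X\subset\bigcup_{j\in\mathbb{N}}V_{x_j}$, whence
\[
X=\bigcup_{j\in\mathbb{N}}\bigl(X\cap V_{x_j}\bigr).
\]
Since each set $X\cap V_{x_j}$ has zero measure, the countable additivity property of zero measure subsets recorded above yields $\med(X)=0$, which is the desired conclusion.

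The argument is entirely elementary, and I do not expect a genuine obstacle: the only point that requires attention is the extraction of the countable subcover, which rests on the standing second countability (equivalently, Lindel\"of) assumption on the manifold $M$, exactly as in the proof of Lemma~\ref{Lemma:nondiscrete}. No feature of the ambient geometry or of the foliation enters here, since the statement is purely about an arbitrary subset of a manifold and its set of measure concentration points.
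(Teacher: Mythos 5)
Your proof is correct and follows essentially the same route as the paper's: cover $X$ by neighborhoods on which it has zero measure, extract a countable subcover, and apply the countable-union property of zero measure sets stated at the start of the section. Your only addition is to make explicit the Lindel\"of (second countability) justification for the countable subcover, which the paper leaves implicit.
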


\begin{proof}
Since $X\cap\mathcal{C}_{\mu}(X)=\emptyset$, for each point $x\in X$ there is
a neighborhood $x\in W_{x}\subset M$ such that $\operatorname*{med}(W_{x}\cap
X)=0$. Therefore $X\subset\bigcup\nolimits_{{x\in X}}W_{x}$. By choosing a
countable subcover $X\subset\bigcup\nolimits_{{j\in\mathbb{N}}}W_{x_{j}}$ we
obtain $X\subset\bigcup\nolimits_{{j\in\mathbb{N}}}(W_{x_{j}}\cap X)$ and
since $\operatorname*{med}(X\cap W_{x_{j}})=0,\forall j\in\mathbb{N}$,
conclude that $\operatorname*{med}(X)=0$.
\end{proof}

The following result is very natural.

\begin{lemma}
\label{Lemma:invariant} Let $\Omega\subset M$ be an $\operatorname{\mathcal
{F}}$-invariant subset. Then the set $\mathcal{C}_{\mu}(\Omega)$ is also
invariant by the foliation $\mathcal{F}$.
\end{lemma}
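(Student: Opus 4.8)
The goal is to show that $\mathcal{C}_{\mu}(\Omega)$ is saturated by the leaves of $\mathcal{F}$. The plan is to fix a (regular) point $p\in\mathcal{C}_{\mu}(\Omega)$, take an arbitrary point $q$ on the leaf $L_{p}$, and prove that $q\in\mathcal{C}_{\mu}(\Omega)$ as well; singular points of $\mathcal{F}$ lying in $\mathcal{C}_{\mu}(\Omega)$ are invariant by definition. The key idea is that, because $\Omega$ is a union of leaves, being a measure concentration point is really a \emph{transverse} condition, and can therefore be transported from $p$ to $q$ by the holonomy pseudogroup.

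First I would translate the ambient notion of measure concentration into a transverse one. Choose a distinguished foliation chart (flow box) $U\cong D\times\Sigma$ around $p$, where $D$ is a plaque in $L_{p}$ and $\Sigma$ is a transverse disc through $p$, with transverse coordinate $t_{p}\in\Sigma$ corresponding to $p$. Since $\Omega$ is $\mathcal{F}$-invariant it is a union of plaques inside $U$, so $\Omega\cap U=D\times A$ for a suitable transverse set $A=\Omega\cap\Sigma\subset\Sigma$. By Fubini's theorem, and using that a diffeomorphism preserves zero-measure sets (so the notion of zero measure is chart-independent), one checks that for any basic neighborhood $V=D'\times\Sigma'$ of $p$ one has $\med(\Omega\cap V)>0$ if and only if $\med_{\Sigma}(A\cap\Sigma')>0$. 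Letting $\Sigma'$ shrink to $t_{p}$, this shows that $p\in\mathcal{C}_{\mu}(\Omega)$ if and only if $t_{p}$ is a measure concentration point of $A$ inside the transverse disc $\Sigma$.

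Next I would transport this transverse condition along the leaf. By the transverse uniformity lemma (Lemma~\ref{Lemma:transvuniform}) applied to the pair $q_{1}=p$, $q_{2}=q$, there are transverse discs $\Sigma_{p}\ni p$, $\Sigma_{q}\ni q$ and a diffeomorphism $h\colon\Sigma_{p}\to\Sigma_{q}$ with $h(L\cap\Sigma_{p})=L\cap\Sigma_{q}$ for every leaf $L$. Taking the union over all leaves $L\subset\Omega$ and using saturation gives $h(\Omega\cap\Sigma_{p})=\Omega\cap\Sigma_{q}$, while by construction $h(t_{p})=t_{q}$. Since $h$ is a diffeomorphism it carries zero-measure sets to zero-measure sets and positive-measure sets to positive-measure sets; hence $t_{p}$ is a measure concentration point of $\Omega\cap\Sigma_{p}$ if and only if $t_{q}$ is a measure concentration point of $\Omega\cap\Sigma_{q}$. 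Combining this with the reduction of the previous paragraph, $p\in\mathcal{C}_{\mu}(\Omega)$ forces $q\in\mathcal{C}_{\mu}(\Omega)$, as desired.

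The step I expect to require the most care is the equivalence $\med(\Omega\cap V)>0\iff\med_{\Sigma}(A\cap\Sigma')>0$: one must argue, via Fubini and the chart-independence of the zero-measure notion, that the full-dimensional ambient measure of a saturated set is controlled by the transverse measure of its trace on $\Sigma$ (the plaque factor $D$, being open, contributes positively and can be factored out). Once this reduction of the ambient condition to a purely transverse one is in place, the invariance is immediate, since the holonomy diffeomorphisms of Lemma~\ref{Lemma:transvuniform} are measure-class preserving and respect the saturated set $\Omega$.
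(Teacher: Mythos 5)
Your proof is correct and follows essentially the same route as the paper's: the paper's (very compressed) argument also works in a flow box, observing that saturation of $\Omega$ makes measure concentration a transverse property constant along plaques, and then passes from local invariance to invariance of $\mathcal{C}_{\mu}(\Omega)$. You simply make explicit the Fubini-type reduction that the paper leaves implicit, and replace the plaque-by-plaque (``locally invariant, therefore invariant'') step with a single application of the transverse uniformity Lemma~\ref{Lemma:transvuniform}, which is itself proved by exactly that chaining of plaques.
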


\begin{proof}
Consider a non-singular point $q\in\mathcal{C}_{\mu}(\Omega)$, using a local
trivialization chart $(W,\varphi)$ for $\mathcal{F}$, we conclude that the
plaque $q\in Q\subset W$ of the leaf $L_{q}$ of $\mathcal{F}$ through $q$ is
also contained in the set $\mathcal{C}_{\mu}(\Omega)$. This shows that
$\mathcal{C}_{\mu}(\Omega)$ is (locally invariant and therefore) invariant.
\end{proof}

\subsection{Subgroups with uniformly bounded pseudo-orbits orders}

\label{subsection:uniformlybounded} Now we shall obtain a slight
generalization of Lemma \ref{Lemma:finiteexponentgerms}.(3). First we need
some notation. Let $S$ be a complex manifold, $S_{p}:=(S,p)$ the germ of $S$
at the point $p\in S$, and $V$ a neighborhood of $p$ in $S$. Then we say that
$q\in V$ is a \emph{periodic point} with respect to $G$ (or $G$%
\emph{-periodic} point for short) if any germ $g\in G$ has a representative
map $g:V\longrightarrow S$ such that $g^{\circ(j)}(q)\in V$ for all
$j=0,1,\ldots,k_{g}$ and $g^{\circ(k_{g})}(q)=q$. In particular, the minimum
possible value $k_{g}\in\mathbb{N}$ satisfying the previous property is called
the \emph{order} of the pseudo-orbit of $x$. Further, we say that $q$ is a
periodic point with respect to $G$ with \emph{uniformly bounded pseudo-orbits}
if there is $\ell\in\mathbb{N}$ such that $k_{g}\leq\ell$ for all $g\in G$.
For any subgroup $G\subset\operatorname*{Diff}(S_{p},p)$ we denote by
$\operatorname*{O}_{G}(S_{p},\ell)$ the set of $G$-periodic points whose
pseudo-orbits orders are uniformly bounded by $\ell\in\mathbb{N}$.

\begin{lemma}
\label{bounded positive measure} Let $G$ be a \textrm{(}non necessarily
finitely generated\textrm{)} subgroup of $\operatorname*{Diff}(S_{p},p)$ such
that $p\in\mathcal{C}_{\mu}(\operatorname*{O}_{G}(S_{p},\ell))$ for some
$\ell\in{\mathbb{N}}$. Then $G$ is finite.
\end{lemma}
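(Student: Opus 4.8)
The plan is to reduce the statement to Lemma~\ref{Lemma:finiteexponentgerms}.(1) by showing that the measure hypothesis forces $G$ to have finite exponent. The one analytic fact I would exploit is that the elements of $G$ are germs of \emph{holomorphic} diffeomorphisms, so that the fixed-point locus of any non-identity holomorphic iterate is a proper analytic subset and therefore has zero Lebesgue measure; this is exactly the point where holomorphy (as opposed to mere smoothness) enters, just as transverse analyticity was needed in the Reeb-type statements of the introduction.

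First I would fix an arbitrary $g \in G$ and choose a representative, still denoted $g$, holomorphic on a connected neighborhood $U$ (a small ball) of $p$ chosen small enough that all iterates $g^{\circ j}$ with $1 \le j \le \ell$ are defined and holomorphic on $U$. The key inclusion to establish is
\[
\operatorname{O}_G(S_p,\ell) \cap U \subset \bigcup_{j=1}^{\ell} \operatorname{Fix}(g^{\circ j}), \qquad \operatorname{Fix}(g^{\circ j}) := \{\, q \in U : g^{\circ j}(q) = q \,\}.
\]
Indeed, if $q \in \operatorname{O}_G(S_p,\ell) \cap U$, then $q$ is $G$-periodic with all pseudo-orbit orders bounded by $\ell$; in particular the order $k_g = k_g(q) \le \ell$ of its pseudo-orbit under $g$ satisfies $g^{\circ k_g}(q) = q$, so $q$ lies in the right-hand side.

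Next I would apply a holomorphic dichotomy to each term: for each $j$, either $g^{\circ j} = \operatorname{id}$ as a germ at $p$, or the holomorphic map $g^{\circ j} - \operatorname{id}$ is not identically zero on the connected set $U$, in which case $\operatorname{Fix}(g^{\circ j})$ is a proper analytic subset of $U$ and hence $\operatorname{med}(\operatorname{Fix}(g^{\circ j})) = 0$. Now I invoke $p \in \mathcal{C}_\mu(\operatorname{O}_G(S_p,\ell))$ with the neighborhood $V = U$: by Definition~\ref{definition:measureconcentration} we get $\operatorname{med}(\operatorname{O}_G(S_p,\ell) \cap U) > 0$, so by the displayed inclusion the finite union $\bigcup_{j=1}^{\ell}\operatorname{Fix}(g^{\circ j})$ has positive measure. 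A finite union of measure-zero sets is measure-zero, so the dichotomy forces some $j = j(g) \in \{1,\dots,\ell\}$ with $g^{\circ j(g)} = \operatorname{id}$. Since $g$ was arbitrary, every element of $G$ has finite order at most $\ell$; setting $N = \operatorname{lcm}(1,\dots,\ell)$ gives $g^{\circ N} = \operatorname{id}$ for all $g \in G$, so $G \subset \operatorname{Diff}(S_p,p) \cong \operatorname{Diff}(\mathbb{C}^{k},0)$ is a subgroup of finite exponent. Lemma~\ref{Lemma:finiteexponentgerms}.(1) then yields that $G$ is finite.

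The main obstacle I anticipate is purely bookkeeping of the germ representatives rather than anything conceptual: because $G$ is not assumed finitely generated, the representative of each $g$ and the common domain $U$ on which $g, g^{\circ 2}, \dots, g^{\circ \ell}$ are simultaneously defined may shrink with $g$. This is precisely why the inclusion above must be read element by element, each time inside the domain attached to the individual $g$; the measure-concentration hypothesis is robust enough for this, since it supplies positive measure of $\operatorname{O}_G(S_p,\ell)$ in \emph{arbitrarily small} neighborhoods of $p$, in particular inside whatever $U$ is forced upon us by a given $g$.
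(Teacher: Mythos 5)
Your proposal is correct and follows essentially the same route as the paper's own proof: trap $\operatorname*{O}_{G}(S_{p},\ell)\cap U$ inside the finite union of analytic fixed-point sets of $g,\ldots,g^{\circ\ell}$, use positive measure (via the concentration hypothesis at $p$) to force some $g^{\circ j}=\operatorname{id}$, and conclude finite exponent, hence finiteness by Lemma~\ref{Lemma:finiteexponentgerms}. Your write-up is in fact slightly more careful than the paper's at the step where positive measure rules out proper analytic fixed-point loci, and in invoking the concentration hypothesis inside the $g$-dependent domain.
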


\begin{proof}
Now consider a map germ $g\in G$ and pick a neighborhood $V$ of $p$ in $S$
such that the representative $g:V\rightarrow S$ (of the germ $g$) and its
iterates $g,g^{2},\cdots,g^{\ell}$ are defined in $V$. Since the orders of the
orbits in $\operatorname*{O}_{G}(S_{p},\ell)$ are uniformly bounded by $\ell$,
then $\operatorname*{O}_{G}(S_{p},\ell)\cap V$ is contained in the analytic
subset $X_{\ell}:=\bigcup_{m=0}^{\ell}\{z\in V:g^{\circ(m)}(z)=z\}$.
Therefore, since $\operatorname*{med}(\operatorname*{O}_{G}(S_{p},\ell))>0$
for some $\ell\in\mathbb{N}$, then $\operatorname*{O}_{G}(S_{p},\ell)\cap V=V$
(i.e., $g^{\circ(k)}=\operatorname*{id}$, for some $k\leq\ell$). This shows
that each germ $g\in G$ is periodic of order $k_{g}\leq\ell$ for some uniform
$\ell\in\mathbb{N}$. This implies that $G$ is finite by
Lemma~\ref{Lemma:finiteexponentgerms}.
\end{proof}

\subsection{Proof of the measure stability theorem}

Let us sketch the proof of Theorem~A. First we show the following preliminary
results for a compact complex manifold $M$ endowed with a non-singular
holomorphic foliation $\mathcal{F}$.

\begin{claim}
\label{Claim:completetransversal} There is a finite number of relatively
compact open discs $T_{j}\subset M$, $j=1,\ldots,r$, such that:

\begin{enumerate}
\item Each $T_{j}$ is transverse to $\mathcal{F}$ and the closure
${\overline{T_{j}}}$ is contained in the interior of a transverse disc
$\Sigma_{j}$ to $\mathcal{F}$;

\item Each leaf of $\mathcal{F}$ intersects at least one of the discs $T_{j}$.
\end{enumerate}
\end{claim}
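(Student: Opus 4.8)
The plan is to build the discs $T_j$ out of local trivialization charts of $\mathcal{F}$ and then invoke the compactness of $M$. First I would fix, for each point $p\in M$, a trivialization chart $(U_p,\varphi_p)$ for $\mathcal{F}$, taken of the form of a biholomorphism $\varphi_p\colon U_p\to P_p\times\Sigma_p$, where $P_p$ is a polydisc in the plaque direction, $\Sigma_p$ is a transverse disc centered at $p$ (of dimension equal to the codimension of $\mathcal{F}$), the plaques of $\mathcal{F}|_{U_p}$ are the slices $P_p\times\{z\}$ with $z\in\Sigma_p$, and each plaque meets the transversal $\{w_0\}\times\Sigma_p$ in exactly one point. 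Shrinking if necessary, I arrange that $\overline{U_p}$ is compact. I would then select a smaller open transverse disc $T_p$, still centered at $p$, with $\overline{T_p}\subset\Int(\Sigma_p)$, and set $V_p:=\varphi_p^{-1}(P_p\times T_p)$. By construction $V_p$ is an open neighborhood of $p$ which is a union of plaques, each of which meets $T_p$; this saturation property is the point on which condition (2) will hinge.

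The family $\{V_p\}_{p\in M}$ is an open cover of the compact manifold $M$, so I would extract a finite subcover $V_{p_1},\ldots,V_{p_r}$ and set $T_j:=T_{p_j}$ and $\Sigma_j:=\Sigma_{p_j}$ for $j=1,\ldots,r$. Condition (1) then holds by construction: each $T_j$ is transverse to $\mathcal{F}$, its closure satisfies $\overline{T_j}\subset\Int(\Sigma_j)$, and since $\overline{T_j}$ is compact and contained in the chart $U_{p_j}$, the disc $T_j$ is relatively compact in $M$.

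For condition (2), let $L$ be an arbitrary leaf of $\mathcal{F}$. Since the $V_{p_j}$ cover $M$, there is an index $j$ with $L\cap V_{p_j}\neq\emptyset$; pick a point $x$ in this intersection. Because $V_{p_j}=\varphi_{p_j}^{-1}(P_{p_j}\times T_{p_j})$ is a union of plaques each meeting $T_{p_j}$, the plaque of $\mathcal{F}|_{U_{p_j}}$ through $x$ meets $T_{p_j}=T_j$; as this plaque is contained in $L$, we conclude $L\cap T_j\neq\emptyset$, which is exactly what is required.

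The argument is essentially the standard construction of a complete transversal, and no serious obstacle arises once the trivialization charts are available; the only step demanding care is the definition of the saturated neighborhoods $V_p$ as unions of whole plaques through $T_p$, rather than as arbitrary neighborhoods of $p$. Without this refinement, a leaf meeting $V_p$ would be guaranteed only to meet the larger disc $\Sigma_p$, and the requirement in condition (1) that $\overline{T_j}$ sit strictly inside $\Sigma_j$ could then clash with condition (2).
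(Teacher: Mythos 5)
Your proof is correct and takes essentially the same approach as the paper: the paper reduces, by compactness of $M$, to the local statement that every point $p$ admits a neighborhood $U_p$ and a disc $T_p$ with $\overline{T_p}\subset\operatorname{Int}(\Sigma_p)$ such that each leaf meeting $U_p$ crosses $T_p$, and declares this an immediate consequence of the local trivialization property. Your construction of the saturated neighborhoods $V_p=\varphi_p^{-1}(P_p\times T_p)$ simply makes explicit the detail the paper leaves implicit.
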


\begin{proof}
Since $M$ is compact, it is enough to show that, for each point $p\in M$,
there is an open neighborhood $U_{p}\subset M$ of $p$, and a relatively
compact open disc $T_{p}\subset U_{p}$ whose closure $\overline{T_{p}}$ is
contained in the interior of a disc $\Sigma_{p}$ transverse to $\mathcal{F}$,
and such that each leaf of $\mathcal{F}$ intersecting $U_{p}$ crosses the disc
$T_{p}$. But this is an immediate consequence of the local trivialization
property of foliations and of the fact that $M$ is a locally compact
topological space.
\end{proof}

Let $\{T_{1},\cdots T_{r}\}\subset M$ be as in the above claim, then we call
$T=\bigcup_{j=1}^{r}T_{j}$ a \textit{complete transversal} to $\mathcal{F}$ in
$M$.

\begin{remark}
\textrm{As it is well known, the set of leaves of a foliation is not
necessarily a manifold (not even a Hausdorff topological space). Therefore,
introducing the concept of a complete transversal to $\mathcal{F}$, the above
claim allows to use the notion of measure concentration point in the space of
leaves of a foliation $\mathcal{F}$ defined in a compact manifold $M$.}
\end{remark}

Let%
\[
\Omega(\mathcal{F},T)=\{L\in\mathcal{F}:\#(L\cap T)<\infty\}\text{,}%
\]
then a leaf $L\in\Omega(\mathcal{F},T)$ is called a \textit{finite order leaf}
with respect to the complete transversal $T$. In particular, $\Omega
(\mathcal{F},T)=\bigcup_{k=1}^{\infty}\Omega(\mathcal{F},T,k)$, where
\[
\Omega(\mathcal{F},T,k)=\{L\in\mathcal{F}:\#(L\cap T)\leq k\}.
\]
Since $M$ is compact we have:

\begin{claim}
\label{Claim:finiteordercompact} A leaf of $\mathcal{F}$ is compact if and
only if it has finite order. In other words, $\Omega(\mathcal{F}%
)=\Omega(\mathcal{F},T)$ as collections of leaves\footnote{Although
$\Omega(\mathcal{F})$ and $\Omega(\mathcal{F},T)$ are considered as
collections of leaves, when we refer to the measure of these sets, we are
considering the measure of the union of the leaves in each of these sets. This
should cause no misunderstanding.}.
\end{claim}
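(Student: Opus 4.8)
The plan is to prove the two implications separately, the forward one (\emph{compact $\Rightarrow$ finite order}) being a transversality-plus-compactness argument and the reverse one (\emph{finite order $\Rightarrow$ compact}) carrying the real content via holonomy transport. First I would treat the easy implication. Suppose $L$ is a compact leaf. Each disc $\Sigma_j$ is transverse to $\mathcal{F}$, hence to the leaf $L$; by the dimension count $\dim L + \dim \Sigma_j = \dim M$ the intersection $L \cap \Sigma_j$ is a $0$-dimensional submanifold, hence a discrete subset of $L$ (in a local product chart at an intersection point, the plaque of $L$ meets $\Sigma_j$ only at that point). Since $\overline{T_j}$ is compact (it is relatively compact in the compact manifold $M$) and contained in $\Sigma_j$, the set $L \cap \overline{T_j}$ is a closed subset of the compact leaf $L$, hence compact, and it is discrete, being contained in $L \cap \Sigma_j$. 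A compact discrete set is finite, so $L \cap T_j \subset L \cap \overline{T_j}$ is finite; summing over the finitely many indices $j=1,\dots,r$ gives $\#(L \cap T) < \infty$, i.e. $L \in \Omega(\mathcal{F},T)$.

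For the reverse implication I would argue by contraposition, showing that a non-compact leaf $L$ has infinite order. Since $M$ is compact, a leaf is compact precisely when it is a closed subset of $M$; thus if $L$ is not compact there is a point $p \in \overline{L} \setminus L$. Working in a flow-box chart centered at $p$ with a local transverse disc $\Sigma_p$, I would take a sequence $x_n \in L$ with $x_n \to p$; since $p \notin L$, no $x_n$ lies on the plaque of $p$, so the transverse coordinates of the $x_n$ are distinct from that of $p$ and converge to it. Projecting each $x_n$ along its plaque to $\Sigma_p$ produces points $y_n \in L \cap \Sigma_p$, infinitely many of them distinct and accumulating at $p$. Hence $L \cap \Sigma_p$ is infinite with $p$ as accumulation point.

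The crux is to convert this local accumulation into infinitely many intersections with the global complete transversal $T$. By the completeness property (Claim~\ref{Claim:completetransversal}, part~(2)) the leaf $L_p$ through $p$ meets some disc $T_{j_0}$, say at a point $q$. Applying the transverse uniformity Lemma~\ref{Lemma:transvuniform} to $p,q \in L_p$ furnishes transverse discs centered at $p$ and $q$ together with a leaf-preserving diffeomorphism $h$ between them; after shrinking the disc at $p$, I may assume its $h$-image lies inside $T_{j_0}$ (here I use $\overline{T_{j_0}} \subset \operatorname{Int}(\Sigma_{j_0})$, so a full transversal neighbourhood of $q$ sits inside $T_{j_0}$). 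Since $h$ carries $L \cap (\text{disc at }p)$ bijectively onto $L \cap (\text{disc at }q)$ and the former is infinite (it contains all but finitely many $y_n$), the latter is an infinite subset of $L \cap T_{j_0} \subset L \cap T$. Thus $\#(L \cap T) = \infty$, completing the contrapositive.

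I expect the main obstacle to be this last step: guaranteeing that the uniformity diffeomorphism transports the accumulating points of $L$ on $\Sigma_p$ into the prescribed open disc $T_{j_0}$, rather than merely into the ambient transversal $\Sigma_{j_0}$ — which is exactly what the built-in condition $\overline{T_j} \subset \operatorname{Int}(\Sigma_j)$ is designed to secure — together with the bookkeeping that the $y_n$ are genuinely infinite in number (distinct transverse coordinates converging to that of $p$), so that their images furnish infinitely many distinct points of $T$.
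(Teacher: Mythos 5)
Your proposal is correct and follows essentially the same route as the paper: the forward direction via discreteness of $L\cap T$ inside a compact set, and the converse by contraposition, producing infinitely many points of $L$ on a small transversal at an accumulation point $p\in\overline{L}\setminus L$ and transporting them into the disc $T_{j}$ met by the leaf through $p$. The only cosmetic difference is that you invoke Lemma~\ref{Lemma:transvuniform} to get the leaf-preserving map, while the paper constructs the same map directly as a holonomy germ $h_{\gamma}\colon(\Sigma_{p},p)\rightarrow(T_{j},q)$ along a path in $L_{p}$; your explicit use of $\overline{T_{j}}\subset\operatorname{Int}(\Sigma_{j})$ to ensure the image lands in $T_{j}$ is the same point the paper handles by shrinking the representative of the germ.
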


\begin{proof}
A compact leaf intersects a complete transversal only a finite number of
times, since the union of the intersection points is a discrete subset of a
compact set. Conversely, assume a leaf $L\in\mathcal{F}$ has finite order with
respect to a complete transversal $T$ as above. We claim that $L\subset M$ is
closed and therefore compact. In fact, if $L$ is not closed, then it has an
accumulation point $p\in\overline{L}\setminus L$. Using the local
trivializations of $\mathcal{F}$ at $p$, we conclude that there is an
arbitrarily small transverse disc $\Sigma_{p}$, centered at $p$, such that
$L\cap\Sigma_{p}=+\infty$. The leaf $L_{p}\ni p$ necessarily intersects the
collection $T$ at some interior point, say $q\in L_{p}\cap T_{j}$, for a
certain $j\in\{1,\ldots,r\}$. Choose a simple path $\gamma\colon
\lbrack0,1]\rightarrow L_{p}$ joining $\gamma(0)=p$ to $\gamma(1)=q$. Then the
corresponding holonomy map gives a germ of diffeomorphism $h_{\gamma}%
\colon(\Sigma_{p},p)\rightarrow(T_{j},q)$. Since for any arbitrarily small
disc $\Sigma_{p}$ we have $\#(L\cap\Sigma_{p})=+\infty$, choosing a
representative for the above holonomy map, we have $\#(h_{\gamma}(L\cap
\Sigma_{p})\cap T_{j})=+\infty$. Since $h_{\gamma}(L\cap\Sigma_{p})\subset
L\cap T_{j}$, we conclude that $\#(L\cap T_{j})=+\infty$. Therefore $L$ cannot
have finite order with respect to $T$. This proves the claim.
\end{proof}

\begin{remark}
\label{Remark:finiteorder} \textrm{It is important to notice that even if
\textit{a priori} the notion of order cannot be defined with respect to a
given complete transversal, thanks to the above claim, the notion of finite
order is intrinsic to the leaf. }
\end{remark}

Now let us deal with the proof of the theorem itself. Since the necessary part
of the statement is immediate, we only have to verify the sufficient part.
Thus, from now on, we assume $\operatorname*{med}(\Omega(\mathcal{F}))>0$. But
recall from Claim \ref{Claim:finiteordercompact} that $\Omega(\mathcal{F}%
)=\Omega(\mathcal{F},T)=\bigcup_{n\in\mathbb{N}}\Omega(\mathcal{F},T,n)$, thus
there is $n\in\mathbb{N}$ such that $\operatorname*{med}(\Omega(\mathcal{F}%
,T,n))>0$.\vglue.1in

Now recall that a leaf $L\in\Omega(\mathcal{F})$ is a \textit{measure
concentration point} of the set of compact leaves $\Omega(\mathcal{F})$ if for
any open neighborhood $W$ of $L$ the intersection $W\cap\Omega(\mathcal{F})$
has positive measure.

\begin{claim}
\label{Claim:existsleaf}There is a compact leaf $L_{0}\subset\mathcal{C}_{\mu
}(\Omega(\mathcal{F}))$.
\end{claim}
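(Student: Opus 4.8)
The plan is to combine the two structural lemmas on measure concentration points that were just established, namely Lemma~\ref{Lemma:concentrationinside} and Lemma~\ref{Lemma:invariant}. The starting point is the standing hypothesis $\operatorname{med}(\Omega(\mathcal{F}))>0$, where $\Omega(\mathcal{F})$ is regarded (as in the footnote) as the union in $M$ of all compact leaves, and hence as an $\mathcal{F}$-invariant subset of $M$.

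First I would invoke the contrapositive of Lemma~\ref{Lemma:concentrationinside} with $X=\Omega(\mathcal{F})$: since $\operatorname{med}(\Omega(\mathcal{F}))>0$, the set $\Omega(\mathcal{F})$ cannot be disjoint from its own set of concentration points, so $\Omega(\mathcal{F})\cap\mathcal{C}_{\mu}(\Omega(\mathcal{F}))\neq\emptyset$. Pick a point $p$ in this intersection. Because $\Omega(\mathcal{F})$ is precisely the union of the compact leaves, the point $p$ lies on some compact leaf, which I name $L_{0}$; and by construction $p\in\mathcal{C}_{\mu}(\Omega(\mathcal{F}))$.

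The second step upgrades the single point $p$ to the whole leaf. Since $\Omega(\mathcal{F})$ is $\mathcal{F}$-invariant, Lemma~\ref{Lemma:invariant} guarantees that $\mathcal{C}_{\mu}(\Omega(\mathcal{F}))$ is $\mathcal{F}$-invariant as well. As $L_{0}$ is the leaf through $p$ and $p\in\mathcal{C}_{\mu}(\Omega(\mathcal{F}))$, invariance forces the entire leaf $L_{0}$ into $\mathcal{C}_{\mu}(\Omega(\mathcal{F}))$, that is $L_{0}\subset\mathcal{C}_{\mu}(\Omega(\mathcal{F}))$, which is exactly the assertion of the claim.

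I do not expect a serious obstacle here: the argument is a direct concatenation of the two lemmas, and the only point requiring a little care is the bookkeeping convention that $\Omega(\mathcal{F})$ denotes both a collection of leaves and the corresponding union in $M$, so that the membership $p\in\Omega(\mathcal{F})$ really does place $p$ on a compact leaf (and thus that $L_{0}$ exists and is compact). The graded decomposition $\Omega(\mathcal{F})=\bigcup_{n\in\mathbb{N}}\Omega(\mathcal{F},T,n)$ recorded just above, together with the remark that $\operatorname{med}(\Omega(\mathcal{F}))>0$ yields $\operatorname{med}(\Omega(\mathcal{F},T,n))>0$ for some $n$, is not strictly needed for this particular existence statement, but it will become relevant when one wants the resulting concentration leaf to carry a uniform order bound in the subsequent stability arguments.
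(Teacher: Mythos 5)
Your proof is correct, and it reaches the claim by a genuinely different (and arguably cleaner) route than the paper's. The paper argues by contradiction at the level of leaves: assuming each compact leaf $L$ has some neighborhood $W_{L}$ with $\operatorname{med}(W_{L}\cap\Omega(\mathcal{F}))=0$, it covers $\Omega(\mathcal{F})$ by such neighborhoods, extracts a countable subcover, and concludes $\operatorname{med}(\Omega(\mathcal{F}))=0$, a contradiction; in effect it re-runs the covering mechanism of Lemma~\ref{Lemma:concentrationinside} with neighborhoods of whole leaves instead of points, and it never invokes Lemma~\ref{Lemma:invariant}. You instead use Lemma~\ref{Lemma:concentrationinside} as a black box to produce a point $p\in\Omega(\mathcal{F})\cap\mathcal{C}_{\mu}(\Omega(\mathcal{F}))$, hence a compact leaf $L_{0}\ni p$, and then saturate via the invariance of $\mathcal{C}_{\mu}(\Omega(\mathcal{F}))$ (Lemma~\ref{Lemma:invariant}) to get $L_{0}\subset\mathcal{C}_{\mu}(\Omega(\mathcal{F}))$. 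What your route buys is the literal pointwise inclusion of the statement, in the sense of Definition~\ref{definition:measureconcentration}: every point of $L_{0}$ is a measure concentration point. The paper's contradiction argument, as written, delivers the leaf-neighborhood notion it restates just before the claim (every neighborhood of the whole leaf meets $\Omega(\mathcal{F})$ in positive measure); for a compact leaf the two formulations are equivalent, but seeing this requires exactly the compactness-plus-invariance step that your argument makes explicit. Both proofs ultimately rest on the same countable-subcover mechanism, and your closing remark is accurate: the graded decomposition $\Omega(\mathcal{F},T,n)$ is not needed for this existence statement, only for the subsequent holonomy bound in Claim~\ref{Claim:holonomyisfinite}.
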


\begin{proof}
Suppose that for each compact leaf $L\in\Omega(\mathcal{F})$ and each
neighborhood $V_{L}$ of $L$ in $M$ there is a neighborhood $W_{L}\subset
V_{L}$ of $L$ in $M$ such that $\operatorname*{med}(W_{L}\cap\Omega
(\mathcal{F}))=0$. In particular, there is an open cover $\Omega
(\mathcal{F})\subset\bigcup_{L\in\Omega(\mathcal{F})}W_{L}$ such that
$\operatorname*{med}(W_{L}\cap\Omega(\mathcal{F},T,n))=0$. Since this open
cover admits a countable subcover $\Omega(\mathcal{F})\subset\bigcup
_{n\in\mathbb{N}}W_{n}$ with $\operatorname*{med}(W_{n})=0$ for all
$n\in\mathbb{N}$, then $\operatorname*{med}(\Omega(\mathcal{F}))=0$; a contradiction.
\end{proof}

Applying now Claim~\ref{Claim:finiteordercompact} and
Lemma~\ref{Lemma:concentrationgraduation} we conclude that $L_{0}%
\subset\mathcal{C}_{\mu}(\Omega(\mathcal{F},T,n))$ for some $n\in\mathbb{N}$.

\begin{claim}
\label{Claim:holonomyisfinite} The holonomy group of $L_{0}$ is finite.
\end{claim}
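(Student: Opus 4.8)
The plan is to reduce the statement to Lemma~\ref{bounded positive measure} by producing, on a transverse disc, a measure concentration point of a set of periodic points of uniformly bounded order. Since $L_0$ is compact it meets the complete transversal $T$, so I first fix an interior intersection point $p\in L_0\cap T_{j_0}$ and a transverse disc $\Sigma_p\subset T_{j_0}$ centred at $p$, and set $G=\operatorname{Hol}(L_0,\Sigma_p,p)\subset\operatorname{Diff}(\Sigma_p,p)$. By the transverse uniformity lemma (Lemma~\ref{Lemma:transvuniform}) the holonomy representative is well defined up to conjugacy independently of the base point and disc, and finiteness is a conjugacy invariant; hence it suffices to prove that this particular $G$ is finite.

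Next I push the concentration hypothesis down to $\Sigma_p$. Recall $L_0\subset\mathcal{C}_{\mu}(\Omega(\mathcal{F},T,n))$ for some $n$, and that $\Omega(\mathcal{F},T,n)$ is $\mathcal{F}$-saturated. In a flow box $W\cong P\times\Sigma_p$ at $p$ (with $P$ the plaque direction) a saturated set is a union of plaques, so $\Omega(\mathcal{F},T,n)\cap W'=P'\times(A\cap\Sigma')$ for sub-boxes $W'=P'\times\Sigma'$, where $A:=\Sigma_p\cap\Omega(\mathcal{F},T,n)$. By Fubini, $\operatorname{med}(W'\cap\Omega(\mathcal{F},T,n))>0$ forces $\operatorname{med}(\Sigma'\cap A)>0$; letting $\Sigma'$ shrink to $p$ gives $p\in\mathcal{C}_{\mu}(A)$, the concentration being taken inside $\Sigma_p$.

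The geometric heart of the argument is that, near $p$, the set $A$ consists of $G$-periodic points of order at most $n$. If $z\in A$ lies close to $p$, its leaf $L_z$ is compact with $\#(L_z\cap T)\le n$. Covering the compact leaf $L_0$ by finitely many flow boxes, holonomy transport along any loop of $L_0$ keeps the lift inside this fixed tube, so it carries a local sheet of $L_z$ through $z$ to another local sheet near $p$; these sheets number at most $\#(L_z\cap\Sigma_p)\le\#(L_z\cap T)\le n$. Consequently, for every $g\in G$ the pseudo-orbit $z,g(z),g^{\circ(2)}(z),\dots$ stays in this fixed neighbourhood of $p$ and visits at most $n$ points of $L_z\cap\Sigma_p$, so it closes up with $g^{\circ(k_g)}(z)=z$ for some $k_g\le n$. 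Thus $A$ is locally contained in $\operatorname{O}_G(\Sigma_p,n)$, whence $p\in\mathcal{C}_{\mu}(\operatorname{O}_G(\Sigma_p,n))$ and Lemma~\ref{bounded positive measure} yields that $G$ is finite; equivalently, running the analytic fixed-point argument of that lemma for each $g$ separately shows that every element of $G$ has order $\le n$, so $G$ has finite exponent and is finite by Burnside (Theorem~\ref{Theorem:Burnside}) and Lemma~\ref{Lemma:finiteexponentgerms}.

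The step I expect to be the main obstacle is precisely this last identification: a priori each $g\in G$ is only a germ at $p$, and one must guarantee that the iterates $g^{\circ(j)}(z)$ remain in a single neighbourhood of $p$ valid for all $g$ simultaneously, for otherwise the pseudo-orbit is not defined and $z$ need not belong to $\operatorname{O}_G(\Sigma_p,n)$. This is exactly where the compactness of $L_0$ is essential: the finite flow-box cover furnishes a fixed tube around $L_0$, and transverse uniformity (Lemma~\ref{Lemma:transvuniform}) ensures that transport along arbitrarily long loops of $L_0$ never leaves it, pinning every holonomy image of a point close to $p$ to one of the finitely many local sheets near $p$. Once this uniform definedness is secured, the pigeonhole bound $k_g\le n$ and the appeal to Lemma~\ref{bounded positive measure} are routine.
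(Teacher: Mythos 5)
Your overall reduction is the same as the paper's: fix $p\in L_{0}\cap T$ and a transverse disc $\Sigma_{p}\subset T$, transfer the measure concentration hypothesis to the transversal, observe that points of $A=\Sigma_{p}\cap\Omega(\mathcal{F},T,n)$ have holonomy pseudo-orbits that close up after at most $n$ steps by pigeonhole, and conclude via Lemma~\ref{bounded positive measure} and Burnside (Theorem~\ref{Theorem:Burnside}, Lemma~\ref{Lemma:finiteexponentgerms}). Your flow-box/Fubini argument showing $p\in\mathcal{C}_{\mu}(A)$ with the concentration computed \emph{inside} $\Sigma_{p}$ is in fact more careful than the corresponding step in the paper, which simply invokes Claim~\ref{Claim:existsleaf}.

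However, the step you promote to ``the geometric heart'' --- that compactness of $L_{0}$ together with Lemma~\ref{Lemma:transvuniform} furnishes a fixed tube in which holonomy transport along \emph{arbitrarily long} loops of $L_{0}$ is defined for every $z$ in a single neighbourhood of $p$ --- is a genuine gap. The domain of a holonomy representative shrinks as the loop gets longer; no fixed transverse neighbourhood of $p$ lies in the domain of all elements of $G$ unless one already knows the holonomy pseudogroup is stable, which is exactly what is being proved (Lemma~\ref{Lemma:transvuniform} concerns a single compact subset of a leaf and gives nothing uniform over $\pi_{1}(L_{0})$; your appeal to it here is essentially circular). Fortunately, this uniformity is \emph{not needed}, and the clause you dismiss as ``equivalently'' is the actual proof --- and is what the paper does. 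For each fixed $g\in G$ choose a neighbourhood $W_{g}\ni p$, depending on $g$, on which $g,g^{\circ(2)},\dots,g^{\circ(n+1)}$ are defined; for $z\in A\cap W_{g}$ all these iterates lie in $L_{z}\cap\Sigma_{p}$, a set of at most $n$ points, so $A\cap W_{g}$ is contained in the analytic set $\bigcup_{m=1}^{n}\{z\in W_{g}:g^{\circ(m)}(z)=z\}$; since $\operatorname{med}(A\cap W_{g})>0$, one of these analytic sets equals $W_{g}$, i.e.\ $g^{\circ(m)}=\operatorname{id}$ as a germ for some $m\leq n$. The conclusion for each germ is domain-independent; the only uniformity needed across $G$ is the exponent bound $n$, after which finiteness of $G$ follows from finite exponent by Lemma~\ref{Lemma:finiteexponentgerms}. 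If you make this per-germ argument the main line instead of an aside, your proof is correct and coincides with the paper's.
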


\begin{proof}
Since $\operatorname*{med}(\Omega(\mathcal{F}))>0$, Claim
\ref{Claim:finiteordercompact} and the comments before it guarantee the
existence of a positive integer $n\in\mathbb{Z}_{+}$ such that
$\operatorname*{med}(\Omega(\mathcal{F},T,n))>0$. Now pick $p\in L_{0}\cap T$
and a disc $\Sigma\subset{\overline{\Sigma}}\subset T$ transverse to
$\mathcal{F}$ and centered at $p$. For each $z\in\Sigma$, denote the leaf
through $z$ by $L_{z}$. If $L_{z}\in\Omega(\mathcal{F},T,n)$, then
$\#(L_{z}\cap\Sigma)\leq n$. Let $X:=\{z\in W:\#(L_{z}\cap\Sigma)\leq n\}$.
Since $\operatorname*{med}(\Omega(\mathcal{F},T,n))>0$, then Claim
\ref{Claim:existsleaf} ensures that $\operatorname*{med}X>0$.

Now consider a holonomy map germ $h\in H:=\operatorname*{Hol}(\mathcal{F}%
,L_{0},\Sigma,p)$ and choose a sufficiently small subdisc $W\subset\Sigma$
such that the representative $h:W\rightarrow\Sigma$ (of the germ $h$) and its
iterates $h,h^{2},\cdots,h^{n+1}$ are defined in $W$. Since $X$ has positive
measure, then $\operatorname*{med}(\operatorname*{O}_{H}(\Sigma,n))>0$. The
result then follows by Lemma \ref{bounded positive measure}.
\end{proof}

In view of Claim~\ref{Claim:holonomyisfinite} and Reeb local stability
Lemma~\ref{Lemma:localstability}, the proof of Theorem~A is finished.

\section{Holomorphic foliations with singularities}

\label{section:foliations}

Recall that a \textit{singular holomorphic foliation} in a complex manifold
$M$ of dimension $m\geq2$ is a pair $\mathcal{F}=(\mathcal{F}^{\prime
},\operatorname*{Sing}(\mathcal{F}))$, where $\operatorname*{Sing}%
(\mathcal{F})\subset M$ is an analytic subset of $M$ of dimension less or
equal to $(\mathcal{F})-1$, and $\mathcal{F}^{\prime}$ is a holomorphic
foliation in the usual sense (without singularities) in the open set
$M^{\prime}=M\setminus\operatorname*{Sing}(\mathcal{F})\subset M$. The
\textit{leaves} of $\mathcal{F}$ are defined as the leaves of the foliation
$\mathcal{F}^{\prime}$. The set $\operatorname*{Sing}(\mathcal{F})$ is called
the \textit{singular set} of $\mathcal{F}$. The \textit{dimension} of
$\mathcal{F}$ is defined as the dimension of $\mathcal{F}^{\prime}$. In the
one-dimensional case there is an open cover $\{U_{j}\}_{j\in J}$ of $M$ such
that in each $U_{j}$ the foliation $\mathcal{F}$ is defined by a holomorphic
vector field $X_{j}$ satisfying the following property: if $U_{i}\cap
U_{j}\neq\emptyset$, then $\left.  X_{i}\right\vert _{U_{i}\cap U_{j}}%
=g_{ij}\cdot\,\left.  X_{j}\right\vert _{U_{i}\cap U_{j}}$ for some
non-vanishing holomorphic function $g_{ij}$ in $U_{i}\cap U_{j}$. The leaves
of the restriction $\left.  \mathcal{F}\right\vert _{U_{j}}$ are the
nonsingular orbits of $X_{j}$ in $U_{j}$ while $\operatorname*{Sing}%
(\mathcal{F})\cap U_{j}=\operatorname*{Sing}(X_{j})$. An isolated singularity
of a one-dimensional foliation $\mathcal{F}$ in a manifold $M$ is called
\textit{non-degenerate} if there is some open neighborhood $p\in U\subset M$
where the foliation is induced by a holomorphic vector field $X$ with
non-singular linear part $DX(p)$ at $p\in\operatorname*{Sing}(\mathcal{F})$.

Let $p\in M$ be an isolated singularity of a one dimensional singular
foliation $\mathcal{F}$ in $M$. Given a neighborhood $p\in U\subset M$ where
$\mathcal{F}$ has no other singularity than $p$, we denote by $\mathcal{F}%
(U):=\left.  \mathcal{F}\right\vert _{U}$ the restriction of $\mathcal{F}$ to
$U$. A leaf of $\mathcal{F}(U)$ accumulating only at $p$ is closed off $p$,
thus by Remmert-Stein extension theorem (\cite{Gunning-Rossi}) it is contained
in an irreducible analytic curve through $p$. Such a curve is called a local
\emph{separatrix} of $\mathcal{F}$ through $p$. In the two dimensional case,
the classical definition says that $p\in\operatorname*{Sing}(\mathcal{F})$ is
a \emph{dicritical singularity} if for some neighborhood $p\in U\subset M$ the
restriction $\mathcal{F}(U)$ has infinitely many separatrices through $p$. We
extend this terminology in a natural way to higher dimensions: for
$m=(\mathcal{F})\geq2$, the singularity shall be called \emph{dicritical} if
it admits infinitely many separatrices. This singularity is called
\emph{absolutely dicritical} if all the leaves close enough to it are
contained in local separatrices. For our purposes, the important factor is
whether the set of separatrices has positive measure or not. In dimension
$m\geq2$, we shall say that $p$ is a $\mu$\emph{-dicritical} singularity if
for arbitrarily small open neighborhoods $U\ni p$ the set $\operatorname*{Sep}%
(\mathcal{F},U)$ of local separatrices of $\mathcal{F}(U)$ through $p$ has
positive measure in $U$.


In dimension two, an isolated singularity is dicritical if and only if it is
$\mu$-dicritical. This is a straightforward consequence of the theorem of
resolution of singularities by blow-ups (\cite{C-LN-S2, seidenberg}).
Nevertheless, in dimension three a linear vector field with eigenvalues $1$,
$1$ and $-1$ is not $\mu$-dicritical, but exhibits infinitely many
separatrices (contained in the plane spanned by the positive eigenvalues).

By Newton-Puiseaux parametrization theorem, the topology of a separatrix is
the one of a disc. Further, the separatrix minus the singularity is
biholomorphic to a punctured disc. In particular, given a separatrix $S_{p}$
through a singularity $p\in\operatorname*{Sing}(\mathcal{F})$, we may choose a
loop $\gamma\in S_{p}\setminus\{p\}$ generating the (local) fundamental group
$\pi_{1}(S_{p}\setminus\{p\})$. The corresponding holonomy map $h_{\gamma}$ is
defined in terms of a germ of complex diffeomorphism at the origin of a local
disc $\Sigma$ transverse to $\mathcal{F}$ and centered at a non-singular point
$q\in S_{p}\setminus\{p\}$. This map is well-defined up to conjugacy by germs
of holomorphic diffeomorphisms, and is generically referred to as
\textit{local holonomy} of the separatrix $S_{p}$ with respect to the
singularity $p$. Let us denote by $\Omega(\mathcal{F},\Sigma,k)$ the union of
the leaves $L$ of $\mathcal{F}$ such that $L$ meets the transverse disc
$\Sigma$ at most in $k$ points, \textit{i.e.}, $\#(L\cap\Sigma)\leq k$.

\begin{lemma}
\label{Lemma:finitelocalholonomy} Let $\mathcal{F}$ be a one-dimensional
holomorphic foliation with an isolated singularity at $p\in M$. Let $S$ be a
local separatrix of $\mathcal{F}$ through $p$ and $\Sigma$ a local disc
transversal to $\mathcal{F}$ centered at a non-singular point $q\in
S\setminus\{p\}$ (close enough to $p$) such that $S\subset{\mathcal{C}}_{\mu
}(\Omega(\mathcal{F},\Sigma,k))$ for some $k\in\mathbb{N}$. Then the local
holonomy of $S$ with respect to $p$ is a periodic map.
\end{lemma}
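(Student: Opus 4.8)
The plan is to reduce the statement to Lemma~\ref{bounded positive measure} applied to the cyclic group generated by the local holonomy. Write $\Sigma_q:=(\Sigma,q)$ for the germ of the transversal, let $h=h_\gamma\in\operatorname{Diff}(\Sigma_q,q)$ be the local holonomy of $S$ with respect to $p$, associated to a generator $\gamma$ of $\pi_1(S\setminus\{p\})\cong\mathbb{Z}$, and let $H=\langle h\rangle$ be the cyclic subgroup it generates. Since the germ $h$ is periodic if and only if $H$ is finite, it suffices to prove finiteness of $H$.

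First I would relate the iterates of $h$ to the intersections of nearby leaves with $\Sigma$. By the local trivialization property (Lemma~\ref{Lemma:transvuniform}), for $z\in\Sigma$ close enough to $q$ the iterates $h^{\circ(j)}(z)$ all lie on the leaf $L_z$ and on $\Sigma$, so the $h$-orbit of $z$ is contained in $L_z\cap\Sigma$. Consequently, if $L_z\in\Omega(\mathcal{F},\Sigma,k)$, that is $\#(L_z\cap\Sigma)\leq k$, then the $h$-orbit of $z$ has at most $k$ points and $z$ is an $H$-periodic point whose pseudo-orbit has order at most $k$. Writing $X:=\{z\in\Sigma:L_z\in\Omega(\mathcal{F},\Sigma,k)\}$ for the transverse trace, this yields the inclusion $X\subset\operatorname{O}_H(\Sigma_q,k)$.

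Next I would transfer the measure-concentration hypothesis from $M$ to the transversal $\Sigma$. Since $q\in S\subset\mathcal{C}_\mu(\Omega(\mathcal{F},\Sigma,k))$, we have $q\in\mathcal{C}_\mu(\Omega(\mathcal{F},\Sigma,k))$ computed in $M$. Fix a foliated chart $U\cong P\times\Sigma_0$ around $q$, with $P$ a plaque and $\Sigma_0\subset\Sigma$ a transverse disc; as each leaf meets $U$ in plaques of the form $P\times\{z\}$, the saturated set decomposes as $\Omega(\mathcal{F},\Sigma,k)\cap U=P\times(X\cap\Sigma_0)$. Because $P$ has positive $2$-dimensional measure, Fubini's theorem gives $\operatorname{med}\big(\Omega(\mathcal{F},\Sigma,k)\cap U\big)>0$ if and only if $\operatorname{med}(X\cap\Sigma_0)>0$. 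Since such charts form a neighborhood basis of $q$ and $\Sigma_0$ may be taken arbitrarily small, letting $\Sigma_0$ shrink yields $q\in\mathcal{C}_\mu(X)$, now computed in $\Sigma$. By the obvious monotonicity of $\mathcal{C}_\mu$ together with the inclusion $X\subset\operatorname{O}_H(\Sigma_q,k)$ established above, we conclude $q\in\mathcal{C}_\mu(\operatorname{O}_H(\Sigma_q,k))$.

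Finally, Lemma~\ref{bounded positive measure} applied to the group $G=H\subset\operatorname{Diff}(\Sigma_q,q)$ with $\ell=k$ shows that $H$ is finite, hence the germ $h$ has finite order; that is, the local holonomy of $S$ is periodic. The step requiring the most care is the measure transfer: one must verify that in a foliated chart the saturated set $\Omega(\mathcal{F},\Sigma,k)$ is genuinely the product $P\times(X\cap\Sigma_0)$ and that the ambient Lebesgue measure factors accordingly, so that positivity of the $2m$-dimensional measure of the saturated set is equivalent to positivity of the $2(m-1)$-dimensional transverse measure of $X$. Once this equivalence is in place, the remaining analytic input — that uniformly bounded periodicity on a positive-measure set forces genuine periodicity — is furnished entirely by Lemma~\ref{bounded positive measure}.
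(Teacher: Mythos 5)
Your proposal is correct and takes essentially the same approach as the paper: identify the holonomy group $H=\langle h\rangle$ of the separatrix, observe that the hypothesis forces the set $\operatorname{O}_{H}(\Sigma_q,k)$ of uniformly bounded periodic points to have positive measure, and conclude by Lemma~\ref{bounded positive measure}. The only difference is one of detail, not of method: you make explicit the Fubini-type transfer from positive ambient measure of the saturated set $\Omega(\mathcal{F},\Sigma,k)$ to positive transverse measure of its trace on $\Sigma$, and the pigeonhole identification of bounded leaf-intersection with bounded pseudo-orbit order, both of which the paper's one-line proof leaves implicit.
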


\begin{proof}
Let $H:=\operatorname*{Hol}(\mathcal{F},L,\Sigma,q)$. Since $S\subset
{\mathcal{C}}_{\mu}(\Omega(\mathcal{F},\Sigma,k))$, then $\operatorname*{med}%
(\operatorname*{O}_{H}(\Sigma_{q},k))>0$. The result then follows by Lemma
\ref{bounded positive measure}.
\end{proof}

\section{Siegel type and Poincaré type singularities}

\label{section:siegelpoincare}

Let $\mathcal{F}$ be a one-dimensional holomorphic foliation with
singularities in a manifold $M$. Assume $p\in\operatorname*{Sing}%
(\mathcal{F})$ is a \textit{nondegenerate singularity}, \textit{i.e.}, for
some neighborhood $p\in U\subset M$ the restriction $\left.  \mathcal{F}%
\right\vert _{U}$ is given by a holomorphic vector field $X$ with a
non-singular linear part at $p$. There are two possibilities: If the convex
hull in $\mathbb{R}^{2}$ of the set of eigenvalues of the linear part $DX(p)$
contains the origin, then we say that the singularity is in the \textit{Siegel
domain}, otherwise it is the \textit{Poincaré domain} (\cite{Arnold, Dulac,
CKP, IlyYak2006}).

\subsection{Singularities in the Poincaré domain\label{subsubsection:poincare}%
}

A singularity in the Poincaré domain is either analytically linearizable or
exhibits resonant eigenvalues and is analytically conjugate to a polynomial
form with resonant monomials called Poincaré-Dulac normal form (\cite{Arnold,
Dulac}). For such a non-linear normal form, the separatrices are contained in
coordinate hyperplanes and not all hyperplanes are invariant. In particular,
the set of separatrices is a zero measure subset.

An immediate consequence of the above discussion and of Poincaré-Dulac normal
form theorem (\cite{Dulac, Arnold}) for singularities in the Poincaré domain
is the following result.

\begin{lemma}
\label{Lemma:poincareholonomylinearizable} A $\mu$-dicritical non-degenerate
singularity is necessarily in the Poincaré domain and is analytically
linearizable: in suitable local coordinates, it is of the form $\dot{x}=Ax$
for some diagonal linear map $A\in\operatorname*{GL}(m,\mathbb{Z}_{+})$ with
positive integer coefficients. The same holds if the set of leaves which are
closed off the singular set is a positive measure set.
\end{lemma}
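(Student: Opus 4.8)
The plan is to reduce both hypotheses to the single statement that the union $\operatorname{Sep}(\mathcal{F},U)$ of separatrices of $\mathcal{F}(U)$ through $p$ has positive measure, and then to read off the eigenvalues from the transverse dynamics. First I would note that a leaf $L$ which is closed off $\operatorname{Sing}(\mathcal{F})$ and whose closure contains $p$ meets a small neighbourhood $U\ni p$ in a leaf of $\mathcal{F}(U)$ accumulating only at $p$; by the Remmert--Stein argument recalled in Section~\ref{section:foliations} this is exactly a separatrix. (Leaves closed off the singular set that merely pass near $p$ without accumulating at $p$ are irrelevant to the local type of $p$ and, for a small enough $U$, do not even enter $U$.) Hence in either hypothesis we may assume $\operatorname{med}(\operatorname{Sep}(\mathcal{F},U))>0$ for arbitrarily small $U$.

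Next I would produce a separatrix $S_{0}$ that is a measure concentration point of the separatrices. Since a separatrix meets a fixed transverse disc $\Sigma$ in a bounded number of points, the separatrices lie in $\Omega(\mathcal{F},\Sigma,k)$ for some $k$, so positive measure of separatrices gives, by the same argument as in Claim~\ref{Claim:existsleaf}, a separatrix $S_{0}\subset\mathcal{C}_{\mu}(\Omega(\mathcal{F},\Sigma,k))$. Lemma~\ref{Lemma:finitelocalholonomy} then forces the local holonomy $h$ of $S_{0}$ to be periodic, and the classical linearization of a finite order holomorphic germ makes $h$ conjugate to a diagonal map with root-of-unity eigenvalues. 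As $S_{0}$ is tangent at $p$ to an eigenvector $v$ of $DX(p)$, say with eigenvalue $\lambda_{i}$, these eigenvalues are the transverse multipliers $\exp(2\pi i\,\lambda_{j}/\lambda_{i})$, $j\neq i$; their being roots of unity forces $\lambda_{j}/\lambda_{i}\in\mathbb{Q}$ for all $j$. Writing $\lambda=\lambda_{i}/n_{i}$, all eigenvalues then lie on the line $\mathbb{R}\lambda$ and equal $\lambda_{j}=n_{j}\lambda$ with $n_{j}\in\mathbb{Z}\setminus\{0\}$.

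It remains to show that the $n_{j}$ share a sign (Poincar\'e domain) and that no resonant terms occur. For the sign I would work on the transverse disc $\Sigma$ to $S_{0}$: following a leaf $L_{z}$, $z\in\Sigma$, toward $p$ means letting the $v$-direction parameter $\mu\to 0$, and the transverse coordinate in the $v_{j}$-direction behaves like $\mu^{n_{j}}$; if some $n_{j}$ had sign opposite to $n_{i}$ this coordinate would blow up, so $L_{z}$ could accumulate at $p$ only for $z$ in the proper subspace $\{x_{j}=0:\operatorname{sign}n_{j}\neq\operatorname{sign}n_{i}\}$, a measure-zero subset of $\Sigma$, contradicting $\operatorname{med}(\operatorname{Sep}(\mathcal{F},U))>0$. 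Hence all $n_{j}$ have the same sign and, replacing $\lambda$ by $-\lambda$ if needed, $n_{j}\in\mathbb{Z}_{+}$; in particular $0$ is not in the convex hull of $\{\lambda_{j}\}$, so we are in the Poincar\'e domain. Since the ratios are rational we are in the resonant Poincar\'e case, where Poincar\'e--Dulac gives an analytic normal form with finitely many resonant monomials; a nonzero resonant monomial leaves only finitely many separatrices (as in $\dot{x}_{1}=\lambda x_{1},\ \dot{x}_{2}=2\lambda x_{2}+c\,x_{1}^{2}$, whose non-axis leaves involve a logarithm and are not closed off $p$), which by the discussion preceding the lemma has measure zero. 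So positive measure forces the normal form to be linear, $\dot{x}=Ax$ with $A=\lambda\operatorname{diag}(n_{1},\dots,n_{m})$; rescaling $X$ by $\lambda^{-1}$, which does not change $\mathcal{F}$, yields $A=\operatorname{diag}(n_{1},\dots,n_{m})\in\operatorname{GL}(m,\mathbb{Z}_{+})$.

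The main obstacle is the sign step: the clean ``$\mu^{n_{j}}$'' description of the transverse coordinate is literally valid in the linearized model, whereas at that point $X$ is only known to have rational eigenvalue ratios and need not yet be linearizable, so one must avoid circularity. To make rigorous the confinement of accumulating leaves to a proper invariant subspace in the Siegel case, I would either invoke the dominance of the linear part along $S_{0}$ via a holomorphic Hadamard--Perron strong stable and unstable manifold argument, whose invariant manifolds are tangent to the sums of eigenspaces lying in a common half-plane and are proper precisely when $0$ lies in the convex hull of $\{\lambda_{j}\}$, or else establish directly that in the Siegel domain the set of points whose leaf accumulates only at $p$ lies in finitely many proper analytic subspaces and hence has measure zero. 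Once the Siegel domain is excluded, the remaining reductions are routine given the results quoted above.
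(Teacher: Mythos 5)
Your reduction of the second hypothesis to $\mu$-dicriticality is where the proposal breaks. The parenthetical claim that leaves closed off the singular set which do not accumulate at $p$ ``do not even enter $U$'' for small enough $U$ has no uniformity in the leaf and is false: take $\dot x=x,\ \dot y=-y$ at $0\in\mathbb{C}^{2}$, a non-degenerate Siegel singularity. Every leaf $\{xy=c\}\cap U$, $c\neq0$, is closed in $U\setminus\{0\}$, does not accumulate at $0$, and such leaves fill a full-measure subset of \emph{every} neighborhood $U$ of $0$, while the only separatrices are the two axes. The same example shows that the final sentence of the lemma is actually false if read literally for an arbitrary non-degenerate singularity, so no blind proof of that literal statement could succeed. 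The reading under which the paper later uses it (proof of Lemma~\ref{Lemma:poincaresiegel}) is: \emph{for a singularity already known to be in the Poincar\'e domain}, every nearby leaf accumulates at $p$ (immediate from the Poincar\'e--Dulac form, since the flow in a suitable real-time direction contracts a neighborhood of $p$ to $p$), hence near such a singularity ``closed off the singular set'' is equivalent to ``separatrix'' and the second hypothesis collapses to the first. So the Poincar\'e/Siegel dichotomy must come \emph{before} the reduction, not after it; your ordering of steps cannot be repaired as written.

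For the first hypothesis (which, by definition of $\mu$-dicritical, already says $\operatorname{med}(\operatorname{Sep}(\mathcal{F},U))>0$, so no reduction is needed) your route is coherent but genuinely different from the paper's, and its key step is still open. The paper reads the lemma directly off the Poincar\'e--Dulac discussion preceding it: a non-linearizable Poincar\'e singularity has separatrices confined to coordinate hyperplanes not all of which are invariant, hence a zero-measure separatrix set, and a diagonal linear field is $\mu$-dicritical iff the eigenvalue ratios are positive rationals; no holonomy enters. Your plan instead derives rationality of the ratios from periodicity of the holonomy of a concentration separatrix (Lemmas~\ref{bounded positive measure} and~\ref{Lemma:finitelocalholonomy}); this works, modulo the chicken-and-egg choice of $\Sigma$ and $k$ before $S_{0}$ is known (fixable by covering a punctured neighborhood of $p$ with countably many transverse discs before running the concentration argument of Claim~\ref{Claim:existsleaf}). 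But the sign step, which you correctly flag as the main obstacle, is left as a disjunction of two unexecuted plans. The Hadamard--Perron plan does close it and should be carried out: after rescaling so that all eigenvalues are nonzero integers, every separatrix is tangent to an eigendirection and the restriction of the real-time flow to it converges to $p$ in forward or backward time, so every separatrix lies in $W^{+}\cup W^{-}$; if the signs were mixed, this union would be a proper analytic subset and $\operatorname{Sep}(\mathcal{F},U)$ would have zero measure. Note that this argument is precisely what the paper itself leaves implicit in asserting ``$\mu$-dicritical $\Rightarrow$ Poincar\'e''.
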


A singularity as above will be called \textit{radial type singularity}.

\subsection{Singularities in the Siegel domain\label{subsubsection:siegel}}

We consider the following situation: $\mathcal{F}$ is a one-dimensional
foliation defined in a neighborhood of the origin $0\in\mathbb{C}^{m}$, which
is assumed to be a (non-degenerate) Siegel type singularity. We recall that
the singularity is of normal type if we can choose small polydiscs $U\ni0$
centered at the origin endowed with local coordinates $x=(x_{1},\cdots
,x_{m})\in U$ such that:

\begin{enumerate}
\item[(i)] The coordinate hyperplanes $H_{j}=\{x_{j}=0\}$ are invariant by the foliation.

\item[(ii)] The coordinate axes $\mathcal{O}_{x_{j}}$ contain separatrices
whose holonomy maps, denoted by $h_{j}$, are related to the analytic
classification of the germ of the foliation at the origin (see
section~\ref{section:elizarov}).
\end{enumerate}

Coordinates as above will be called \textit{adapted} to the singularity. The
existence of such adapted coordinates (in dimension $m\geq3$) for any Siegel
type singularity is discussed in \cite{Camara-Scardua-separatrix}. Fix a
holomorphic vector field $X$ in $U$ with an isolated singularity at $p$
defining the restriction $\mathcal{F}(U):=\left.  \mathcal{F}\right\vert _{U}%
$. Let $\lambda_{j}\in\mathbb{C}$ be the eigenvalue of $DX(0)$ corresponding
to the eigenvector tangent to the $\mathcal{O}_{x_{j}}$ axis. For a suitable
choice of $X$ and of the local \textit{adapted} coordinates we can write%
\begin{equation}
X=\sum\limits_{j=1}^{m}\lambda_{j}x_{j}\frac{\partial}{\partial x_{j}}%
+(x_{1}\cdots x_{m})\cdot\,X_{2}(x_{1},\cdots,x_{m})
\label{equation:Siegelvectorfield}%
\end{equation}
where $\lambda_{j}\in\mathbb{C},\forall j$, and $X_{2}$ is a vector field
defined in a neighborhood of the origin of the coordinate system
$(x_{1},\cdots,x_{m})$.

As we shall see, under suitable conditions we can assure that the eigenvalues
are integral numbers (cf. Lemma~\ref{bounded positive measure}). Indeed, we
may choose transverse discs $\Sigma_{j}$ centered at points $q_{j}%
:(x_{j}=a_{j})\in\mathcal{O}_{x_{j}}$. Then the holonomy maps $h_{j}$ have
representatives given by local diffeomorphisms $h_{j}:(\Sigma_{j}%
,q_{j})\rightarrow(\Sigma_{j},q_{j})$ of the form
\[
h_{j}(x_{1},\cdots,\widehat{x_{j}},\cdots,x_{n})=(x_{1}a_{1}^{j}(x_{1}%
,\cdots,\widehat{x_{j}},\cdots,x_{m}),\cdots,\widehat{x_{j}},\cdots,x_{m}%
a_{m}^{j}(x_{1},\cdots,\widehat{x_{j}},\cdots,x_{m})),
\]
where $a_{i}^{j}$ is a holomorphic function in a neighborhood of the origin
$0\in\mathbb{C}^{m-1}$, for all $i=1,\ldots,\widehat{j},\ldots,m$ (the hat
$\widehat{x_{j}}$ stands for omitting that coordinate). The map $h_{j}$ has
linear part given by $Dh_{j}(0)\cdot(x_{1},\cdots,\widehat{x_{j}},\cdots
,x_{n})=(\exp(\frac{\lambda_{1}}{\lambda_{j}})\cdot x_{1},\cdots
,\widehat{x_{j}},\cdots,\exp(\frac{\lambda_{n}}{\lambda_{j}})\cdot x_{n})$. If
the map $h_{j}$ has set of periodic orbits with positive measure, then it is
periodic as we have already seen above (cf. Lemmas
\ref{bounded positive measure} and \ref{Lemma:finitelocalholonomy}). This
implies that the quotients $\frac{\lambda_{1}}{\lambda_{j}}$ are rational
numbers for every $i\neq j$. Therefore, up to dividing the generating vector
field $X$ by a suitable complex number, we may assume that each eigenvalue
$\lambda_{j}$ is a rational number. Such a Siegel singularity will be called
\textit{resonant}. Resonant Siegel type singularities exhibit at most one
non-dicritical separatrix. In dimension $m=3$, a resonant Siegel type
singularity exhibits exactly one \textit{non-dicritical} separatrix, i.e., one
corresponding to an eigenvalue of signal different from the signal of other
two eigenvalues. That is the key point in the discussion that follows.

\subsection{Analytic linearization of Siegel
singularities\label{section:elizarov}}

We end this paragraph dealing with the analytic linearization of Siegel
singularities. Consider a germ of a Siegel singularity at the origin
$0\in\mathbb{C}^{m}$. In the $2$-dimensional case (i.e., $m=2$), the local
holonomy of a separatrix gives the full analytic classification of the
singularity (cf. e.g. \cite{IlyYak2006}). Nevertheless, this is more delicate
in case $m\geq3$. Indeed, a Siegel type singularity may look like a dicritical
singularity when restricted to suitable invariant planes.

In order to better describe generic isolated singularities in the
$m$-dimensional case ($m\geq3$) we need some notation.

\begin{definition}
[Condition $(\star)$]\label{definition:conditionstar} Let $X$ be a germ of a
holomorphic vector field at the origin such that the origin $0\in
\mathbb{C}^{m}$ is a singularity in the Siegel domain. We say that $X$
satisfies condition $(\star)$ if there is a real line $L\subset\mathbb{C}$
through the origin avoiding all the eigenvalues of $X$ such that one of the
connected components of $\mathbb{C}\setminus L$ contains just one eigenvalue
of $X$.
\end{definition}

The above condition holds for $X$ if and only if holds for any vector field
$Y$ such that $X$ and $Y$ are tangent. Condition $(\star)$ implies that $X$ is
in the Siegel domain, but it is stronger than this last. Denote by
$\lambda(X)$ the isolated eigenvalue of $X$ and by $S_{X}$ its corresponding
invariant manifold (the existence is granted by the classical invariant
manifold theorem). We call $S_{X}$ the \textit{distinguished axis} or
\textit{distinguished separatrix} of $X$. The singularity will be called
\textit{holonomy-linearizable} if the holonomy map associated to the
distinguished separatrix is analytically conjugate to its linear part. The
notions of analytically linearizable and holonomy-linearizable are strongly
related as we will see in what follows.

In \cite{Elizarov} it is proved the following result:

\begin{theorem}
[\cite{Elizarov}]\label{Remark:elizarov}Let $X$ and $Y$ be two normal Siegel
type germs of holomorphic vector fields with an isolated singularity at the
origin $0\in\mathbb{C}^{n}$ satisfying condition $(\star)$. Let $h_{X}$ and
$h_{Y}$ be the holonomies of $X$ and $Y$ relatively to $S_{X}$ and $S_{Y}$,
respectively. Then $X$ and $Y$ are analytically equivalent if and only if the
holonomies $h_{X}$ and $h_{Y}$ are analytically conjugate.
\end{theorem}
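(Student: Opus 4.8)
The plan is to follow the strategy of Mattei--Moussu (in its higher-dimensional incarnation due to Elizarov), reducing the analytic classification of the germ to the analytic classification of the holonomy of the distinguished separatrix. The \emph{only if} direction is essentially formal: if $\Phi$ is a germ of biholomorphism carrying the foliation $\mathcal{F}_X$ of $X$ onto the foliation $\mathcal{F}_Y$ of $Y$, then $\Phi$ takes leaves to leaves, and since condition $(\star)$ singles out the distinguished separatrix $S_X$ (the invariant manifold attached to the isolated eigenvalue $\lambda(X)$) as the unique separatrix of its kind, we must have $\Phi(S_X)=S_Y$. Transporting a transverse disc by $\Phi$ and using that holonomy is intrinsic up to conjugacy then yields an analytic conjugacy between $h_X$ and $h_Y$.

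For the \emph{if} direction, suppose $\varphi\colon(\Sigma_X,q_X)\to(\Sigma_Y,q_Y)$ is a germ of biholomorphism with $\varphi\circ h_X=h_Y\circ\varphi$, where $\Sigma_X,\Sigma_Y$ are discs transverse to the two foliations at nonsingular points of the distinguished separatrices. First I would build the foliated equivalence on a neighborhood of the punctured separatrix. Since $S_X\setminus\{0\}$ is biholomorphic to a punctured disc, a tubular neighborhood of it carries the structure of the suspension of $h_X$ over the loop $\gamma$ generating $\pi_{1}(S_X\setminus\{0\})$, and likewise for $Y$. The conjugacy relation $\varphi\circ h_X=h_Y\circ\varphi$ is exactly the condition needed for $\varphi$ to descend to a well-defined biholomorphism $\Phi_{0}$ of the two suspensions taking leaves to leaves: concretely, one transports a point along its leaf to the fixed transverse disc (using the transverse uniformity Lemma~\ref{Lemma:transvuniform}), applies $\varphi$, and transports back along the corresponding leaf of $\mathcal{F}_Y$, the holonomy identity guaranteeing independence of the chosen path and hence single-valuedness.

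It then remains to extend $\Phi_{0}$ across the separatrix and across the singular point. The map $\Phi_{0}$ is a priori holomorphic only on a punctured tubular neighborhood of $S_X\setminus\{0\}$; boundedness of $\Phi_{0}$ near the points of $S_X\setminus\{0\}$ together with Riemann's removable singularity theorem extends it holomorphically across the separatrix, producing a biholomorphism $\Phi$ on a full punctured neighborhood $U\setminus\{0\}$ of the origin. Since $\{0\}$ has codimension $n\geq2$ and $\Phi$ is bounded, the several-variables removable singularity theorem extends $\Phi$ holomorphically across $0$. The resulting germ $\Phi$ conjugates $\mathcal{F}_X$ to $\mathcal{F}_Y$, hence, after possibly rescaling the defining vector fields, realizes the analytic equivalence of $X$ and $Y$.

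I expect the main obstacle to be the rigorous construction and single-valuedness of the leafwise map $\Phi_{0}$ in the presence of the other invariant hyperplanes: unlike the two-dimensional case, the leaves near $0$ accumulate on several coordinate axes, and one must use normality together with condition $(\star)$---which isolates $\lambda(X)$ and thereby controls the transverse dynamics of $S_X$---to ensure that the suspension picture faithfully encodes the germ and that the holonomy of the distinguished separatrix alone suffices to recover it. Verifying holomorphy and the requisite boundedness estimates uniformly up to the separatrix, so that the removable-singularity extensions apply, is the technical heart of the argument.
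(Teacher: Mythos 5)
A preliminary remark: the paper itself gives no proof of this theorem. It is imported verbatim from \cite{Elizarov} (with a fibered version attributed to \cite{Helena}), so there is no in-paper argument to compare against; your attempt has to be judged as a reconstruction of the Elizarov--Ilyashenko proof, which is indeed of Mattei--Moussu type, and your overall skeleton (intrinsic invariance of the distinguished separatrix for the ``only if'' part; leafwise transport of the holonomy conjugacy followed by removable-singularity extensions for the ``if'' part) is the right one. The ``only if'' half is essentially complete: an orbital equivalence conjugates the linear parts up to a nonzero scalar, condition $(\star)$ is preserved under such a rescaling of the spectrum, so the isolated eigenvalue goes to the isolated eigenvalue and $\Phi(S_X)=S_Y$.

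The ``if'' half, however, has a genuine gap, and it is precisely the part you defer as the ``expected obstacle.'' Two concrete problems. First, a tubular neighborhood of $S_X\setminus\{0\}$ does \emph{not} carry the structure of the suspension of $h_X$: the foliation near the punctured separatrix is not a flat bundle, leaves escape any fixed tube, and the path-lifting map is only partially defined. What makes the construction work is the geometric content of condition $(\star)$: choosing $\beta$ so that $\mathrm{Re}(\lambda_1 e^{-i\beta})>0$ while $\mathrm{Re}(\lambda_j e^{-i\beta})<0$ for all the non-distinguished eigenvalues, one shows (in the normal form, with estimates controlling the nonlinear part) that every leaf through a point off the invariant hyperplanes reaches a fixed transversal $\Sigma_X$ to $S_X$; i.e.\ the saturation of $\Sigma_X$ contains a full neighborhood of $0$ minus the invariant hyperplanes, and moreover the transported map stays bounded there. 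This saturation-plus-boundedness statement (the higher-dimensional analogue of the Camacho--Kuiper--Palis description used by Mattei--Moussu in dimension two, cf.\ \cite{CKP}) is the theorem's heart, and your proof assumes it rather than proves it. Second, even granting the construction near $S_X\setminus\{0\}$, your extension chain is broken: after Riemann extension across $S_X$, the map is defined only on a neighborhood of $S_X\setminus\{0\}$, and such a set together with $\{0\}$ is \emph{not} a neighborhood of the origin, so the codimension-two (Hartogs-type) removable singularity theorem at $0$ does not yet apply. One must first use the saturation property to define $\Phi$ on a full polydisc minus the invariant hyperplanes, then extend across those hyperplanes (codimension one, by boundedness and Riemann), and only then across the origin. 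So: correct strategy, correct identification of where the difficulty lies, but the decisive geometric estimates are absent and the stated extension route would not compile into a proof as written.
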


For a fibered version of the above result we refer to \cite{Helena}, p. 1656.
Notice that a resonant (i.e., a rational eigenvalues) germ of a holomorphic
vector field in the Siegel domain necessarily satisfies condition $(\star)$,
otherwise it would have all eigenvalues with the same signal (positive or
negative) and would be in the Poincaré domain. Therefore, an immediate
consequence of the above results is the following:

\begin{lemma}
\label{Lemma:linearizingSiegel} Let $X$ be a resonant normal Siegel type
holomorphic vector field germ at the origin $0\in\mathbb{C}^{3}$ satisfying
condition $(\star)$. Then the following conditions are equivalent:

\begin{itemize}
\item[\textrm{(i)}] The germ of foliation $\mathcal{F}(X)$ induced by $X$ is
analytically linearizable.

\item[\textrm{(ii)}] $\mathcal{F}(X)$ is holonomy-linearizable, i.e., the
holonomy map of $\mathcal{F}(X)$ relatively to the separatrix $S_{X}$ tangent
to the eigenspace associated to the distinguished axis of $X$ is analytically
conjugate to its linear part.
\end{itemize}
\end{lemma}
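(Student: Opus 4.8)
The plan is to prove the equivalence by exploiting the tight relationship, supplied by Theorem~\ref{Remark:elizarov} of Elizarov, between the analytic class of a normal Siegel germ satisfying condition $(\star)$ and the analytic conjugacy class of its distinguished-separatrix holonomy. The direction (i)$\Rightarrow$(ii) is the soft one: if $\mathcal{F}(X)$ is analytically linearizable, then $X$ is analytically equivalent to its linear part $X_L=\sum_j\lambda_j x_j\,\partial/\partial x_j$. The holonomy is a conjugacy invariant of the foliation (it is well-defined up to conjugacy by germs of holomorphic diffeomorphisms), so the holonomy $h_X$ of $\mathcal{F}(X)$ relative to $S_X$ must be analytically conjugate to the holonomy $h_{X_L}$ of the linear foliation relative to its corresponding axis. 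But the linear holonomy is computed directly from the eigenvalue data: as displayed earlier in the excerpt, the holonomy of the linear model along the axis $\mathcal{O}_{x_j}$ has diagonal linear part with entries $\exp(\lambda_i/\lambda_j)$, and in fact for the linear vector field the holonomy map is \emph{exactly} linear (this is the standard computation: integrating $\dot x=X_L$ and following the flow once around the loop produces a genuinely linear map). Hence $h_X$ is analytically conjugate to a linear map, which is precisely condition (ii).

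For the converse (ii)$\Rightarrow$(i), I would run Elizarov's theorem as a rigidity engine. Assume $h_X$ is analytically conjugate to its own linear part $Dh_X(0)$. Let $X_L$ denote the linear vector field with the same eigenvalues $\lambda_1,\lambda_2,\lambda_3$ as $X$; it is itself a normal Siegel germ and, being resonant, satisfies condition $(\star)$ (as the remark following Theorem~\ref{Remark:elizarov} notes, resonant Siegel germs automatically verify $(\star)$). Its distinguished-separatrix holonomy $h_{X_L}$ is linear, and by construction it is conjugate to $Dh_X(0)$ provided the linear parts agree. The key point is therefore to identify $Dh_X(0)$ with $h_{X_L}$: both are linear maps whose eigenvalues are the exponentials of the ratios $\lambda_i/\lambda_j$ determined by the common eigenvalue data, so they are (linearly) conjugate. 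Chaining the conjugacies gives that $h_X$ and $h_{X_L}$ are analytically conjugate. Applying Theorem~\ref{Remark:elizarov} with $Y=X_L$ then yields that $X$ is analytically equivalent to $X_L$, i.e.\ $\mathcal{F}(X)$ is analytically linearizable, which is (i).

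The step I expect to be the main obstacle is the bookkeeping needed to legitimately invoke Elizarov's theorem in the converse, namely verifying that the linearized comparison field $X_L$ genuinely falls under its hypotheses and that the holonomy one feeds in is the \emph{distinguished} one relative to $S_{X_L}$ rather than some other separatrix. One must check that $X_L$ is of normal Siegel type (the coordinate hyperplanes are invariant for the diagonal linear field, so normality is immediate) and that the distinguished separatrix $S_{X_L}$ corresponds to the same isolated eigenvalue $\lambda(X)$ singled out by condition $(\star)$; this requires observing that linearization does not move the distinguished axis, since it is pinned down by the eigenvalue configuration in $\mathbb{C}$, which is a conjugacy invariant. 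A subsidiary care point is to confirm that the linear part $Dh_X(0)$ is a faithful representative of the holonomy's linear data---this is where the explicit exponential form $\exp(\lambda_i/\lambda_j)$ recorded above does the work, guaranteeing the eigenvalues of the two linear holonomies coincide so that the purely linear conjugacy is available.
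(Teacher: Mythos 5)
Your proof is correct and follows essentially the same route as the paper, which presents this lemma as an immediate consequence of Elizarov's theorem (Theorem~\ref{Remark:elizarov}): the paper leaves implicit exactly the steps you spell out, namely comparing $X$ with its linear model $X_L$ and matching the distinguished-separatrix holonomies via the exponential eigenvalue data. Your verification that $X_L$ is normal, satisfies $(\star)$, and has linear holonomy conjugate to $Dh_X(0)$ is precisely the bookkeeping the paper's ``immediate consequence'' glosses over.
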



In terms of our notion of measure concentration point we have:

\begin{lemma}
\label{Lemma:poincaresiegel} Let $\mathcal{F}$ be a one-dimensional
holomorphic foliation with non-degenerate singularities in a three-dimensional
complex manifold $M^{3}$. Denote by $\Omega^{0}(\mathcal{F})$ the set of all
leaves which are closed off the singular set $\operatorname{Sing}%
(\mathcal{F})\subset M^{3}$. Given a singularity $p\in M^{3}$, suppose that
$p\in\mathcal{C}_{\mu}(\Omega^{0}(\mathcal{F}))$. Then we have two possibilities:

\begin{itemize}
\item[\textrm{(i)}] The singularity is in the Poincaré domain and it is
analytically linearizable, indeed it is of radial type;

\item[\textrm{(ii)}] The singularity is in the Siegel domain and resonant. If
$p$ is a normal singularity, then $\mathcal{F}$ is analytically linearizable
with rational eigenvalues at this singular point.
\end{itemize}
\end{lemma}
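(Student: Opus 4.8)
The plan is to dichotomize on whether the singularity $p$ lies in the Poincar\'e or the Siegel domain, and in each case to exploit the hypothesis $p\in\mathcal{C}_\mu(\Omega^0(\mathcal{F}))$ to force the relevant holonomy maps to have positive-measure sets of periodic points, so that Lemma~\ref{bounded positive measure} applies. First I would observe that since $p\in\mathcal{C}_\mu(\Omega^0(\mathcal{F}))$, every small neighborhood $U\ni p$ meets $\Omega^0(\mathcal{F})$ in a positive-measure set; in particular, the leaves closed off $\operatorname{Sing}(\mathcal{F})$ near $p$ form a set of positive measure. I would then split into the two domain cases, which is exhaustive by the definition of non-degenerate singularity in Section~\ref{section:siegelpoincare}.

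For case (i), the Poincar\'e domain, I would invoke Lemma~\ref{Lemma:poincareholonomylinearizable} directly: its second assertion states precisely that if the set of leaves closed off the singular set has positive measure, then the singularity is analytically linearizable and of radial type $\dot{x}=Ax$ with $A\in\operatorname{GL}(m,\mathbb{Z}_+)$. Since $p\in\mathcal{C}_\mu(\Omega^0(\mathcal{F}))$ gives exactly the positive-measure hypothesis in arbitrarily small neighborhoods, this case is immediate once I have set up the measure bookkeeping. The main content is really the Siegel case.

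For case (ii), I would work in the adapted coordinates of Section~\ref{subsubsection:siegel}, writing $X$ in the form \eqref{equation:Siegelvectorfield} and considering the holonomy maps $h_j$ associated to the coordinate axes. The key step is to show that the positive-measure concentration of closed-off-$\operatorname{Sing}(\mathcal{F})$ leaves at $p$ propagates to a positive-measure set of periodic points for each $h_j$ on a suitable transverse disc $\Sigma_j$. Concretely, leaves closed off the singular set that pass near the separatrices correspond, via the holonomy, to periodic pseudo-orbits of bounded order; so using Lemma~\ref{Lemma:finitelocalholonomy} (or directly Lemma~\ref{bounded positive measure}) I conclude each $h_j$ is periodic. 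Periodicity of $h_j$ forces its linear part, whose eigenvalues are $\exp(\lambda_i/\lambda_j)$, to have finite order, which makes every ratio $\lambda_i/\lambda_j$ rational; thus after rescaling $X$ the eigenvalues are rational and the singularity is resonant. Finally, in dimension $m=3$ with $p$ normal, a resonant Siegel singularity automatically satisfies condition $(\star)$ (as noted after Theorem~\ref{Remark:elizarov}), and the periodicity of the holonomy of the distinguished separatrix means the singularity is holonomy-linearizable; then Lemma~\ref{Lemma:linearizingSiegel} yields analytic linearizability.

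The hard part will be the propagation step in case (ii): carefully relating the positive-measure set of leaves closed off $\operatorname{Sing}(\mathcal{F})$ accumulating at $p$ to a positive-measure set of \emph{genuinely periodic} points (of uniformly bounded order) for the individual axis-holonomies $h_j$. This requires controlling, via the transverse uniformity Lemma~\ref{Lemma:transvuniform}, how a leaf's finite intersection with one transverse disc transfers to a finite periodic pseudo-orbit on the $\Sigma_j$, and checking that the order bound is uniform so that Lemma~\ref{bounded positive measure} genuinely applies rather than just Schur-type finiteness. Once this measure-transfer is established, the rest (rationality of eigenvalues via the exponential of the linear part, and the linearization in $m=3$) follows cleanly from the cited results.
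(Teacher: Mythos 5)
Your proposal is correct and follows essentially the same route as the paper's own proof: the same Poincar\'e/Siegel dichotomy, with the Poincar\'e case settled by Lemma~\ref{Lemma:poincareholonomylinearizable}, and the Siegel case handled in adapted coordinates by showing the axis holonomies $h_j$ have positive-measure sets of periodic points, invoking Lemma~\ref{bounded positive measure} to get periodicity, hence rational eigenvalue ratios and resonance, and then Lemma~\ref{Lemma:linearizingSiegel} (via condition $(\star)$) for analytic linearizability in the normal three-dimensional case. The measure-transfer step you flag as the hard part is indeed the point the paper treats most tersely, and your plan for it (decomposing by order bound so that the hypothesis of Lemma~\ref{bounded positive measure} genuinely holds) is the right way to fill it in.
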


\begin{proof}
From what we have seen in Lemma~\ref{Lemma:poincareholonomylinearizable}, for
any singularity in the Poincaré domain, the leaves not contained in
seperatrices are not closed off the singular set. Therefore, since
$p\in\mathcal{C}_{\mu}(\Omega^{0}(\mathcal{F}))$, the singularity is $\mu
$-dicritical and the result follows.

Now assume the singularity is in the Siegel domain. We may then assume that
the coordinate hyperplanes are invariant as well as the coordinate axes, which
are supposed to be tangent to the eigenvectors of (the linear part at the
origin of) the generating vector field. Since $p\in\mathcal{C}_{\mu}%
(\Omega^{0}(\mathcal{F}))$, for any arbitrarily small neighborhood $U$ of $p$
in $M$, the set of leaves of $\left.  \mathcal{F}\right\vert _{U}$ closed off
$p$ has positive measure. Therefore, this set is not contained in the
coordinate hyperplanes or any countable set of hypersurfaces. Thus, the
holonomy maps $h_{j},$ $j=1,\ldots,m$, associated to the coordinate axes, have
positive measure sets of periodic orbits. From Lemma
\ref{bounded positive measure}, this implies that the maps $h_{j}$ are
periodic (and therefore, analytically linearizable) and the eigenvalues of the
generating vector field can be assumed to be rational numbers. Hence, the
singularity is resonant and holonomy-analytically linearizable.
\end{proof}

\section{Dulac correspondence at a Siegel type corner}

\label{section:Dulac}

The framework in this section is motivated by Lemma~\ref{Lemma:poincaresiegel}%
. Let $\mathcal{F}$ be a one-dimensional foliation defined in a neighborhood
of the origin $0\in\mathbb{C}^{m}$, which is assumed to be a normal Siegel
type singularity. We can choose small polydiscs $U\ni0$ centered at the origin
endowed with adapted local coordinates $x=(x_{1},\cdots,x_{m})\in U$ as in
~§\ref{subsubsection:siegel}. Again we fix disc type transverse sections
$\Sigma_{j}=\{x_{j}=a_{j}\}\subset U$ for some sufficiently close to the
origin $a_{j}\in\mathbb{C}$, $\,j=1,\ldots,n$ and denote the intersection
points by $q_{j}=\Sigma_{j}\cap\mathcal{O}_{x_{j}}$.

Fix a holomorphic (generator) vector field $X$ in $U$ with an isolated
singularity at $p$ defining the restriction $\mathcal{F}(U)$. Denote by
$\lambda_{j}\in\mathbb{C}$ the eigenvalue of $DX(0)$ corresponding to the
eigenvector tangent to the $\mathcal{O}_{x_{j}}$ axis and write $X$ as in
equation~(\ref{equation:Siegelvectorfield}). From now on, we assume that $0$
is an \textit{holonomy-analytically linearizable singularity with rational
eigenvalues}.

The Dulac correspondence will be defined as a correspondence $\mathcal{D}%
_{ij}$ from certain subsets of $\Sigma_{i}$ onto certain subsets of
$\Sigma_{j}$ as follows (cf. \cite{Camacho-Scardua, scarduaJDCS}).

The general idea is motivated by the following two dimensional picture: Since
$\lambda_{1}\cdot\lambda_{2}<0$, the topological analytic description of
Siegel plane singularities says that any leaf $L_{z}\in\mathcal{F}$ passing
through $z\in\Sigma_{i}$ must intersect $\Sigma_{j}$ provided that $z$ is
close enough to the origin of $\Sigma_{i}$. Therefore, we associate to the
intersection points $L_{z}\cap\Sigma_{i}$, the intersection points $L_{z}%
\cap\Sigma_{j}$. We shall write $\mathcal{D}_{ij}(z)$ to denote this subset
$L_{z}\cap\Sigma_{j}$ just for simplicity. The Dulac correspondence is a
multivalued correspondence ${\mathcal{D}}_{ij}:\Sigma_{i}\rightarrow\Sigma
_{j}$, which is obtained by following the local leaves of $\left.
\mathcal{F}\right\vert _{U}$ from $\Sigma_{i}$ to $\Sigma_{j}$.

Next we describe the Dulac correspondence on each specific case we need. The
main hypothesis being that the singularity is Siegel resonant, normal and holonomy-linearizable.

\subsection{Dimension two}

Suppose $m=2$. In this case holonomy-analytically linearizable foliations and
analytically linearizable foliations are equivalent notions. Thus, we are
actually assuming that the origin is a linearizable singularity in the Siegel
domain. We may then choose local holomorphic coordinates $(x,y)\in U$ such
that the local separatrices $D_{i}$ and $D_{j}$ through the singularity are
given by $D_{i}:(x=0)$, \thinspace\ $D_{j}:(y=0)$, \thinspace\ and such that
$\left.  \mathcal{F}\right\vert _{U}$ is given by $\lambda xdy-\mu ydx=0$,
$q_{o}:x=y=0$, where $\lambda,\mu\in\mathbb{Q}$ and $\frac{\lambda}{\mu}%
\in\mathbb{Q}_{-}$. We fix the local transverse sections as $\Sigma_{j}=(x=1)$
and $\Sigma_{i}=(y=1)$, such that $\Sigma_{i}\cap D_{i}=q_{i}\neq q_{o}$ and
$\Sigma_{j}\cap D_{j}=q_{j}\neq q_{o}$. Let us denote by $h_{o}\in
\operatorname*{Diff}(\Sigma_{i},q_{i})$ the local holonomy map of the
separatrix $D_{i}$ corresponding to the corner $q_{o}$. Then we have
$h_{o}(x)=\exp(2\pi\frac{\lambda}{\mu}\sqrt{-1})\cdot x$. The Dulac
correspondence is therefore given by
\[
{\mathcal{D}}_{ij}\colon(\Sigma_{i},q_{i})\rightarrow(\Sigma_{j}%
,q_{j}),{\mathcal{D}}_{ij}(x_{o})=x_{o}^{\frac{\mu}{\lambda}}.
\]

\subsection{Dimension three}

We may assume that the eigenvalues of the linear part of $X$ are $\lambda
_{1}\in\mathbb{Q}_{-}$ and $\lambda_{2},\lambda_{3}\in\mathbb{Q}_{+}$. The
coordinate plane $H_{1}=E(x_{2}x_{3})=\{x_{1}=0\}$ is invariant by
$\mathcal{F}(U)$ and we denote this foliation by $\mathcal{F}(U)_{x_{2}x_{3}}%
$. Furthermore, this planar foliation is analytically linearizable and
\textit{absolutely dicritical}, i.e., all leaves are contained in separatrices
and accumulate at the origin. In particular, the closure of each leaf
$L\in\mathcal{F}(U)_{x_{2}x_{3}}$ is a (unique) separatrix $\Gamma
=L\cup\{0\}\subset E(x_{2}x_{3})$ through the origin. By the
analytic-linearization (or, more generally, by the topological analytic
description of Siegel singularities), we know that there is a germ at the
origin of an analytic surface $H(x_{1},\Gamma)$ that is invariant by the
vector field $X$, contains the axis $\mathcal{O}_{x_{1}}$, and the curve
$\Gamma$. The surface $H(x_{1},\Gamma)$ meets the disc $\Sigma_{1}$
transversely at a $1$-disc $\Sigma_{1}(\Gamma)$ centered at $q_{1}$. Also,
given any point $q_{2}\in\Gamma\setminus\{0\}$ and a transverse $2$-disc
$\Sigma_{2}$ centered at $q_{2}$, the surface $H(x_{1},\Gamma)$ meets
$\Sigma_{2}$ transversely at a $1$-disc $\Sigma_{2}(\Gamma)$. Reasoning as in
the dimension two case above, we obtain a Dulac correspondence $\mathcal{D}%
_{1,\Gamma}\colon\Sigma_{1}(\Gamma)\rightarrow\Sigma_{2}(\Gamma)$ by following
the local leaves of the foliation on the invariant variety $H(x_{1},\Gamma)$.
\vglue.1in A pair of separatrices of a non-degenerate Siegel type singularity
is called \textit{Siegel pair} if the quotient of the eigenvalues
corresponding to these separatrices is a negative real number. As a
consequence of the above considerations, we obtain the following complement to
Lemma~\ref{Lemma:poincaresiegel}:

\begin{lemma}
\label{Lemma:dulacaccumulation} Let $\mathcal{F}$ be a one-dimensional
holomorphic foliation with non-degenerate singularities in a complex manifold
$M$. Let $\mathcal{X}(\mathcal{F})\subset M\setminus\operatorname{Sing}%
(\mathcal{F})$ be any invariant set of leaves. Let $p\in\operatorname{Sing}%
(\mathcal{F})\subset M$ be a normal Siegel type singularity. Given a Siegel
type pair of separatrices $S_{p},S_{p}^{\prime}$ we have $S_{p}\subset
\mathcal{C}_{\mu}(\mathcal{X}(\mathcal{F}))$ if and only if $S_{p}^{\prime
}\subset\mathcal{C}_{\mu}(\mathcal{X}(\mathcal{F}))$, where $\mathcal{C}_{\mu
}(\mathcal{X}(\mathcal{F}))$ denotes the set of measure concentration points
of the set $\mathcal{X}(\mathcal{F})$.
\end{lemma}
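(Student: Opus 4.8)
The plan is to prove the two implications symmetrically, so it suffices to assume $S_{p}\subset\mathcal{C}_{\mu}(\mathcal{X}(\mathcal{F}))$ and deduce $S_{p}^{\prime}\subset\mathcal{C}_{\mu}(\mathcal{X}(\mathcal{F}))$. The first step is to \emph{localize the hypothesis on a transverse section}. I would fix a nonsingular point $q_{i}\in S_{p}\setminus\{p\}$ close to $p$ together with a transverse disc $\Sigma_{i}$ centered at $q_{i}$, and set $A_{i}=\{z\in\Sigma_{i}:L_{z}\subset\mathcal{X}(\mathcal{F})\}$. Using a foliation box $W\cong\Sigma_{i}\times P$ around $q_{i}$ and the fact that $\mathcal{X}(\mathcal{F})$ is a union of leaves, $\mathcal{X}(\mathcal{F})\cap W$ is saturated, so by Fubini $\med(W\cap\mathcal{X}(\mathcal{F}))>0$ if and only if $A_{i}$ has positive measure in $\Sigma_{i}$; shrinking $W$ around $q_{i}$ is cofinal with shrinking $\Sigma_{i}$ around $q_{i}$. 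Hence the manifold-level condition translates exactly into $q_{i}\in\mathcal{C}_{\mu}(A_{i})$ (measure taken in $\Sigma_{i}$). Conversely, since $\mathcal{C}_{\mu}(\mathcal{X}(\mathcal{F}))$ is $\mathcal{F}$-invariant by Lemma~\ref{Lemma:invariant} and closed by Lemma~\ref{Lemma:concentrationboundary}, once a single nonsingular point of $S_{p}$ lies in $\mathcal{C}_{\mu}(\mathcal{X}(\mathcal{F}))$ the whole leaf $S_{p}\setminus\{p\}$ does, and then $p\in\overline{S_{p}\setminus\{p\}}\subset\mathcal{C}_{\mu}(\mathcal{X}(\mathcal{F}))$. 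Thus the entire statement reduces to the assertion that \emph{a nonsingular point $q_{i}\in S_{p}$ near $p$ is a measure concentration point of $\mathcal{X}(\mathcal{F})$ if and only if a nonsingular point $q_{j}\in S_{p}^{\prime}$ near $p$ is}.

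Second, I would transport this concentration property across the corner by means of the Dulac correspondence $\mathcal{D}_{ij}\colon\Sigma_{i}\rightarrow\Sigma_{j}$ constructed in Section~\ref{section:Dulac}. Because $S_{p},S_{p}^{\prime}$ form a Siegel pair (the ratio of the corresponding eigenvalues being a negative real), every leaf entering near $S_{p}$ close to $p$ must exit near $S_{p}^{\prime}$, and $\mathcal{D}_{ij}$ is by construction obtained by \emph{following leaves}. Since $\mathcal{X}(\mathcal{F})$ is invariant, $z\in A_{i}$ precisely when the associated exit point(s) $\mathcal{D}_{ij}(z)$ lie in $A_{j}:=\{w\in\Sigma_{j}:L_{w}\subset\mathcal{X}(\mathcal{F})\}$; that is, $\mathcal{D}_{ij}$ carries $A_{i}$ onto $A_{j}$ as a finitely multivalued correspondence. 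In dimension two the explicit form of $\mathcal{D}_{ij}$ recalled in Section~\ref{section:Dulac} is a ramified power correspondence with nonzero rational exponent that fixes the centers $q_{i},q_{j}$ and is biholomorphic off them, hence locally bi-Lipschitz on compact subsets avoiding the centers; therefore it and its inverse send positive-measure sets to positive-measure sets and preserve the property of having the center as a measure concentration point. This gives $q_{i}\in\mathcal{C}_{\mu}(A_{i})\Leftrightarrow q_{j}\in\mathcal{C}_{\mu}(A_{j})$ and closes the two-dimensional case.

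Third, in dimension three (normal resonant Siegel, say $\lambda_{1}\in\mathbb{Q}_{-}$ and $\lambda_{2},\lambda_{3}\in\mathbb{Q}_{+}$) the Dulac correspondence is defined only \emph{along} the invariant surfaces $H(x_{1},\Gamma)$, each meeting $\Sigma_{i}$ in a $1$-disc $\Sigma_{i}(\Gamma)$ on which $\mathcal{D}_{ij}$ restricts to a one-dimensional ramified power map as above. The extra work, and the main obstacle, is to assemble these one-dimensional slices into a statement about the full $(m-1)$-dimensional transverse measure. I would argue that as $\Gamma$ ranges over the separatrices of the absolutely dicritical planar foliation $\mathcal{F}(U)_{x_{2}x_{3}}$, the discs $\Sigma_{i}(\Gamma)$ sweep out a foliation of $\Sigma_{i}$ by curves, and likewise on the section over $S_{p}^{\prime}$; a Fubini argument along this family then shows that $A_{i}$ has positive measure in $\Sigma_{i}$ exactly when a positive-measure family of slices $\Sigma_{i}(\Gamma)$ carries a positive-measure subset of $A_{i}$, a property preserved by the fiberwise bi-measurable Dulac maps. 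The delicate points are verifying that this family of slices is genuinely measurable, so that Fubini applies, and that the leafwise correspondence neither concentrates nor dilutes measure transversally to the slices; both should follow from the analytic linearization granted by Lemma~\ref{Lemma:poincaresiegel} together with the geometric description of the surfaces $H(x_{1},\Gamma)$ obtained above. Once the transport $q_{i}\in\mathcal{C}_{\mu}(A_{i})\Leftrightarrow q_{j}\in\mathcal{C}_{\mu}(A_{j})$ is in hand, the reduction of the first paragraph completes the proof.
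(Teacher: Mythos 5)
Your first two paragraphs are correct and are, in substance, all the proof the paper itself has: the paper states this lemma with no argument beyond the phrase that it is a ``consequence of the above considerations'', i.e.\ of the Dulac correspondence constructed in Section~\ref{section:Dulac}. Your reduction to transversals (Fubini in a foliation box for the saturated set, plus invariance and closedness of $\mathcal{C}_{\mu}$ via Lemmas~\ref{Lemma:invariant} and~\ref{Lemma:concentrationboundary}) and your treatment of the two-dimensional corner by the ramified power correspondence faithfully fill in what the paper leaves implicit, and they settle the case $m=2$.

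The gap is in your third paragraph, and it is not a repairable technicality: the step you flagged as ``delicate'' actually fails, in one of the two directions, for the reason you half-identify. The slices $\Sigma_{1}(\Gamma)$ do \emph{not} sweep out a foliation of the transversal $\Sigma_{1}$ to the distinguished axis: every invariant surface $H(x_{1},\Gamma)$ contains the axis $\mathcal{O}_{x_{1}}$, so every slice passes through the center $q_{1}$; they form a pencil, not a fibration. Hence Fubini applied to $A_{1}$ near $q_{1}$ only yields \emph{some} positive-measure set of parameters $\Gamma$ whose slices carry measure, with no control forcing these parameters to approach the prescribed separatrix $S_{p}'=\Gamma_{0}$; but concentration at a point $q_{2}\in S_{p}'\setminus\{p\}$ requires measure on slices with parameter close to $\Gamma_{0}$. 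This direction of the equivalence is in fact false for arbitrary invariant sets. Take $M$ a ball around $0\in\mathbb{C}^{3}$, $\mathcal{F}$ induced by the linear field $X=-x_{1}\partial_{x_{1}}+x_{2}\partial_{x_{2}}+2x_{3}\partial_{x_{3}}$ (a normal, resonant, linear Siegel singularity), and $\mathcal{X}(\mathcal{F})=\{x_{1}x_{2}x_{3}\neq0:\ |x_{3}|\leq|x_{2}|^{2}/2\}$: since $|x_{3}|/|x_{2}|^{2}$ is constant along orbits, this is a union of leaves (those lying on the invariant surfaces $\{x_{3}=cx_{2}^{2}\}$, $0<|c|\leq1/2$). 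Every neighborhood of every point of $\mathcal{O}_{x_{1}}$ meets $\mathcal{X}(\mathcal{F})$ in a nonempty open set, so $\mathcal{O}_{x_{1}}\subset\mathcal{C}_{\mu}(\mathcal{X}(\mathcal{F}))$; yet a small ball around any $(0,0,a)$, $a\neq0$, is disjoint from $\mathcal{X}(\mathcal{F})$, because $|x_{3}|\leq|x_{2}|^{2}/2$ is impossible there; so $\mathcal{O}_{x_{3}}\not\subset\mathcal{C}_{\mu}(\mathcal{X}(\mathcal{F}))$, although $(\mathcal{O}_{x_{1}},\mathcal{O}_{x_{3}})$ is a Siegel pair (eigenvalue ratio $-1/2$). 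What does survive in dimension $m\geq3$ is the single implication from a separatrix in the dicritical hyperplane to the distinguished one: near $q_{2}\in\Gamma_{0}$ the corner transition to $\Sigma_{1}$ is defined off the trace of the invariant hyperplane and is a local biholomorphism between the \emph{full} $(m-1)$-dimensional transversals, so it transports positive measure without any slicing. So you should restrict your scheme to that direction (together with the two-dimensional ``if and only if''); the converse implication for a prescribed Siegel partner cannot be proved by your Fubini argument, or by any other, since the statement itself is false in that generality.
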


\section{Stable graphs}

In order to motivate our notion below, we recall the classical real framework.
Let $X$ be a smooth vector field in a real surface $N^{2}$ with isolated
singularities. By a \textit{graph} for $X$, we mean an invariant compact
subset $\Gamma\subset N$ consisting of singularities and orbits such that the
$\alpha$-limit and the $\omega$-limit of each orbit of $X$ contain some
singularity in the graph. This notion admits a natural extension to the
complex two-dimensional case as follows:

Let $\mathcal{F}$ be a one-dimensional foliation with non-degenerate
singularities in a complex surface $M^{2}$.

\begin{definition}
[graph in complex surfaces]\label{Definition:stablegraph} A graph of
$\mathcal{F}$ is an invariant compact connected analytic subset $\Gamma\subset
M$ of pure-dimension one such that:

\begin{enumerate}
\item The singularities of $\mathcal{F}$ in $\Gamma$ are all of Siegel type;

\item Each leaf $L\subset\Gamma$ is contained in an analytic curve and
accumulates at some singularity $p\in\operatorname*{Sing}(\mathcal{F}%
)\cap\Gamma$;

\item Any local separatrix through $p\in\Gamma\cap\operatorname*{Sing}%
(\mathcal{F})$ is contained in $\Gamma$.
\end{enumerate}
\end{definition}

Next we obtain the following stability theorem for graphs:

\begin{proposition}
\label{Proposition:stablegraphstability} Let $\mathcal{F}$ be a holomorphic
foliation of dimension one with non-degenerate singularities in a compact
surface $M^{2}$. Suppose $\mathcal{F}$ has a stable graph $\Gamma\subset M$,
then there is a fundamental system of invariant neighborhoods $W$ of $\Gamma$
in $M$ such that each leaf intersecting $W$ is quasi-compact. In such a
neighborhood the foliation admits a holomorphic first integral.
\end{proposition}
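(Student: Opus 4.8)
The plan is to prove that a stable graph $\Gamma$ admits a fundamental system of invariant neighborhoods whose leaves are all quasi-compact, and that the foliation has a holomorphic first integral near $\Gamma$. I would argue by ``spreading'' finiteness of holonomy from the singularities and separatrices of $\Gamma$ out to the adjacent leaves, and then gluing the resulting local holomorphic first integrals into a single one. The key hypotheses to exploit are that $\Gamma$ is \emph{stable} (so the virtual holonomy of every leaf in $\Gamma$ is finite), that all singularities on $\Gamma$ are of Siegel type (so near each corner we have the Dulac correspondence of Section~\ref{section:Dulac} available, after reducing to the resonant holonomy-linearizable normal form), and that $\Gamma$ is compact, connected and invariant, with every separatrix through each $p \in \Gamma \cap \Sing(\mathcal{F})$ contained in $\Gamma$.

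The main steps, in order, would be as follows. First, at each Siegel singularity $p \in \Gamma$, use the fact that the local separatrices lie in $\Gamma$ together with the finiteness of the virtual holonomy to conclude (via Lemma~\ref{Lemma:linearizingSiegel} and the Elizarov-type analytic classification of Theorem~\ref{Remark:elizarov}) that the singularity is resonant, normal and analytically linearizable, so that near $p$ the foliation has a local holomorphic first integral of the form $x^{a}y^{b}$ (a monomial with the resonance exponents). Second, along each regular leaf $L \subset \Gamma$, which is an analytic curve accumulating at the singularities, I would use the finiteness of its virtual holonomy group together with Reeb's local stability Lemma~\ref{Lemma:localstability} and the transverse uniformity Lemma~\ref{Lemma:transvuniform} to produce an invariant tubular neighborhood of $L$ in which every leaf is a finite covering of $L$ and hence closed (quasi-compact), and in which $\mathcal{F}$ carries a holomorphic first integral. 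Third, I would pass between a neighborhood of $L$ and a neighborhood of an adjacent leaf through a common Siegel corner by means of the Dulac correspondence $\mathcal{D}_{ij}$, which conjugates the local holonomy data on $\Sigma_i$ to that on $\Sigma_j$; this is what allows the finiteness/periodicity properties (and the local first integrals) established on one side of a corner to be transported to the other, so that the separate local pictures are mutually compatible.

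The gluing step would be organized by a compactness and connectedness argument: cover $\Gamma$ by finitely many of the local invariant neighborhoods produced above (finitely many because $\Gamma$ is compact), intersect them to form $W$, and on overlaps check that the locally defined holomorphic first integrals differ by the natural transition rule dictated by the monomial normal forms at the corners and the Dulac maps along the leaves. Because $\Gamma$ is connected and the virtual holonomy is finite everywhere on it, the transverse holonomy of the glued neighborhood is a finite group of germs of diffeomorphisms; quotienting the transverse section by this finite group produces, via Reeb stability and the holomorphic structure, a holomorphic first integral whose level sets are precisely the (quasi-compact) leaves. Finally, shrinking the covering gives a genuine fundamental system of invariant neighborhoods.

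The hard part will be the third step, the \emph{passage through the corners}: making the Dulac correspondence carry the finiteness and the local first integral consistently across a Siegel singularity, since $\mathcal{D}_{ij}$ is a multivalued, non-invertible correspondence (involving a fractional power $x^{\mu/\lambda}$ in the two-dimensional model) rather than a genuine diffeomorphism. The delicate point is to verify that the monomial first integrals on the two coordinate axes patch along the leaves on the invariant surface to a single-valued holomorphic function near the corner, and that the transported holonomy remains finite; this is where the resonance of the eigenvalues (the rationality forced by the positive-measure/concentration hypotheses of Lemma~\ref{Lemma:poincaresiegel}) and the precise normal form are essential, and where I expect most of the technical work to lie.
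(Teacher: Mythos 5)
Your overall skeleton (linearize the Siegel corners to get monomial first integrals, spread finiteness along $\Gamma$, then glue) matches the paper's proof, which explicitly follows Mattei--Moussu \cite{Mattei-Moussu}. But your second step contains a genuine gap: you invoke Reeb's local stability (Lemma~\ref{Lemma:localstability}) for the leaves $L\subset\Gamma$, and that lemma requires a \emph{compact} leaf. The leaves of a graph are precisely not compact: each one is a component of $\Gamma$ punctured at the Siegel singularities where it accumulates. For a non-compact leaf, finite (virtual) holonomy alone does not produce a saturated tubular neighborhood in which all leaves are finite coverings --- nearby leaves can escape through the ends --- so this step fails as stated. What replaces it in the paper is the Mattei--Moussu \emph{holonomy extension}: one starts from the monomial first integral $f=x^{m}y^{n}$ of the linearized corner $nxdy-mydx=0$, restricts it to a transversal, symmetrizes it under the finite holonomy group of the component, and extends it along the leaves to a saturated neighborhood $W_{j}$ of the whole component $\Gamma_{j}$; the control at the ends comes from the linear model at the singularities, not from Reeb stability. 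Your step 3 gestures at this (``passage through the corners''), but you treat it as a compatibility check between neighborhoods already constructed in step 2, whereas it is in fact what makes those neighborhoods exist at all.

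Two lesser points. First, you cite Lemma~\ref{Lemma:linearizingSiegel} and Theorem~\ref{Remark:elizarov} to linearize the corners, but these are stated for dimension $\geq 3$ (normality, condition $(\star)$, distinguished axis); in the surface case at hand the correct and simpler fact is two-dimensional: a Siegel singularity whose separatrix holonomy is periodic --- and periodicity is forced here by the finiteness of the virtual holonomy of the separatrix, which lies in $\Gamma$ --- is analytically linearizable with negative rational eigenvalue ratio, by \cite{Mattei-Moussu}. Second, the Dulac correspondence, which you single out as the hard technical core, is not needed in this proposition: once the corners are linearized, the compatibility between the first integrals $f_{i}$, $f_{j}$ of two components meeting at a corner is the monomial relation $f_{i}^{n_{ij}}=f_{j}^{m_{ij}}$, supplied by the finiteness of the virtual holonomy, and the global first integral is then obtained by taking suitable powers, $\left. f\right\vert _{W_{j}}=f_{j}^{\nu_{j}}$. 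The Dulac correspondence is reserved in the paper for the higher-dimensional arguments (Theorem~C), where no such clean monomial patching is available.
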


\begin{proof}
The proof is somehow similar to the proof of the main result in
\cite{Mattei-Moussu}, where it is proved the existence of a holomorphic first
integral for a germ of a non-dicritical foliation in dimension two, provided
that the leaves are closed off the singular point. In \cite{Mattei-Moussu} it
is used an induction argument, since the singularity is not necessarily
non-degenerate. This is not the case here. Nevertheless, their construction of
invariant neighborhoods and holomorphic first integrals, from the finiteness
of the combined holonomy groups of the projective lines in the exceptional
divisor of the resolution of singularities, can be repeated here with minor
changes. Let us give the main steps. First we remark that all singularities in
$\Gamma$ are linearizable of the local form $nxdy-mydx=0,\,n,m\in
\mathbb{Z}_{+}$. Therefore, each such a singularity exhibits a local
holomorphic first integral of the form $f=x^{m}y^{n}$. Now, the finiteness of
the holonomy groups allow the \textit{holonomy extension} of these first
integrals to a neighborhood of each irreducible component $\Gamma_{j}%
\subset\Gamma$ of the graph $\Gamma$. Here the procedure is the same as in the
case of a single blow-up in \cite{Mattei-Moussu}. This gives a holomorphic
first integral $f_{j}$ defined in a neighborhood $W_{j}$ of $\Gamma_{j}$ in
$M$, in such a way that the union $W=\bigcup\nolimits_{j}W_{j}$ is an
invariant neighborhood of $\Gamma$. Moreover, thanks to the finiteness of the
virtual holonomy groups, i.e., of the combined holonomy groups of the
components of $\Gamma$, the first integrals $f_{j}$ are such that each corner
$q\in\Gamma_{i}\cap\Gamma_{j}\neq\emptyset$ is a singularity and in some
neighborhood of this singularity we have $f_{i}^{n_{ij}}=f_{j}^{m_{ij}}$ for
$n_{ij},m_{ij}\in\mathbb{N}$. This shows the existence of a holomorphic first
integral $f$ in $W$, which is defined on each $W_{j}$ by an expression like
$\left.  f\right\vert _{W_{j}}=f_{j}^{\nu_{j}}$ for a suitable $\nu_{j}%
\in\mathbb{N}$.
\end{proof}

\begin{proof}
[Proof of Theorem~B]Let $\mathcal{F}$ be as in Theorem~B and denote by
$\Omega(\mathcal{F})\subset M$ the union of quasi-compact leaves of
$\mathcal{F}$. Since $\Omega(\mathcal{F})$ has positive measure, then arguing
as in Lemma~\ref{Lemma:concentrationinside}, we can assure the existence of a
leaf $L_{0}\subset\mathcal{C}_{\mu}(\Omega(\mathcal{F}))$. On the other hand,
we conclude from Lemma~\ref{bounded positive measure} the finiteness of the
holonomy group of $L_{0}$. Analogously, the virtual holonomy group of $L_{0}$
is finite. Finally, applying induction and the existence of first integrals
nearby each separatrix meeting $L_{0}$ (\cite{Mattei-Moussu}), we construct a
stable graph $\Gamma$ for $\mathcal{F}$.
\end{proof}

\begin{example}
\textrm{Let $X$ be the polynomial vector field $X=x\frac{\partial}{\partial
x}+y\frac{\partial}{\partial y}+\lambda z\frac{\partial}{\partial z}$ on
$\mathbb{C}^{3}$, where $\lambda\in\mathbb{R}\setminus\mathbb{Q}$. Then the
hyperplane $H\subset\mathbb{C}\mathbb{P}^{3}$ given by $\Gamma\cap
\mathbb{C}^{3}=\{z=0\}$ is invariant by the foliation $\mathcal{F}$ induced by
$X$ in $\mathbb{C}\mathbb{P}^{3}$. Moreover: (1) the restriction }$\left.
\mathrm{\mathcal{F}}\right\vert $\textrm{$_{H}$ is biholomorphically
equivalent to the \textquotedblleft radial foliation\textquotedblright%
\ induced in $\mathbb{C}\mathbb{P}^{2}$ by the vector field $\vec
{R}(x,y)=x\frac{\partial}{\partial x}+y\frac{\partial}{\partial y}$. (2) On
the other hand, since $\lambda$ is irrational, the holonomy groups of leaves
in $H$ are not finite. Thus, though algebraic, the leaves in $H$ do not fit
into the above framework. }
\end{example}

Now we extend the above notions to dimension $m\geq2$. For this sake, we shall
introduce some notation: Let $\mathcal{F}$ be a one-dimensional foliation with
a finite number of singularities in a complex manifold $M^{m}$ of dimension
$m\geq2$. Given a leaf $L$ of $\mathcal{F}$, we say that a leaf $L_{1}$ is in
\textit{Siegel pairing} with $L$ if there is a singularity $p\in\overline
{L}\cap\overline{L_{1}}$ such that $L$ and $L_{1}$ induce distinct
separatrices $S(L)_{p}$ and $S(L_{1})_{p}$ at $p$, which are of Siegel pair
type. Then we denote by $\Gamma_{1}(L)$ the union of $L$ and all such leaves
$L_{1}$ which are in Siegel pairing with $L$. In the same way, we consider all
leaves which are in Siegel pairing with the leaves in $\Gamma_{1}(L)$ and its
union will be denoted by $\Gamma_{2}(L)$. Since the singular set of
$\mathcal{F}$ is finite, we may proceed this way and by induction we obtain a
subset $\Gamma(L)\subset M$, which shall be called the \textit{Siegel
component} of $L$.

\begin{definition}
[graph in complex manifolds]\label{Definition:stablegraphanydimension} Suppose
all the singularities of $\mathcal{F}$ are non-degenerate. A graph of
$\mathcal{F}$ is an invariant compact connected analytic subset $\Gamma\subset
M$ of pure-dimension one such that $\Gamma=\Gamma(L)$, the Siegel component of
some leaf $L$ of $\mathcal{F}$. The graph $\Gamma\subset M$ of $\mathcal{F}$
is called stable if each virtual holonomy group of $\Gamma$ is finite in the
sense of Definition~\ref{Definition:finitevirtualholonomy}.
\end{definition}

In particular, all the Siegel type singularities of $\mathcal{F}$ in $\Gamma$
are resonant and holonomy-linearizable.

\section{One-dimensional foliations in complex projective spaces}

Since the Cousin multiplicative problem always admits a solution in
$\mathbb{C}^{m+1}\setminus\{0\},\,m\geq2$, a one-dimensional holomorphic
foliation with singularities $\mathcal{F}$ in $\mathbb{C}\mathbb{P}^{m}$ is
always defined in any affine space $\mathbb{C}^{m}\subset\mathbb{C}%
\mathbb{P}^{m}$ by a polynomial vector field with isolated singularities. From
now on, by \emph{foliation} we shall mean a one-dimensional holomorphic
foliation with singularities. In this paper, by an \textit{algebraic leaf} we
mean a leaf $L\subset\mathbb{C}\mathbb{P}^{m}$ of the (non-singular)
foliation, such that the closure $\overline{L}\subset\mathbb{C}\mathbb{P}^{m}$
is algebraic of dimension one. Equivalently, $L$ is contained in an algebraic
curve. In this case, $\Lambda(L):=\overline{L}$ is an algebraic invariant
curve and we have $\Lambda(L)\setminus L=\overline{L}\cap\operatorname{Sing}%
(\mathcal{F})\subset\operatorname{Sing}(\mathcal{F})$. Given a foliation
$\mathcal{F}$ in $\mathbb{C}\mathbb{P}^{m}$, we denote by $\Omega
(\mathcal{F})$ the \emph{collection} of all algebraic leaves of $\mathcal{F}$.
This is therefore a collection of leaves of $\mathcal{F}$, not a subset of
$\mathbb{C}\mathbb{P}^{m}$. Given a leaf $L\subset\operatorname*{Sep}%
(\mathcal{F})$, we denote by $\Lambda=\Lambda(L)$ the algebraic curve
$\overline{L}\subset L\cup\operatorname{Sing}(\mathcal{F})\subset
\mathbb{C}\mathbb{P}^{m}$, i.e., the corresponding algebraic curve containing
the leaf $L$. Then we shall denote by $\Omega(\mathcal{F})\subset
\mathbb{C}\mathbb{P}^{m}$ the union of all such algebraic curves $\Lambda(L)$
with $L\in\operatorname*{Sep}(\mathcal{F})$.

Recall that a Leaf $L$ of $\mathcal{F}$ is \textit{quasi-compact} if it is
closed off the singular set of the $\mathcal{F}$, \textit{i.e.,} $\overline
{L}\setminus L\subset\operatorname{Sing}(\mathcal{F})$. The following simple
criterion will be useful for our intents:

\begin{lemma}
\label{Lemma:Remmert-steinleaf} A leaf $L$ of a one-dimensional holomorphic
foliation $\mathcal{F}$ in $\mathbb{C}\mathbb{P}^{m}$ is algebraic if and only
if it is quasi-compact.
\end{lemma}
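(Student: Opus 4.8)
The plan is to prove both implications, the forward direction being essentially Chow's theorem and the reverse being the content that requires work.

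First I would establish the easy direction: if $L$ is algebraic, then by definition the closure $\overline{L}\subset\mathbb{C}\mathbb{P}^{m}$ is an algebraic curve of dimension one, and $\Lambda(L)\setminus L=\overline{L}\cap\operatorname{Sing}(\mathcal{F})\subset\operatorname{Sing}(\mathcal{F})$. Hence $\overline{L}\setminus L\subset\operatorname{Sing}(\mathcal{F})$, which is precisely the statement that $L$ is quasi-compact (closed off the singular set). This direction is immediate from the definitions given just above the lemma.

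For the converse, suppose $L$ is quasi-compact, so that $\overline{L}\setminus L\subset\operatorname{Sing}(\mathcal{F})$. The key observation is that $\operatorname{Sing}(\mathcal{F})$ is a finite set (the foliation has isolated singularities in the projective space), hence a zero-dimensional analytic subset. The plan is to invoke the Remmert--Stein extension theorem: since $L$ is a pure one-dimensional analytic subset of the open set $\mathbb{C}\mathbb{P}^{m}\setminus\operatorname{Sing}(\mathcal{F})$, and it accumulates only on the lower-dimensional (in fact zero-dimensional) set $\operatorname{Sing}(\mathcal{F})$, the closure $\overline{L}=L\cup(\overline{L}\cap\operatorname{Sing}(\mathcal{F}))$ is a pure one-dimensional analytic subset of the compact manifold $\mathbb{C}\mathbb{P}^{m}$. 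This uses exactly the Remmert--Stein mechanism already cited in Definition~\ref{Definition:stablealgebraicleaf} for the one-dimensional case.

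Finally, having produced $\overline{L}$ as a closed one-dimensional analytic subset of the projective space $\mathbb{C}\mathbb{P}^{m}$, I would apply Chow's theorem: every analytic subset of complex projective space is algebraic. Therefore $\overline{L}$ is an algebraic curve of dimension one, so $L$ is contained in an algebraic curve, i.e. $L$ is an algebraic leaf in the sense defined above. I expect the main (and only nontrivial) obstacle to be the careful verification of the hypotheses of Remmert--Stein, namely confirming that $\overline{L}\setminus L$ is contained in an analytic set of strictly smaller dimension than $L$; since $\operatorname{Sing}(\mathcal{F})$ is finite this is clear, but one must be sure the singular set is genuinely discrete in the projective setting, which follows from the fact that a foliation in $\mathbb{C}\mathbb{P}^{m}$ is given in each affine chart by a polynomial vector field with isolated singularities.
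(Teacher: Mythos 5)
Your proof is correct and follows essentially the same route as the paper's: both hinge on the Remmert--Stein extension theorem (a quasi-compact leaf, being closed and pure one-dimensional in $\mathbb{C}\mathbb{P}^{m}\setminus\operatorname{Sing}(\mathcal{F})$ and accumulating only on the zero-dimensional set $\operatorname{Sing}(\mathcal{F})$, has analytic closure of pure dimension one) combined with Chow's theorem (analytic subsets of $\mathbb{C}\mathbb{P}^{m}$ are algebraic). The paper simply packages the argument as a chain of equivalences rather than two separate implications, so the substance of your argument is identical.
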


\begin{proof}
According to a theorem of Chow (\cite{Gunning-Rossi}), complex analytic
subsets of projective spaces are indeed algebraic subsets. Therefore, a leaf
$L\subset\mathbb{C}\mathbb{P}^{m}$ is algebraic if and only if its closure
$\overline{L}\subset\mathbb{C}\mathbb{P}^{m}$ is an analytic subset of
dimension one. In addition, by the classical extension theorem of Remmert and
Stein (\cite{Gunning-Rossi}), the closure $\overline{L}\subset\mathbb{C}%
\mathbb{P}^{m}$ is analytic of dimension one if and only if $\overline
{L}\setminus L$ is contained in a analytic subset of dimension zero.
Therefore, a leaf $L$ of $\mathcal{F}$ is algebraic if and only if
$\overline{L}\setminus L\subset\operatorname{Sing}(\mathcal{F})$.
\end{proof}

\subsection{Algebraic leaves and finite order}

\begin{definition}
[\cite{scarduaIndagationes}]\textrm{Let $\mathcal{F}$ be a foliation of
codimension $k$ in a manifold $M$ (perhaps non-compact). A \textit{compact
total transverse section} of $\mathcal{F}$ is a compact $k$-manifold $T\subset
M$ (possibly with boundary) such that every leaf of $\mathcal{F}$ intersects
the interior of $T$.}
\end{definition}

\begin{lemma}
[\cite{scarduaIndagationes}]\label{Lemma:section} Let $\mathcal{F}$ be a
holomorphic foliation of dimension $1$ in $\mathbb{C}\mathbb{P}^{m}$ with
(finite) singular set $\operatorname{Sing}(\mathcal{F})\subset\mathbb{C}%
\mathbb{P}^{m}$. There exists a finite collection of immersed closed discs
$\overline{D_{j}}\subset\mathbb{C}\mathbb{P}^{m},\,j=1,...,r$ pairwise
disjoint such that\textrm{:} \textrm{(i)} $\overline{D_{j}}\approx
\{z\in{\mathbb{C}}^{n-1}:\left\vert z\right\vert \leq1\}$ and $D_{j}%
=\overline{D_{j}}\setminus\partial\overline{D_{j}}$ is transverse to
$\mathcal{F}$. \textrm{(ii)} Each leaf of $\mathcal{F}$ intersects at least
one of the open discs $D_{j}$. In other words, the manifold with boundary
$T=\overline{D_{1}}\cup\cdots\cup\overline{D_{r}}$ is a compact total
transverse section to $\mathcal{F}$.
\end{lemma}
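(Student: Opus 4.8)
The plan is to adapt the complete-transversal construction of Claim~\ref{Claim:completetransversal} to the present singular situation; the two genuinely new ingredients are controlling the leaves near the finite singular set (so that a finite cover of the regular part suffices) and, above all, achieving \emph{pairwise disjointness} of the discs.

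First I would dispose of the singularities. Fix pairwise disjoint coordinate balls $B(q;\varepsilon)$, $q\in\operatorname{Sing}(\mathcal F)$, and set $K=\mathbb{CP}^m\setminus\bigcup_q B(q;\varepsilon/2)$, a compact subset of the regular part $\mathbb{CP}^m\setminus\operatorname{Sing}(\mathcal F)$. The key point is that \emph{every leaf meets $K$}. Suppose a leaf $L$ were contained in $\bigcup_q B(q;\varepsilon/2)$; by connectedness $L\subset B(q;\varepsilon/2)$ for a single $q$, so $\overline L$ is relatively compact in the chart $B(q;\varepsilon)$. Writing $u(z)=\|z-q\|^2$, the restriction $u|_L$ is subharmonic (since $u$ is strictly plurisubharmonic and $L$ is a holomorphic curve), and $u$ attains its supremum over the compact set $\overline L$ at some $z^\ast\in\overline L$. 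If $z^\ast=q$ then $L$ is empty, which is absurd; if $z^\ast$ is a regular point, the maximum principle forces $u$ to be constant along the plaques of $L$ accumulating at $z^\ast$, i.e. a non-constant holomorphic curve lying in a sphere, which is impossible. Hence $\overline L\setminus L\subset\{q\}$, and by Remmert--Stein (used exactly as in Lemma~\ref{Lemma:Remmert-steinleaf}) $\overline L$ is a compact one-dimensional analytic subset of the ball, again impossible by the maximum principle. Thus no leaf is trapped, and every leaf meets $K$.

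Next I would build the transversal over $K$. By local triviality of $\mathcal F$ at each regular point, exactly as in Claim~\ref{Claim:completetransversal}, every $x\in K$ admits a flow-box neighbourhood and a small transverse $(m-1)$-disc $D_x\approx\{|z|\le 1\}$ centered at $x$ whose interior has open saturation $\operatorname{Sat}(\operatorname{int}D_x)\ni x$. These saturations form an open cover of the compact set $K$, and a finite subcover yields discs $D_1,\dots,D_N$ such that every leaf meeting $K$ — hence, by the previous paragraph, every leaf — crosses the interior of some $D_j$. This already produces a finite complete transversal $\bigcup_j\overline{D_j}$; what remains is disjointness.

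The disjointness is the main obstacle. Because two transverse sections have complex codimension one, a dimension count shows that for $m\ge 2$ they cannot be separated by a small perturbation, so disjointness must instead be produced by relocating the discs. I would proceed inductively: keep $T_1=\overline{D_1}$, and at the $k$-th step retain those points of $D_{k+1}$ whose leaves are not yet caught by $T_1,\dots,T_k$ (the points already saturated form an open subset of $D_{k+1}$, so the uncaught part is a compact subset), cover this part by finitely many small subdiscs, and push each subdisc along the leaves, using the flow-box flow, into a region disjoint from $T_1\cup\cdots\cup T_k$. Sliding a transverse disc along the leaves preserves the set of leaves it meets, so the covering property is untouched, while the target region can be reached because each leaf involved exits every small ball (by the trapping argument above) whereas the previously chosen discs occupy only a compact set. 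The process adds finitely many discs at each of the $N$ steps and therefore terminates, yielding a finite family of pairwise disjoint immersed closed $(m-1)$-discs whose union $T$ meets every leaf, i.e. the desired compact total transverse section.
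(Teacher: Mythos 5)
Your construction is, at its core, the same as the paper's: excise small polydisc neighborhoods of the finitely many singularities, observe that the complement $K$ is compact, cover the regular part by flow boxes each carrying a transverse disc whose interior has open saturation, and extract a finite subcover of $K$. The one non-trivial point in that scheme --- that every leaf must meet $K$ --- is dispatched by the paper in a single line (``a leaf cannot remain in a polydisc, for it cannot be bounded in the affine space''), whereas you prove it honestly, restricting a strictly plurisubharmonic function to the leaf and combining the maximum principle with Remmert--Stein; your case analysis is slightly redundant (the maximum principle should simply be applied to the plaque of the leaf through the maximum point $z^{\ast}$ of $\overline{L}$, which lies in the invariant set $\overline{L}$), but the argument is correct. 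Where you genuinely add to the paper is the pairwise disjointness of the discs: the statement requires it, the paper's proof never addresses it, and you rightly note that in complex codimension one it cannot be obtained by perturbation. Your relocation-by-holonomy argument does work: transporting a small transverse disc along paths in the leaves preserves the set of leaves it meets, and a landing site disjoint from the compact union $C$ of previously chosen discs always exists. Two details should be tightened, though. First, the ``uncaught'' set is closed in $D_{k+1}$, hence compact only if you work in the closed disc $\overline{D_{k+1}}$ (harmless, since transversality holds up to the boundary as in Claim~\ref{Claim:completetransversal}). Second, the correct reason a landing site exists is not that ``the leaf exits every small ball'': it is that no leaf can be contained in $C$ (each leaf is transverse to every disc composing $C$, so $L\cap C$ is discrete in $L$), hence $L$ has points in the open set $\mathbb{C}\mathbb{P}^{m}\setminus C$, and a sufficiently small transported disc centered at such a point stays in that open set; moreover the finitely many discs created within one step must also be made disjoint from one another, which the same argument yields if you place them sequentially, enlarging $C$ by each newly placed disc before positioning the next.
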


\begin{proof}
Denote by $\{p_{1},\cdots,p_{r}\}$ the singular set of $\mathcal{F}$. Choose
small neighborhoods $U_{j}\ni p_{j}$ diffeomorphic to polydiscs $\{z\in
{\mathbb{C}}^{n}:\left\vert z\right\vert \leq1\}$ centered at $p_{j}$. Then
$X:=\mathbb{C}\mathbb{P}^{m}\setminus\cup_{j=1}^{r}U_{j}$ is compact.
Therefore, there exists an open cover $\cup_{\alpha\in A}U_{\alpha}%
=\mathbb{C}\mathbb{P}^{n}\setminus\operatorname{Sing}(\mathcal{F})$ by
distinguished neighborhoods $U_{\alpha}$, such that in each $U_{\alpha}$ we
have an embedded disc $\Sigma_{\alpha}\approx\mathbb{D}$ transverse to
$\left.  \mathcal{F}\right\vert _{U_{\alpha}}$ satisfying the following
property: if $p\in U_{\alpha}$, then $L_{p}\cap\Sigma_{\alpha}\neq\emptyset$.
Since $X$ is compact, there exists a finite subcovering $X\subset
U_{\alpha_{1}}\cup\cdots\cup U_{\alpha_{\ell}}$. In particular, for any $p\in
X$, the leaf $L_{p}$ intersects some $\Sigma_{\alpha_{j}}$, $j\in
\{1,\ldots,\ell\}$. On the other hand, a leaf cannot remain in a polydisc
$U_{j}$ for it cannot be bounded in the affine space. Therefore, any leaf
intersects $X$ and thus the interior of some $\Sigma_{j}$.
\end{proof}

Therefore, Lemmas ~\ref{Lemma:transvuniform} and \ref{Lemma:section} motivate
the following:

\begin{definition}
[relative order of a leaf]\label{Definition:fininteorder} \textrm{Let
$\mathcal{F}$ be a one-dimensional foliation in $\mathbb{C}\mathbb{P}^{m}$
with finite singular set $\operatorname{Sing}(\mathcal{F})\subset
\mathbb{C}\mathbb{P}^{m}$. We say that a leaf $L\in\mathcal{F}$ has
\textit{finite order} if for some compact total transverse section
$T\subset\mathbb{C}\mathbb{P}^{m}$ to $\mathcal{F}$ the intersection $L\cap T$
is a finite set. Fixed such section $T$, the number of intersection points
between }$\mathrm{L}$\textrm{ and }$\mathrm{T}$ \textrm{is called the
\textit{relative order of }}$\mathrm{\mathit{L}}$\textrm{\textit{ with respect
to }}$\mathrm{\mathit{T}}$\textrm{\textit{ and denoted by }$\operatorname{ord}%
(L,T)$, i.e., $\operatorname{ord}(L,T)=\#(L\cap T)$.}
\end{definition}

\begin{remark}
\textrm{Even though Lemma~\ref{Lemma:transvuniform} gives some intrinsic
character to the notion of order, it is not clear \textit{a priori} that this
is really intrinsic. For this reason, we shall \textit{not} consider the
notion of \textit{order} of a leaf $L$ of $\mathcal{F}$. Nevertheless, for our
purposes it is enough to observe the following: from
Lemma~\ref{Lemma:transvuniform}, given a leaf $L$ of $\mathcal{F}$, a point
$q\in L$ and a transverse disc $\Sigma$ to $\mathcal{F}$ centered at a point
$q\in L$ we have (see the proof of Claim~\ref{Claim:finiteordercompact})
\[
\#(L\cap\Sigma)\leq\operatorname*{ord}(L,\mathcal{F}).
\]
}
\end{remark}

\begin{lemma}
\label{Lemma:finiteorderisalgebraic} Given a holomorphic foliation
$\mathcal{F}$ of dimension $k$ in $\mathbb{C}\mathbb{P}^{m}$ and a compact
total transverse section $T\subset\mathbb{C}\mathbb{P}^{m}$ to $\mathcal{F}$,
a leaf $L\in\mathcal{F}$ is algebraic if and only if it has finite
\textrm{(}relative\textrm{)} order (i.e., $\operatorname{ord}(L,T)<\infty$).
More precisely, we have $\Omega(\mathcal{F})=\bigcup\limits_{n\in\mathbb{N}%
}\Omega(\mathcal{F},T,n)$, where%
\[
\Omega(\mathcal{F},T,n):=\{L\in\mathcal{F}:\#(L\cap T)\leq n\}.
\]

\end{lemma}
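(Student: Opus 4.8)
The plan is to reduce the statement to the characterization of algebraic leaves as quasi-compact leaves already obtained in Lemma~\ref{Lemma:Remmert-steinleaf}. By that lemma a leaf $L$ of $\mathcal{F}$ is algebraic if and only if $\overline{L}\setminus L\subset\operatorname{Sing}(\mathcal{F})$, so it suffices to prove that $L$ is quasi-compact if and only if $\operatorname{ord}(L,T)=\#(L\cap T)<\infty$. This is precisely the singular counterpart of Claim~\ref{Claim:finiteordercompact}, and I would model the argument on that proof, the new feature being that accumulation of $L$ is now allowed at the points of $\operatorname{Sing}(\mathcal{F})$, which lie off the transverse section $T$.

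For the direct implication, suppose $L$ is algebraic. Then $\overline{L}$ is a compact (algebraic) curve by Chow's theorem, and the construction of $T$ in Lemma~\ref{Lemma:section} places the discs $\overline{D_j}$ inside the regular set $\mathbb{C}\mathbb{P}^m\setminus\operatorname{Sing}(\mathcal{F})$, so that $T$ is a compact set lying at positive distance from $\operatorname{Sing}(\mathcal{F})$. Since $\overline{L}\setminus L\subset\operatorname{Sing}(\mathcal{F})$ is disjoint from $T$, this forces $L\cap T=\overline{L}\cap T$. Transversality of $T$ to $\mathcal{F}$ makes the intersection points isolated; hence $\overline{L}\cap T$ is a closed, discrete subset of the compact set $\overline{L}$, and therefore finite. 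This yields $\operatorname{ord}(L,T)<\infty$.

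For the converse I would argue by contraposition, exactly as in Claim~\ref{Claim:finiteordercompact}. If $L$ is not quasi-compact, there is an accumulation point $p\in\overline{L}\setminus L$ which is a regular point, $p\notin\operatorname{Sing}(\mathcal{F})$. Using a local trivialization of $\mathcal{F}$ about $p$, the leaf $L$ passes through infinitely many plaques clustering at $p$, so for an arbitrarily small transverse disc $\Sigma_p$ centered at $p$ one has $\#(L\cap\Sigma_p)=\infty$. Since $T$ is a total transverse section, the leaf $L_p$ through $p$ meets the interior of some disc $D_j$ at a point $q$; choosing a path in $L_p$ from $p$ to $q$ and the associated holonomy germ $h_\gamma\colon(\Sigma_p,p)\to(T,q)$ furnished by Lemma~\ref{Lemma:transvuniform}, one transports the infinitely many points of $L\cap\Sigma_p$ into infinitely many points of $L\cap T$, contradicting $\operatorname{ord}(L,T)<\infty$. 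Hence $L$ is quasi-compact, and algebraic by Lemma~\ref{Lemma:Remmert-steinleaf}. The displayed decomposition $\Omega(\mathcal{F})=\bigcup_{n\in\mathbb{N}}\Omega(\mathcal{F},T,n)$ is then immediate, since a leaf is algebraic exactly when $\#(L\cap T)\leq n$ for some $n\in\mathbb{N}$.

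The main technical point to secure is the holonomy transport in the converse: one must guarantee that the points of $L\cap\Sigma_p$ are genuinely carried to distinct points of $L\cap T$, which requires shrinking $\Sigma_p$ so that $h_\gamma$ is defined and injective on it and satisfies $h_\gamma(L\cap\Sigma_p)\subset L\cap T$, exactly as in the proof of Claim~\ref{Claim:finiteordercompact}. A secondary point is to ensure that the transverse intersections with $T$ stay isolated up to and including $\partial\overline{D_j}$; this is handled by realizing each $\overline{D_j}$ inside a slightly larger open transverse disc, so that transversality holds on a full neighborhood of $T$.
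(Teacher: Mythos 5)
Your proof is correct and follows essentially the same route as the paper's: both reduce to the quasi-compactness criterion of Lemma~\ref{Lemma:Remmert-steinleaf}, settle the forward direction via compactness of $\overline{L}$ together with $\overline{L}\cap T=L\cap T$, and obtain the converse by repeating the holonomy-transport argument from the proof of Claim~\ref{Claim:finiteordercompact}. The details you make explicit (that $T$ avoids $\operatorname{Sing}(\mathcal{F})$, that the transverse intersections are isolated, and that the holonomy germ is injective) are precisely the points the paper leaves implicit.
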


\begin{proof}
According to Lemma~\ref{Lemma:Remmert-steinleaf}, a leaf $L\subset
\mathbb{C}\mathbb{P}^{m}$ is algebraic if and only if $\overline{L}\setminus
L\subset\operatorname{Sing}(\mathcal{F})$. If $L$ is algebraic, then
$\overline{L}$ is a compact algebraic curve in $\mathbb{C}\mathbb{P}^{m}$.
Since $\overline{L}\cap T=L\cap T$ and $\overline{L}$ is compact, then
$\#(L\cap T)<\infty$. Therefore, every algebraic leaf has finite relative
order. Conversely, let $L\in\mathcal{F}$ be a leaf with finite relative order
with respect to $T$. In particular, by the same arguments used in the proof of
Claim~\ref{Claim:finiteordercompact}, the leaf $L$ is closed off the singular
set. Therefore, by the initial remark, $L$ is an algebraic leaf.
\end{proof}

Another simple but very useful remark is the following:

\begin{lemma}
\label{Lemma:boundedorderalgebraic} Let a leaf $L\in\mathcal{F}$ be such that
$L\subset{\mathcal{C}}_{\mu}(\Omega(\mathcal{F},T,k))$ for some $k\in
\mathbb{N}$ and some total transverse section $T\subset\mathbb{C}%
\mathbb{P}^{m}$. Then $L$ is algebraic.
\end{lemma}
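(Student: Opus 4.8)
The plan is to show, equivalently by Lemma~\ref{Lemma:finiteorderisalgebraic}, that $L$ has finite relative order, i.e.\ $\#(L\cap T)<\infty$; by that lemma this is exactly what it means for $L$ to be algebraic. I argue by contradiction, assuming $\#(L\cap T)=\infty$. Since $T$ is compact and transverse to $\mathcal{F}$ (so $T\cap\operatorname{Sing}(\mathcal{F})=\emptyset$), the infinite set $L\cap T$ accumulates at a non-singular point $p_\infty\in T$, and hence there is a small transverse disc $\Sigma$ centered at $p_\infty$ with $\#(L\cap\Sigma)=\infty$, the intersection points accumulating at $p_\infty\in\operatorname{int}(\Sigma)$.

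The heart of the argument is a holonomy-transport (transverse uniformity) lower bound on intersection numbers of nearby leaves. I pick $k+1$ pairwise distinct points $z_0,z_1,\dots,z_k\in L\cap\operatorname{int}(\Sigma)$, which is possible since $L\cap\Sigma$ is infinite and accumulates in the interior. As all $z_i$ lie on the single leaf $L$, for each $i$ a path in $L$ from $z_0$ to $z_i$ determines, via Lemma~\ref{Lemma:transvuniform}, a holonomy germ $h_i\colon(\Sigma,z_0)\to(\Sigma,z_i)$ with $h_i(z_0)=z_i$ and $h_i(w)\in L_w\cap\Sigma$ for every $w$ close to $z_0$ (here $h_0=\operatorname{id}$). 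I then choose a neighborhood $U\ni z_0$ in $\Sigma$ on which $h_0,\dots,h_k$ are all defined with pairwise disjoint images (possible since the $z_i$ are distinct and there are finitely many maps). For every $w\in U$ the points $h_0(w),\dots,h_k(w)$ are $k+1$ distinct points of $L_w\cap\Sigma$, so $\#(L_w\cap\Sigma)\ge k+1$; by the order inequality $\#(L_w\cap\Sigma)\le\#(L_w\cap T)$ recorded after Definition~\ref{Definition:fininteorder} this gives $\#(L_w\cap T)\ge k+1$ for every $w\in U$. Equivalently, no leaf meeting $U$ belongs to $\Omega(\mathcal{F},T,k)$.

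Finally I transfer this to a contradiction with the measure-concentration hypothesis. Taking a foliated flow-box $V\subset\mathbb{C}\mathbb{P}^{m}$ around $z_0$ whose plaques meet $\Sigma$ only inside $U$, every leaf intersecting $V$ meets $U$, hence lies outside $\Omega(\mathcal{F},T,k)$; thus $V\cap\Omega(\mathcal{F},T,k)=\emptyset$ and in particular $\operatorname{med}(V\cap\Omega(\mathcal{F},T,k))=0$. But $z_0\in L\subset\mathcal{C}_\mu(\Omega(\mathcal{F},T,k))$, so by Definition~\ref{definition:measureconcentration} every neighborhood of $z_0$ meets $\Omega(\mathcal{F},T,k)$ in positive measure, a contradiction. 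Hence $\#(L\cap T)<\infty$ and $L$ is algebraic.

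I expect the main difficulties to be bookkeeping rather than conceptual. First, the comparison $\#(L_w\cap\Sigma)\le\#(L_w\cap T)$ must be justified for the relevant leaves $L_w$ (this is the remark following Definition~\ref{Definition:fininteorder}, resting on transverse uniformity); it can be bypassed altogether by choosing $p_\infty$ in the interior of one of the discs $D_{j_0}$ constituting $T$, so that $\Sigma\subset T$ and the $k+1$ transported points already lie in $T$. Second, the passage from positive measure of the saturated set in $\mathbb{C}\mathbb{P}^{m}$ to emptiness of its trace on the flow-box is the standard Fubini argument in a foliation chart, already used implicitly in Claim~\ref{Claim:holonomyisfinite}. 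I note that, unlike in the proof of Theorem~A, this argument needs neither Burnside's theorem nor the finiteness of the holonomy of $L$: the conclusion follows purely from intersection-counting via holonomy together with the concentration hypothesis.
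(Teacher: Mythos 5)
Your proposal follows the same strategy as the paper's proof: argue by contradiction, locate an accumulation point and a transverse disc $\Sigma$ meeting $L$ infinitely often, transport $k+1$ of these intersection points by holonomy to force every leaf through an open set to have relative order greater than $k$, and contradict $L\subset\mathcal{C}_{\mu}(\Omega(\mathcal{F},T,k))$; your endgame (emptiness of $V\cap\Omega(\mathcal{F},T,k)$ in a flow box) is in fact cleaner than the paper's. However, there is a genuine gap at the step where you convert $\#(L_{w}\cap\Sigma)\geq k+1$ into $\#(L_{w}\cap T)\geq k+1$ --- and this conversion is unavoidable, since membership in $\Omega(\mathcal{F},T,k)$ is counted against the fixed section $T$, not against your auxiliary disc $\Sigma$. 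The inequality you invoke from the remark after Definition~\ref{Definition:fininteorder} is not actually established in the paper (the reference there to Claim~\ref{Claim:finiteordercompact} only yields the qualitative statement that infinite intersection with arbitrarily small discs forces infinite intersection with $T$), it is stated only for a disc centered at a point of the leaf in question (your $\Sigma$ is centered at $p_{\infty}$, which in general does not lie on $L_{w}$), and as a general counting inequality it is false: a leaf may meet an auxiliary transverse disc more often than it meets a fixed total transverse section. Already for a foliation by compact curves with $T$ a closed transversal met once by each leaf, a transverse disc that ``winds'' nearly twice around the transverse direction meets nearby leaves twice. Your proposed bypass --- choosing $p_{\infty}$ in the interior of some disc $D_{j_{0}}$ so that $\Sigma\subset T$ --- does not close the gap either, because the accumulation points of the infinite set $L\cap T$ may all lie on the boundary spheres $\partial\overline{D_{j}}$ of the discs furnished by Lemma~\ref{Lemma:section}, in which case no such choice exists.

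The missing ingredient is precisely the maneuver in the paper's proof. By the definition of a compact total transverse section, the leaf $L_{p_{\infty}}$ through the accumulation point meets the \emph{interior} of some disc $D_{j}\subset T$ at a point $q$; the holonomy germ $\Phi\colon(\Sigma,p_{\infty})\rightarrow(D_{j},q)$ along a path in $L_{p_{\infty}}$, given by Lemma~\ref{Lemma:transvuniform}, is injective and leaf-preserving. Choose your points $z_{0},\dots,z_{k}\in L\cap\Sigma$ inside the domain of a representative of $\Phi$ (possible, since they may be taken accumulating at $p_{\infty}$), and replace your maps $h_{i}$ by $\Phi\circ h_{i}\colon(\Sigma,z_{0})\rightarrow(D_{j},\Phi(z_{i}))$. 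These take pairwise distinct values at $z_{0}$, so for $w$ in a small neighborhood $U\ni z_{0}$ in $\Sigma$ the points $\Phi(h_{i}(w))$, $i=0,\dots,k$, are $k+1$ distinct points of $L_{w}\cap D_{j}\subset L_{w}\cap T$, which is the bound you need; the rest of your argument (disjointness of images, flow box, contradiction with measure concentration at $z_{0}\in L$) then goes through verbatim. This routing of the transported points into the interior of $T$ is exactly what the paper does with its composite $\Sigma_{p}\rightarrow\Sigma_{q_{\infty}}\rightarrow T_{j}$, and it is the one conceptual point your write-up dismisses as bookkeeping.
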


\begin{proof}
Suppose by contradiction that $L\subset{\mathcal{C}}_{\mu}(\Omega
(\mathcal{F},T,k))$ is not algebraic. Then Lemma~\ref{Lemma:Remmert-steinleaf}
says that $L$ is not closed off $\operatorname{Sing}(\mathcal{F})$. Therefore,
there is a non-singular accumulation point $q_{\infty}\in\overline{L}\setminus
L$, $q_{\infty}\notin\operatorname{Sing}(\mathcal{F})$. Given an
\textit{arbitrarily small} transverse disc $\Sigma_{q_{\infty}}$ to
$\mathcal{F}$ centered at $q_{\infty}$, we have $\#(L\cap\Sigma_{q_{\infty}%
})=\infty$. Then there is a disc $T_{j}\subset T$ such that $L_{q_{\infty}}$
meets $T_{j}$ at an interior point, say $q\in T_{j}$. Choose now a point $p\in
L$ and a transverse disc $\Sigma_{p}$ centered at $p$. By the Transverse
uniformity lemma (Lemma~\ref{Lemma:transvuniform}), there is a map from the
disc $\Sigma_{p}$ to $\Sigma_{q_{\infty}}$ and thus to the disc $T_{j}$. We
conclude that for any $w\in\Sigma_{p}$ close enough to $p$, we have
$\#(L_{w}\cap T_{j})\geq k!$. In particular, $L_{w}$ has order greater than
$k$. This shows that $\operatorname*{med}(\Sigma_{p}\cap\Omega(\mathcal{F}%
,T,k))=0$, yielding a contradiction to $L\subset\mathcal{C}_{\mu}%
(\Omega(\mathcal{F},T,k))$.
\end{proof}

We propose the following conjecture:

\begin{conjecture}
\label{Conjecture:stablegraphproperties} Let $\mathcal{F}$ be a holomorphic
foliation of dimension one with non-degenerate singularities in the complex
projective space $\mathbb{C}\mathbb{P}^{3}$ admitting a stable graph
$\Gamma(L)$, where $L\in\mathcal{F}$ is a stable algebraic leaf of
$\mathcal{F}$. Then all leaves are algebraic.
\end{conjecture}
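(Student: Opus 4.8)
The plan is to promote the local complete stability around the stable graph to a global statement, following the Darboux/Brunella--Nicolau philosophy but using the relative order of a leaf in place of the degree bound that is available only in dimension two. First I would establish the three-dimensional analogue of Proposition~\ref{Proposition:stablegraphstability}. By Definition~\ref{Definition:stablegraphanydimension} together with Lemma~\ref{Lemma:poincaresiegel}, every Siegel singularity of $\mathcal{F}$ lying in $\Gamma=\Gamma(L)$ is resonant, normal and holonomy-linearizable, so each such singularity carries a local monomial holomorphic first integral $f=x_{1}^{a_{1}}x_{2}^{a_{2}}x_{3}^{a_{3}}$. The finiteness of the virtual holonomy groups of $\Gamma$ permits the holonomy extension of these first integrals along each irreducible component, and the Dulac correspondence of Section~\ref{section:Dulac} glues them across the Siegel corners (the pairing being controlled by Lemma~\ref{Lemma:dulacaccumulation}), exactly as in the surface case. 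This should produce a fundamental system of invariant neighborhoods $W$ of $\Gamma$ on which $\mathcal{F}$ admits a holomorphic first integral and every leaf is quasi-compact, hence algebraic by Lemma~\ref{Lemma:Remmert-steinleaf}. In particular the collection $\Omega(\mathcal{F})$ of algebraic leaves contains the nonempty open set $W$, so $\operatorname{med}(\Omega(\mathcal{F}))>0$.

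Second, I would argue by an open-and-closed dichotomy on the connected set $N:=\mathbb{C}\mathbb{P}^{3}\setminus\operatorname{Sing}(\mathcal{F})$. Let $A\subset N$ be the union of the algebraic leaves; it is $\mathcal{F}$-invariant and, by the first step, has nonempty interior $U=\operatorname{Int}(A)$. Fix a compact total transverse section $T$ as in Lemma~\ref{Lemma:section}. By Lemma~\ref{Lemma:finiteorderisalgebraic} every leaf of $A$ has finite relative order, and the transverse uniformity Lemma~\ref{Lemma:transvuniform} controls how the order $\operatorname{ord}(L,T)=\#(L\cap T)$ of nearby leaves can vary, so on $W$ (where a holomorphic first integral exists) the order is locally bounded. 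It suffices to show $U=N$. Assuming $U\neq N$, I would pick a leaf $L_{0}$ meeting $\partial U$; it satisfies $L_{0}\subset\mathcal{C}_{\mu}(A)$, hence $L_{0}\subset\mathcal{C}_{\mu}(\Omega(\mathcal{F},T,k))$ for some $k$ \emph{provided the order stays bounded up to the boundary}, and then Lemma~\ref{Lemma:boundedorderalgebraic} would force $L_{0}$ to be algebraic. Feeding the finiteness of its (virtual) holonomy back through Reeb local stability (Lemma~\ref{Lemma:localstability}) and the Dulac correspondence at the non-degenerate singularities of $\overline{L_{0}}$ would then show, just as in the first step, that $L_{0}$ lies in the interior of $A$, contradicting $L_{0}\subset\partial U$ and yielding $U=N$.

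The hard part is precisely the uniform control of the relative order as one approaches $\partial U$. In Darboux's theorem the decisive input is that the invariant algebraic curves of a fixed vector field in $\mathbb{C}^{2}$ share a common degree bound, coming from the finite dimension of the space of $1$-forms of bounded degree; this is the mechanism that fails in $\mathbb{C}\mathbb{P}^{3}$, where the degrees of the invariant curves $\Lambda(L)=\overline{L}$ may a priori be unbounded, so that the integer $k$ in $\Omega(\mathcal{F},T,k)$ could blow up along a sequence of leaves accumulating on $\partial U$. Bridging this gap---either by producing a global meromorphic first integral on $\mathbb{C}\mathbb{P}^{3}$, whose level sets are algebraic by Chow's theorem and would give the conclusion at once, or by an a priori bound on $\operatorname{ord}(L,T)$ valid throughout $U$---is the genuine obstacle, and is the reason the statement is posed here only as a conjecture rather than as a theorem.
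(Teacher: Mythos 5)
This statement is posed in the paper as Conjecture~\ref{Conjecture:stablegraphproperties}; the paper offers no proof of it, so there is no argument of the authors to compare yours against, only the machinery they develop. Your proposal is a sensible attack along exactly the lines that machinery suggests, but it is not a proof, and you say so yourself: the open-and-closed dichotomy hinges on a uniform bound for the relative order $\operatorname{ord}(L,T)$ of the algebraic leaves as they accumulate on a boundary leaf $L_{0}\subset\partial U$. Without such a bound you cannot place $L_{0}$ inside $\mathcal{C}_{\mu}(\Omega(\mathcal{F},T,k))$ for a single $k$, so Lemma~\ref{Lemma:boundedorderalgebraic} does not apply and the ``closedness'' half of the dichotomy collapses. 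This is precisely the Jouanolou-type obstruction (a priori unbounded degrees of the invariant curves $\Lambda(L)$) that blocks the naive generalization of Darboux's theorem to $\mathbb{C}\mathbb{P}^{3}$, and it is the reason the statement is a conjecture. An argument that ends by conceding that its decisive step cannot be carried out is a description of the difficulty, not a proof of the statement.

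There is also a secondary gap in your first step. You invoke Lemma~\ref{Lemma:poincaresiegel} to conclude that every Siegel singularity on $\Gamma(L)$ is ``resonant, normal and holonomy-linearizable,'' but normality is not a conclusion of that lemma: it is an additional hypothesis, and in dimension three a non-degenerate Siegel singularity need not be normal (only strict Siegel singularities are guaranteed to admit adapted invariant coordinate hyperplanes). The paper is careful on this point: Theorem~C and Lemma~\ref{Lemma:virtualholonomygamma} obtain analytic linearization, hence local monomial first integrals, only under the extra assumption that the Siegel singularities are normal, while Definition~\ref{Definition:stablegraphanydimension} by itself yields only resonance and holonomy-linearizability. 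Without normality you do not get the local first integrals $f=x_{1}^{a_{1}}x_{2}^{a_{2}}x_{3}^{a_{3}}$, so the construction of the invariant neighborhood $W$ carrying a holomorphic first integral --- the ``open'' half of your dichotomy --- also does not go through as stated; you would need either to add the normality hypothesis to the conjecture or to find an argument that bypasses analytic linearization at the corners.
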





\section{Proof of the Algebraic Stability theorem}

In what follows, we consider the following situation: $\mathcal{F}$ is a
holomorphic foliation of dimension one with non-degenerate singularities in
the complex projective space $\mathbb{C}\mathbb{P}^{m}$ such that the set
$\Omega(\mathcal{F})$, union of all algebraic leaves of $\mathcal{F}$, has
positive measure. From Lemmas~\ref{Lemma:nondiscrete} and
~\ref{Lemma:invariant} the set $\mathcal{C}_{\mu}(\Omega(\mathcal{F}))$ is
invariant and contains some leaf, say $L_{0}\subset\mathcal{C}_{\mu}%
(\Omega(\mathcal{F}))$. However, this is not enough for our purposes, since we
cannot control the degree (or the order) of the algebraic leaves accumulating
at $L_{0}$. Actually, we may say more: from Lemma~\ref{Lemma:section}, we may
choose a compact total transverse section $T\subset\mathbb{C}\mathbb{P}^{m}$
to $\mathcal{F}$. Using then Lemmas \ref{Lemma:concentrationgraduation} and
\ref{Lemma:finiteorderisalgebraic} we immediately conclude that:

\begin{lemma}
\label{Lemma:concentrationleaf} Under the hypothesis of Theorem~C, there is a
leaf $L_{0}\in\mathcal{F}$ such that $L_{0}\subset{\mathcal{C}}_{\mu}%
(\Omega(\mathcal{F},T,k))\neq\emptyset$ for some $k\in\mathbb{N}$ and some
total transverse section $T\subset\mathbb{C}\mathbb{P}^{m}$.
\end{lemma}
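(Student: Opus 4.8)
The plan is to establish Lemma~\ref{Lemma:concentrationleaf} by assembling the ingredients already laid out in the excerpt, rather than constructing anything new. The hypothesis of Theorem~C gives $\operatorname*{med}(\Omega(\mathcal{F}))>0$, where $\Omega(\mathcal{F})$ is the union of all algebraic leaves. First I would invoke Lemma~\ref{Lemma:section} to fix a compact total transverse section $T\subset\mathbb{C}\mathbb{P}^{m}$ to $\mathcal{F}$. The key structural fact is Lemma~\ref{Lemma:finiteorderisalgebraic}, which gives the decomposition $\Omega(\mathcal{F})=\bigcup_{n\in\mathbb{N}}\Omega(\mathcal{F},T,n)$, expressing the set of algebraic leaves as a countable union of the sets of leaves meeting $T$ in at most $n$ points.

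The heart of the argument is then a measure-theoretic pigeonhole. Since $\operatorname*{med}(\Omega(\mathcal{F}))>0$ and $\Omega(\mathcal{F})$ is a countable union of the $\Omega(\mathcal{F},T,n)$, the additivity observation recorded at the start of Section~4 (that a countable union has positive measure if and only if some term does) yields an integer $k\in\mathbb{N}$ with $\operatorname*{med}(\Omega(\mathcal{F},T,k))>0$. Having fixed such a $k$, I would then produce the desired concentration leaf: the contrapositive of Lemma~\ref{Lemma:concentrationinside} (or directly Lemma~\ref{Lemma:concentrationgraduation}) shows that a set of positive measure must meet its own set of measure concentration points, so $\mathcal{C}_{\mu}(\Omega(\mathcal{F},T,k))\neq\emptyset$ and in fact $\Omega(\mathcal{F},T,k)\cap\mathcal{C}_{\mu}(\Omega(\mathcal{F},T,k))\neq\emptyset$.

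It remains to upgrade the existence of a concentration \emph{point} to the existence of a concentration \emph{leaf}. Here I would apply Lemma~\ref{Lemma:invariant}: the set $\Omega(\mathcal{F},T,k)$ is $\mathcal{F}$-invariant, being a union of leaves, so $\mathcal{C}_{\mu}(\Omega(\mathcal{F},T,k))$ is also $\mathcal{F}$-invariant. Picking any nonsingular point $p$ in this nonempty invariant set, the entire leaf $L_{0}:=L_{p}$ is contained in $\mathcal{C}_{\mu}(\Omega(\mathcal{F},T,k))$, which is exactly the assertion $L_{0}\subset\mathcal{C}_{\mu}(\Omega(\mathcal{F},T,k))$.

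The only genuine subtlety — and the one point I would treat carefully — is ensuring the concentration point can be taken to be a nonsingular point lying on an honest leaf, rather than an element of $\operatorname*{Sing}(\mathcal{F})$. Since $\operatorname*{Sing}(\mathcal{F})$ is a finite (hence zero-measure and discrete) set, and since by Lemma~\ref{Lemma:nondiscrete} the set $\mathcal{C}_{\mu}(\Omega(\mathcal{F},T,k))$ has no isolated points, this concentration set cannot be reduced to singular points alone and must contain nonsingular points; this guarantees the leaf $L_{0}$ is well-defined. Everything else is a direct citation of the preceding lemmas, so I expect no computational obstacle, merely this bookkeeping about the singular set.
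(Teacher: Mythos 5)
Your proposal is correct and follows essentially the same route as the paper: fix $T$ by Lemma~\ref{Lemma:section}, decompose $\Omega(\mathcal{F})=\bigcup_{n}\Omega(\mathcal{F},T,n)$ via Lemma~\ref{Lemma:finiteorderisalgebraic}, apply the countable-union pigeonhole (Lemma~\ref{Lemma:concentrationgraduation}, equivalently Lemma~\ref{Lemma:concentrationinside}) to find $k$ with a nonempty concentration set, and then use the invariance of $\mathcal{C}_{\mu}(\Omega(\mathcal{F},T,k))$ (Lemma~\ref{Lemma:invariant}) together with Lemma~\ref{Lemma:nondiscrete} to promote a concentration point to a whole leaf $L_{0}$. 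Your explicit treatment of why the concentration point can be taken nonsingular is a detail the paper leaves implicit, but it is the same argument.
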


From now on, we assume the dimension is $m=3$. In this case, a Siegel
singularity $p\in\operatorname{Sing}(\mathcal{F})$ exhibits a distinguished
separatrix, also called \textit{non-dicritical} separatrix, and denote by
$S_{p}$. Using then Lemmas~\ref{Lemma:poincaresiegel} and
\ref{Lemma:dulacaccumulation} we obtain:

\begin{lemma}
\label{Lemma:separatrixhasfiniteholonomy} Assume the dimension is $m=3$. Let
$p\in{\mathcal{C}}_{\mu}{(\Omega(\mathcal{F},T,k))}\cap\operatorname{Sing}%
(\mathcal{F})$ be a Siegel type singularity, which is a measure concentration
point of $\Omega(\mathcal{F},T,k)$. Then the singularity of $\mathcal{F}$ at
$p$, say $\mathcal{F}_{p}$, is resonant. Moreover, the non-dicritical (i.e.,
the distinguished) separatrix $S_{p}$ of $\mathcal{F}$ through $p$ is also
contained in the set of measure concentration points of $\Omega(\mathcal{F}%
,T,k)$ and has finite holonomy map with respect to the singularity
$\mathcal{F}_{p}$. In particular, if $\mathcal{F}_{p}$ is normal, then it is a
resonant analytically linearizable singularity admitting holomorphic first integrals.
\end{lemma}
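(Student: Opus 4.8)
The plan is to reduce Lemma~\ref{Lemma:separatrixhasfiniteholonomy} to the machinery already assembled in Sections~\ref{section:siegelpoincare} and~\ref{section:Dulac}, exploiting the hypothesis that $p$ is a measure-concentration point of the bounded-order set $\Omega(\mathcal{F},T,k)$. The overall strategy is: first pin down the analytic type of the singularity (resonant, linearizable) via the Poincar\'e/Siegel dichotomy; then propagate the measure-concentration property along the distinguished separatrix $S_p$; and finally deduce finiteness of the local holonomy from the earlier lemmas on bounded-order periodic groups.

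**First step: the singularity is in the Siegel domain and resonant.** By hypothesis $p$ is already assumed to be of Siegel type, so the Poincar\'e alternative is excluded. To see resonance, I would observe that $p\in\mathcal{C}_\mu(\Omega(\mathcal{F},T,k))$ forces the set of algebraic (hence closed-off-the-singular-set) leaves near $p$ to have positive measure; by Lemma~\ref{Lemma:finiteorderisalgebraic} these are exactly the leaves of finite relative order, and they cannot all lie in the coordinate hyperplanes. Consequently each holonomy map $h_j$ associated to a coordinate axis has a positive-measure set of periodic points, and Lemma~\ref{bounded positive measure} makes each $h_j$ periodic. As in the proof of Lemma~\ref{Lemma:poincaresiegel}, periodicity of the linear parts $Dh_j(0)=\exp(2\pi\sqrt{-1}\,\lambda_i/\lambda_j)$ forces the eigenvalue ratios $\lambda_i/\lambda_j$ to be rational, so $\mathcal{F}_p$ is resonant.

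**Second step: transport concentration to $S_p$.** The key geometric input is that the distinguished (non-dicritical) separatrix $S_p$ is in Siegel pairing with the dicritical separatrices lying in the invariant plane spanned by the two like-signed eigenvectors. Since $p\in\mathcal{C}_\mu(\Omega(\mathcal{F},T,k))$ and $p\in\overline{S_p}$, the accumulation of positive-measure families of bounded-order leaves at $p$ is, via the Dulac correspondence $\mathcal{D}_{1,\Gamma}$ of \S\ref{section:Dulac}, transferred to transverse sections meeting $S_p$ away from $p$. This is precisely the content of Lemma~\ref{Lemma:dulacaccumulation}: for a Siegel pair $S_p,S_p'$ one has $S_p\subset\mathcal{C}_\mu(\mathcal{X}(\mathcal{F}))$ iff $S_p'\subset\mathcal{C}_\mu(\mathcal{X}(\mathcal{F}))$, applied with $\mathcal{X}(\mathcal{F})=\Omega(\mathcal{F},T,k)$. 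I would therefore invoke Lemma~\ref{Lemma:dulacaccumulation} to conclude $S_p\subset\mathcal{C}_\mu(\Omega(\mathcal{F},T,k))$.

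**Third step and the main obstacle.** Once $S_p\subset\mathcal{C}_\mu(\Omega(\mathcal{F},T,k))$ is established, finiteness (periodicity) of the local holonomy of $S_p$ is immediate from Lemma~\ref{Lemma:finitelocalholonomy}, whose hypothesis $S\subset\mathcal{C}_\mu(\Omega(\mathcal{F},\Sigma,k))$ is met after identifying the relative-order section $T$ with a local transverse disc $\Sigma$ through the remark following Definition~\ref{Definition:fininteorder} (namely $\#(L\cap\Sigma)\le\operatorname{ord}(L,T)$). The final clause, that a \emph{normal} resonant $\mathcal{F}_p$ is analytically linearizable with holomorphic first integrals, then follows from Lemma~\ref{Lemma:linearizingSiegel}: periodicity of the distinguished-separatrix holonomy means it is holonomy-linearizable, and the equivalence (i)$\Leftrightarrow$(ii) there upgrades this to analytic linearizability, after which the linear resonant normal form $\sum\lambda_j x_j\,\partial/\partial x_j$ with rational $\lambda_j$ carries the evident monomial first integral. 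The delicate point I expect to be the genuine obstacle is the second step: one must be sure that the Dulac correspondence is well defined on a \emph{positive-measure} set of nearby leaves and that it preserves the order bound $k$ (up to a controlled factor), so that concentration of $\Omega(\mathcal{F},T,k)$ at $p$ really yields concentration of a bounded-order family along $S_p$, rather than leaves whose order blows up under the transition map.
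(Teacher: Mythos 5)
Your skeleton (resonance, transfer of the concentration property to $S_p$, then Lemma~\ref{Lemma:finitelocalholonomy} for finiteness and Lemma~\ref{Lemma:linearizingSiegel} for the normal case) matches the paper's intent, and your third step is essentially the paper's. But your second step --- the one you yourself flag as delicate --- has a genuine gap, and it is not the one you identify. Lemma~\ref{Lemma:dulacaccumulation} is an \emph{equivalence between the two separatrices of a Siegel pair}: it lets you conclude $S_p\subset\mathcal{C}_{\mu}(\mathcal{X}(\mathcal{F}))$ only if you already know $S_p'\subset\mathcal{C}_{\mu}(\mathcal{X}(\mathcal{F}))$ for some separatrix $S_p'$ paired with $S_p$. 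Your hypothesis is only that the single point $p$ lies in $\mathcal{C}_{\mu}(\Omega(\mathcal{F},T,k))$, and nothing in your argument converts point-concentration at $p$ into concentration along \emph{any} separatrix; the Dulac correspondence is defined on transversals to the separatrices, so some separatrix must already carry the concentration before that machinery can be started. The paper bridges exactly this gap with a different ingredient: the topological description of Siegel singularities (\cite{CKP}, \cite{Brunellasiegel}). Namely, an invariant set $X$ with $p\in\mathcal{C}_{\mu}(X)$ cannot be contained in a proper invariant analytic subvariety through $p$ (such a subvariety has zero measure, contradicting $p\in\mathcal{C}_{\mu}(X)$), so by the local structure of Siegel singularities $X$ must accumulate along some separatrix of $\mathcal{F}$ through $p$, giving concentration along that separatrix; only after this does the Dulac correspondence serve to pass between the members of a Siegel pair. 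Without this structural fact your second step does not go through.

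Two smaller points. First, the obstacle you do name --- that the Dulac correspondence might fail to preserve the order bound $k$ --- is not actually an issue: $\Omega(\mathcal{F},T,k)$ is a fixed invariant union of leaves (the bound refers to the global section $T$, not to local transversals), and the Dulac correspondence merely follows leaves, so membership in this set is preserved tautologically. Second, your first step (resonance via the coordinate-axis holonomies $h_j$, as in the proof of Lemma~\ref{Lemma:poincaresiegel}) implicitly assumes that the coordinate hyperplanes are invariant, i.e., that $p$ is \emph{normal}, whereas the lemma assumes normality only in its final clause. The paper avoids this by deriving resonance \emph{after} the transfer: once $S_p\subset\mathcal{C}_{\mu}(\Omega(\mathcal{F},T,k))$, Lemma~\ref{Lemma:finitelocalholonomy} makes the local holonomy of $S_p$ periodic, and periodicity of its linear part forces all eigenvalue ratios to be rational, which is the resonance claim.
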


\begin{proof}
Since each algebraic leaf is closed off the singular set, we have
$\Omega(\mathcal{F},T,k)\subset\Omega(\mathcal{F})\subset\Omega^{0}%
(\mathcal{F})$ in the notation of Lemma~\ref{Lemma:poincaresiegel}. The result
follows from Lemmas~\ref{Lemma:poincaresiegel} and
\ref{Lemma:dulacaccumulation} as well as the considerations in §§
\ref{subsubsection:poincare} and ~\ref{subsubsection:siegel}. In fact, since
it is (non-degenerate and) non-$\mu$-dicritical, $p$ is a singularity
necessarily in the Siegel domain. Then, for instance by the topological
description of Siegel type singularities (\cite{CKP}, \cite{Brunellasiegel})
we conclude that any positive measure invariant set $X$ that accumulates at
the singularity $p$, also accumulates at any non-dicritical separatrix of the
foliation through $p$: indeed, there are two possibilities; either $X$ is
contained in some proper invariant analytic variety $\Gamma(X)$, that contains
the singularity, or $X$ accumulates at some separatrix of $\mathcal{F}$
through $p$. In the first case $X$ has zero measure and, \textit{a fortiori},
we have $p\not \in \mathcal{C}_{\mu}(X)$. These considerations prove some
separatrix $S_{p}$ of $\mathcal{F}$ through $p$ is also contained in
$\mathcal{C}_{\mu}(\Omega(\mathcal{F},T,k))$.
Lemma~\ref{Lemma:finitelocalholonomy} then assures the finiteness of the
holonomy maps of the separatrices. This already says that $p$ is a resonant
holonomy-linearizable singularity.



\end{proof}

The next step is the linearization of all (normal) singularities, as follows:

\begin{lemma}
\label{Lemma:analyticallylinearizablesing} Let $p\in{\mathcal{C}}_{\mu
}{(\Omega(\mathcal{F},T,k))}\cap\operatorname*{Sing}(\mathcal{F})$ be a
non-degenerate singularity which is a measure concentration point of
$\Omega(\mathcal{F},T,k)$. Then $p\in\operatorname{Sing}(\mathcal{F})$ is
analytically linearizable of radial type or it is a holonomy-linearizable
singularity of resonant Siegel type, which is analytically linearizable in
case it is normal. Given a leaf $L_{0}\subset{\mathcal{C}}_{\mu}%
(\Omega(\mathcal{F},T,k))$ all the singularities $q\in\operatorname{Sing}%
(\mathcal{F})\cap\Gamma(L_{0})$ are analytically linearizable with rational eigenvalues.
\end{lemma}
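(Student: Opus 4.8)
The plan is to split the statement into its two assertions and reduce each to machinery already assembled, the standing hypothesis being $m=3$ with normal Siegel singularities. For the first assertion, about a single singularity $p$, I would first recall that finite-order leaves are algebraic (Lemma~\ref{Lemma:finiteorderisalgebraic}) and hence closed off the singular set, so that $\Omega(\mathcal{F},T,k)\subset\Omega^{0}(\mathcal{F})$ in the notation of Lemma~\ref{Lemma:poincaresiegel}. By the obvious monotonicity of $\mathcal{C}_{\mu}$ (if $A\subset B$ then $\mathcal{C}_{\mu}(A)\subset\mathcal{C}_{\mu}(B)$, immediate from the definition) this gives $p\in\mathcal{C}_{\mu}(\Omega^{0}(\mathcal{F}))$, so Lemma~\ref{Lemma:poincaresiegel} applies verbatim and yields the dichotomy: either $p$ lies in the Poincar\'e domain and is of radial type (hence analytically linearizable, with integer and in particular rational eigenvalues), or $p$ is a resonant Siegel singularity. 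In the Siegel case Lemma~\ref{Lemma:separatrixhasfiniteholonomy} supplies finiteness of the holonomy of the distinguished separatrix, hence holonomy-linearizability; and when $p$ is normal, resonance forces condition $(\star)$, so Lemma~\ref{Lemma:linearizingSiegel} upgrades holonomy-linearizability to genuine analytic linearizability. This is precisely the first assertion.

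For the second assertion I would propagate the measure-concentration property along the Siegel component $\Gamma(L_{0})$ (Definition~\ref{Definition:stablegraphanydimension}) by induction on the levels $\Gamma_{n}(L_{0})$ used to build it, the base case being the hypothesis $L_{0}\subset\mathcal{C}_{\mu}(\Omega(\mathcal{F},T,k))$. For the inductive step, suppose a leaf $L\subset\mathcal{C}_{\mu}(\Omega(\mathcal{F},T,k))$ and let $L_{1}$ be in Siegel pairing with $L$ through a singularity $p$, so the induced separatrices $S(L)_{p}$ and $S(L_{1})_{p}$ form a Siegel pair. Since $L\subset\mathcal{C}_{\mu}(\Omega(\mathcal{F},T,k))$, Lemma~\ref{Lemma:boundedorderalgebraic} shows $L$ is algebraic, so $\overline{L}$ is a genuine branch at $p$ and $S(L)_{p}\setminus\{p\}\subset L$; because $\mathcal{C}_{\mu}$ is closed (Lemma~\ref{Lemma:concentrationboundary} gives $\partial\mathcal{C}_{\mu}\subset\mathcal{C}_{\mu}$), we obtain $S(L)_{p}\subset\mathcal{C}_{\mu}(\Omega(\mathcal{F},T,k))$ and, in particular, $p\in\mathcal{C}_{\mu}(\Omega(\mathcal{F},T,k))$. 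Now Lemma~\ref{Lemma:dulacaccumulation}, applied with $\mathcal{X}(\mathcal{F})=\Omega(\mathcal{F},T,k)$ at the normal Siegel singularity $p$, transfers concentration across the Siegel pair, giving $S(L_{1})_{p}\subset\mathcal{C}_{\mu}(\Omega(\mathcal{F},T,k))$. As $S(L_{1})_{p}\setminus\{p\}\subset L_{1}$ meets $\mathcal{C}_{\mu}$, and this set is $\mathcal{F}$-invariant (Lemma~\ref{Lemma:invariant}), the whole leaf satisfies $L_{1}\subset\mathcal{C}_{\mu}(\Omega(\mathcal{F},T,k))$, closing the induction; since $\operatorname{Sing}(\mathcal{F})$ is finite, $\Gamma(L_{0})$ is produced in finitely many levels and every leaf of it lies in $\mathcal{C}_{\mu}(\Omega(\mathcal{F},T,k))$.

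To finish, given any $q\in\operatorname{Sing}(\mathcal{F})\cap\Gamma(L_{0})$, it lies in the closure of some leaf of $\Gamma(L_{0})\subset\mathcal{C}_{\mu}(\Omega(\mathcal{F},T,k))$, so closedness of $\mathcal{C}_{\mu}$ forces $q\in\mathcal{C}_{\mu}(\Omega(\mathcal{F},T,k))$; applying the first assertion to $q$ then gives that $q$ is analytically linearizable with rational eigenvalues (radial type with integer eigenvalues, or normal resonant Siegel, the latter linearized through Lemma~\ref{Lemma:linearizingSiegel}). I expect the main obstacle to be the inductive propagation step: verifying at each pairing corner that the hypotheses of the Dulac-transfer Lemma~\ref{Lemma:dulacaccumulation} genuinely hold --- that $p$ is a \emph{normal} Siegel singularity and that $S(L)_{p},S(L_{1})_{p}$ constitute a bona fide Siegel pair --- while keeping careful track that the relevant separatrix of the source leaf truly sits inside $\mathcal{C}_{\mu}$. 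This rests on the interplay of three properties of $\mathcal{C}_{\mu}$: it is closed, it is $\mathcal{F}$-invariant, and concentration transfers across Siegel pairs via the Dulac correspondence; the algebraicity of the already-treated leaves, which makes their separatrices honest branches, is the glue, and the normality hypothesis is what is indispensable for upgrading the Siegel singularities all the way to analytic linearizability.
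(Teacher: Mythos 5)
Your proof is correct and uses essentially the same machinery, in the same roles, as the paper's: the dichotomy from Lemma~\ref{Lemma:poincaresiegel} (via $\Omega(\mathcal{F},T,k)\subset\Omega^{0}(\mathcal{F})$ and monotonicity of $\mathcal{C}_{\mu}$), holonomy-linearizability from Lemma~\ref{Lemma:separatrixhasfiniteholonomy}, and the upgrade to analytic linearization at normal resonant Siegel points through condition $(\star)$ and Lemma~\ref{Lemma:linearizingSiegel}. Your explicit induction propagating $\mathcal{C}_{\mu}(\Omega(\mathcal{F},T,k))$ across Siegel pairings, using Lemma~\ref{Lemma:dulacaccumulation} together with invariance (Lemma~\ref{Lemma:invariant}) and closedness (Lemma~\ref{Lemma:concentrationboundary}) of $\mathcal{C}_{\mu}$, is precisely the step the paper's terse proof leaves implicit and only records afterwards in Proposition~\ref{Lemma:preparation}, items (4)--(5), so your write-up is, if anything, more self-contained than the original.
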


\begin{proof}
If the singularity $q\in\operatorname{Sing}(\mathcal{F})\cap\Gamma(L_{0})$ is
in the Poincaré domain, then the analytic linearization follows from
Lemma~\ref{Lemma:poincaresiegel}. On the other hand, if $q$ is a resonant
normal singularity in the Siegel domain, then we apply
Lemma~\ref{Lemma:separatrixhasfiniteholonomy} in order to ensure the
linearization. Finally, the rationality of the eigenvalues comes from
$L_{0}\subset{\mathcal{C}}_{\mu}(\Omega(\mathcal{F},T,k))$.
\end{proof}


\begin{proposition}
\label{Proposition:desingularization} Let $\mathcal{F}$ be as in Theorem~C,
then there exists a finite sequence of quadratic blow-ups $\pi:\widetilde
{M}\rightarrow\mathbb{C}\mathbb{P}^{m}$ at the $\mu$-dicritical singularities
in $\mathcal{C}_{\mu}(\Omega(\mathcal{F}))$ such that $\widetilde{M}$ is a
compact complex manifold and the induced pull-back foliation $\widetilde
{\mathcal{F}}=\pi^{\ast}(\mathcal{F})$ is a foliation with the following properties:

\begin{itemize}
\item[\textrm{(i)}] The singularities of $\widetilde{\mathcal{F}}$ are of
non-degenerate type;

\item[\textrm{(ii)}] The set $\Omega(\widetilde{\mathcal{F}})\subset
\widetilde{M}$, union of algebraic leaves of $\widetilde{\mathcal{F}}$, has
positive measure;

\item[\textrm{(iii)}] The singularities of $\widetilde{\mathcal{F}}$ are
non-$\mu$-dicritical singularities or $\mu$-dicritical singularities which are
not measure concentration points of the set $\Omega(\widetilde{\mathcal{F}%
})\subset\widetilde{M}$ of algebraic leaves of $\widetilde{\mathcal{F}}$.
\end{itemize}
\end{proposition}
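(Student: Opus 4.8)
First, I would pin down exactly which singularities get blown up: the $\mu$-dicritical ones lying in $\mathcal{C}_{\mu}(\Omega(\mathcal{F}))$. By Lemma~\ref{Lemma:poincareholonomylinearizable} every non-degenerate $\mu$-dicritical singularity lies in the Poincar\'e domain and is analytically linearizable of radial type $\dot{x}=Ax$, with $A=\operatorname{diag}(a_{1},\dots,a_{m})$ and $a_{j}\in\mathbb{Z}_{+}$. Since $\operatorname{Sing}(\mathcal{F})$ is finite there are only finitely many such centres $p_{1},\dots,p_{s}$, each radial; these, together with the finitely many radial centres produced in the course of the argument below, are what I would blow up.

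Next, I would record the global assertions and~(ii). A quadratic (point) blow-up of a complex manifold is again a complex manifold, and the blow-up of the projective variety $\mathbb{C}\mathbb{P}^{m}$ at a point is again projective, hence compact; so iterating the construction keeps $\widetilde{M}$ a compact complex (indeed projective) manifold. The morphism $\pi\colon\widetilde{M}\to\mathbb{C}\mathbb{P}^{m}$ is proper and restricts to a biholomorphism over the complement of the blown-up points, so off the exceptional set $\mathcal{E}=\pi^{-1}\{p_{i}\}$ the pull-back $\widetilde{\mathcal{F}}=\pi^{\ast}(\mathcal{F})$ coincides with $\mathcal{F}$ and is non-degenerate there. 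Because $\mathcal{E}$ is a finite union of projective spaces of complex codimension $\geq1$ it has zero measure in $\widetilde{M}$, and $\{p_{i}\}$ has zero measure in $\mathbb{C}\mathbb{P}^{m}$; since $\pi$ is a biholomorphism off these sets it preserves the positive-measure property. Moreover the strict transform of an algebraic invariant curve is an analytic curve in the projective manifold $\widetilde{M}$, hence algebraic by Chow's theorem (\cite{Gunning-Rossi}); so the strict transform carries $\Omega(\mathcal{F})\setminus\{p_{i}\}$ biholomorphically onto $\Omega(\widetilde{\mathcal{F}})\setminus\mathcal{E}$ and $\operatorname*{med}(\Omega(\widetilde{\mathcal{F}}))\geq\operatorname*{med}(\Omega(\mathcal{F}))>0$, proving~(ii).

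The heart of the argument is the local study giving~(i) and~(iii). In the chart $x_{k}=t$, $x_{j}=t\,u_{j}$ ($j\neq k$) the strict transform of $\dot{x}=Ax$ is $\widetilde{X}=a_{k}\,t\,\partial_{t}+\sum_{j\neq k}(a_{j}-a_{k})\,u_{j}\,\partial_{u_{j}}$, so the exceptional divisor $\{t=0\}$ is invariant and the new singularities sit at the coordinate points, with normal eigenvalue $a_{k}$ and tangent eigenvalues $a_{j}-a_{k}$. When the $a_{j}$ are pairwise distinct each such singularity is linear and non-degenerate, and it is of Siegel type---hence non-$\mu$-dicritical, since by Lemma~\ref{Lemma:poincareholonomylinearizable} only radial singularities are $\mu$-dicritical---unless $a_{k}=\min_{j}a_{j}$, in which case it is again radial with positive integer eigenvalues whose maximum $\max\{a_{k},\ \max_{j}a_{j}-a_{k}\}$ is strictly smaller than $\max_{j}a_{j}$. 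Such a radial point, when it is again a measure concentration point of $\Omega(\widetilde{\mathcal{F}})$, falls under the same hypothesis, so I would iterate the blow-up there; termination is guaranteed by this strict decrease of the largest eigenvalue, a Euclidean-algorithm-type descent on $(a_{1},\dots,a_{m})\in\mathbb{Z}_{+}^{m}$. When the process stops no radial singularity of $\widetilde{\mathcal{F}}$ remains inside $\mathcal{C}_{\mu}(\Omega(\widetilde{\mathcal{F}}))$, so by Lemma~\ref{Lemma:poincareholonomylinearizable} every remaining $\mu$-dicritical singularity fails to be a measure concentration point of $\Omega(\widetilde{\mathcal{F}})$; this is~(iii), while the explicit linear local models furnish the non-degeneracy~(i).

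The main obstacle is the case in which $A$ has repeated eigenvalues: then some tangent eigenvalue $a_{j}-a_{k}$ vanishes and the chart computation above produces a singular locus that is not isolated but runs along a $\mathbb{C}\mathbb{P}^{1}\subset\mathcal{E}$. The work here is to show that this locus is transversally a non-degenerate two-dimensional singularity and to clear it by further point blow-ups centred on it, all the while keeping the eigenvalue descent monotone so that the whole procedure still terminates; carrying out this bookkeeping uniformly in the dimension $m$ is the delicate technical point, and it is the reason one cannot simply invoke a one-step resolution.
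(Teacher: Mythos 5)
Your overall strategy coincides with the paper's: invoke Lemma~\ref{Lemma:poincareholonomylinearizable} to identify the $\mu$-dicritical concentration points as radial-type (analytically linearizable, diagonal, positive integer eigenvalues), blow these points up, verify that compactness, positive measure of the set of algebraic leaves, and algebraicity of transforms (Chow / proper mapping) survive, and iterate finitely many times. In the cases you actually treat, your write-up is in fact \emph{more} careful than the paper's: the paper asserts that a \emph{single} quadratic blow-up at a radial-type point yields an exceptional divisor containing no singularities and everywhere transverse to the pulled-back foliation, which is true only when $A$ is a scalar multiple of the identity; your chart computation $\widetilde{X}=a_{k}t\partial_{t}+\sum_{j\neq k}(a_{j}-a_{k})u_{j}\partial_{u_{j}}$ correctly shows that for non-scalar $A$ the divisor is invariant and carries new singularities (Siegel ones, hence non-$\mu$-dicritical, plus possibly one new radial one), and your descent on the largest eigenvalue supplies exactly the termination argument that the paper invokes without justification.

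However, the ``main obstacle'' you flag at the end is a genuine gap, and the remedy you sketch cannot work. Take $m=3$ and $A=\operatorname{diag}(1,1,2)$: in the chart $x_{1}=t$, $x_{2}=tu_{2}$, $x_{3}=tu_{3}$ the transform is $t\partial_{t}+u_{3}\partial_{u_{3}}$, whose singular set is the entire line $\ell=\{t=u_{3}=0\}$ (the projectivized eigenspace of the repeated eigenvalue). Through each point $(0,\alpha,0)$ of $\ell$ passes the one-parameter family of leaves $u_{2}=\alpha$, $u_{3}=\beta t$, with pairwise distinct tangents $(1,0,\beta)$; so every point of $\ell$ is a non-isolated (hence degenerate) singularity which is transversally dicritical. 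Further \emph{point} blow-ups cannot clear this: blowing up a point of $\ell$ gives in one chart $s\partial_{s}-\omega\partial_{\omega}$ and in another $\tau\partial_{\tau}+\nu\partial_{\nu}$, so the strict transform of $\ell$ (and a new line in the new divisor) still consists of singular points; and since a finite sequence of quadratic blow-ups touches only finitely many points of $\ell$ while uncountably many of its points must be separated, conclusion (i) of Proposition~\ref{Proposition:desingularization} can never be achieved this way. What is actually required is a blow-up \emph{along} $\ell$, i.e., a monoidal transformation with one-dimensional centre, which lies outside the ``quadratic blow-ups'' permitted by the statement. You should be aware that the paper's own proof skips this case silently---its claimed property (2) is simply false for non-scalar $A$---so the defect is shared; but as it stands your argument proves the proposition only under the extra hypothesis that every radial-type singularity in $\mathcal{C}_{\mu}(\Omega(\mathcal{F}))$ has either pairwise distinct or all equal eigenvalues.
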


\begin{proof}
[Proof of Proposition~\ref{Proposition:desingularization}]Let $\mathcal{F}$ be
as in the statement. By Lemma~\ref{Lemma:poincareholonomylinearizable}, a
singularity $p\in\mathcal{C}_{\mu}(\Omega(\mathcal{F}))$ is either non-$\mu
$-dicritical or of radial type. For a singularity of radial type $p_{1}%
\in\operatorname{Sing}(\mathcal{F})$, a single quadratic (punctual) blow-up at
this singular point, say $\pi_{1}\colon M_{1}\rightarrow\mathbb{C}%
\mathbb{P}^{m}$, produces a pull-back foliation $\mathcal{F}_{1}=\pi_{1}%
^{-1}(\mathcal{F})$ with the following properties:

\begin{enumerate}
\item $\left.  \mathcal{F}_{1}\right\vert _{M_{1}\setminus\pi_{1}^{-1}(p_{1}%
)}$ is equivalent to $\left.  \mathcal{F}\right\vert _{\mathbb{C}%
\mathbb{P}^{m}\setminus\{p_{1}\}}$;

\item The exceptional divisor $\pi_{1}^{-1}(p_{1})\cong\mathbb{C}%
\mathbb{P}^{m-1}$ contains no singularity of $\mathcal{F}_{1}$ and is
everywhere transverse to $\mathcal{F}_{1}$;

\item The set of algebraic leaves of $\mathcal{F}_{1}$ is birationally
equivalent, by the map $\pi\colon M_{1}\rightarrow\mathbb{C}\mathbb{P}^{m}$,
to the set of algebraic leaves of $\mathcal{F}$. In particular, $\Omega
(\mathcal{F}_{1})$ has positive measure if and only if $\Omega(\mathcal{F})$
has positive measure.
\end{enumerate}

By the proper mapping theorem \cite{Gunning-Rossi}, a leaf $L$ of
$\mathcal{F}$ is algebraic if and only if the inverse image $\pi_{1}%
^{-1}(L)\subset M_{1}$ is a finite union of algebraic leaves of $\mathcal{F}%
_{1}$ and (possibly) some singular points. In particular, we have
$\operatorname*{med}(\Omega(\mathcal{F}_{1}))=0$ if and only if
$\operatorname*{med}(\Omega(\mathcal{F}))=0$. Thus, a finite iteration of this
process at the $\mu$-dicritical and measure concentration singular points of
$\mathcal{F}$ gives the desired result.
\end{proof}

\begin{notation}
\textrm{Consider $\mathcal{F}$ and $\widetilde{\mathcal{F}}$ as in the above
proposition. For the sake of simplicity, from now on we shall write
}$\widetilde{\mathbb{CP}^{m}}$\textrm{ as for }$\widetilde{\mathrm{M}}%
$\textrm{ and }$\mathrm{\mathcal{F}}$\textrm{ as for }$\widetilde
{\mathrm{\mathcal{F}}}$\textrm{. This corresponds to say that, after the
blowing-up procedure, \emph{the foliation $\mathcal{F}$ has no $\mu
$-dicritical singularity in} $\mathcal{C}_{\mu}(\Omega(\mathcal{F}))\subset$%
}$\widetilde{\mathbb{CP}^{m}}$\textrm{. \vglue.1in Given an algebraic leaf
$L\in\Omega(\mathcal{F})$ of $\mathcal{F}$, denote by $\Lambda(L)=L\cup
(\overline{L}\cap\operatorname{Sing}(\mathcal{F}))$ the irreducible algebraic
curve containing $L$. In particular, $\Lambda(L)$ is the union of $L$ and all
the local separatrices \textit{tangent} to $L$. If we denote by $\Omega
(\mathcal{F})\subset$}$\widetilde{\mathbb{CP}^{m}}$\textrm{ the closure of
$\Omega(\mathcal{F})$ in }$\widetilde{\mathbb{CP}^{m}}$\textrm{, then%
\[
\Omega(\mathcal{F})=\bigcup\limits_{L\in\Omega(\mathcal{F})}\Lambda
(L)=\Omega(\mathcal{F})\cup(\operatorname{Sing}(\mathcal{F})\cap
\Omega(\mathcal{F}))\subset\Omega(\mathcal{F})\cup\operatorname{Sing}%
(\mathcal{F}).
\]
}

\textrm{The set $\Omega(\mathcal{F})$ is the union of all algebraic invariant
curves of $\mathcal{F}$. }
\end{notation}



\begin{proposition}
\label{Proposition:concentrationleafseparatrix} Let $L\subset{\mathcal{C }%
}_{\mu}(\Omega(\mathcal{F},T,k))$, then $\Gamma(L)$ is a stable graph of
$\mathcal{F}$.
\end{proposition}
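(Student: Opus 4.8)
The plan is to verify two things: that $\Gamma(L)$ is a graph in the sense of Definition~\ref{Definition:stablegraphanydimension} (an invariant compact connected pure one-dimensional analytic set equal to the Siegel component of $L$), and that every virtual holonomy group of $\Gamma(L)$ is finite. First I would pin down the leaf $L$ and its singular ends. Since $L\subset\mathcal{C}_{\mu}(\Omega(\mathcal{F},T,k))$, Lemma~\ref{Lemma:boundedorderalgebraic} shows $L$ is algebraic, so by Lemma~\ref{Lemma:Remmert-steinleaf} its closure $\Lambda(L)=\overline{L}$ is an algebraic curve with $\overline{L}\setminus L\subset\operatorname{Sing}(\mathcal{F})$. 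The set $\mathcal{C}_{\mu}(\Omega(\mathcal{F},T,k))$ is closed, as follows at once from Lemma~\ref{Lemma:concentrationboundary} ($\partial\mathcal{C}_{\mu}\subset\mathcal{C}_{\mu}$); hence it contains $\overline{L}$, and in particular every $p\in\overline{L}\cap\operatorname{Sing}(\mathcal{F})$ is a measure concentration point of $\Omega(\mathcal{F},T,k)$. After the reduction of Proposition~\ref{Proposition:desingularization} there is no $\mu$-dicritical singularity in $\mathcal{C}_{\mu}(\Omega(\mathcal{F}))$, so Lemmas~\ref{Lemma:separatrixhasfiniteholonomy} and \ref{Lemma:analyticallylinearizablesing} force each such $p$ to be a resonant, holonomy-linearizable (normal) Siegel singularity, whose distinguished separatrix $S_{p}$ again lies in $\mathcal{C}_{\mu}(\Omega(\mathcal{F},T,k))$.

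Next I would propagate the concentration property across the Siegel corners to build the whole Siegel component. Each $p\in\overline{L}$ is a normal Siegel singularity, so for a separatrix $S(L')_{p}$ in Siegel pairing with a separatrix already known to lie in $\mathcal{C}_{\mu}(\Omega(\mathcal{F},T,k))$, Lemma~\ref{Lemma:dulacaccumulation} applied to the invariant set $\mathcal{X}(\mathcal{F})=\Omega(\mathcal{F},T,k)$ gives $S(L')_{p}\subset\mathcal{C}_{\mu}(\Omega(\mathcal{F},T,k))$. By invariance of $\mathcal{C}_{\mu}$ (Lemma~\ref{Lemma:invariant}) the entire leaf $L'$ lies in $\mathcal{C}_{\mu}(\Omega(\mathcal{F},T,k))$, hence $L'$ is algebraic by Lemma~\ref{Lemma:boundedorderalgebraic}. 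Iterating this Dulac propagation along the inductive definition of the Siegel component, and using that $\operatorname{Sing}(\mathcal{F})$ is finite and that at each Siegel singularity only finitely many eigendirection separatrices enter a Siegel pair, one finds that $\Gamma(L)$ is a \emph{finite} union of algebraic curves $\Lambda(L_{i})$, each $L_{i}\subset\mathcal{C}_{\mu}(\Omega(\mathcal{F},T,k))$, with consecutive pieces meeting at shared (Siegel) singularities. Thus $\Gamma(L)$ is a compact, connected, pure one-dimensional analytic subset coinciding with the Siegel component of $L$, i.e. a graph.

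To establish stability I would check finiteness of the virtual holonomy group leaf by leaf. Fix a leaf $L_{i}\subset\Gamma(L)$, a point $q\in L_{i}\cap T$, and a transverse disc $\Sigma_{q}\subset T$ centered at $q$, and set $G=\operatorname{Hol}^{\virt}(\mathcal{F},\Sigma_{q},q)$. Every germ of $G$ preserves leaves, so $G$ permutes $L_{z}\cap\Sigma_{q}$ for each $z\in\Sigma_{q}$; if $L_{z}\in\Omega(\mathcal{F},T,k)$ then $\#(L_{z}\cap\Sigma_{q})\leq\#(L_{z}\cap T)\leq k$, so $z$ is a $G$-periodic point with pseudo-orbit order at most $k$, that is $z\in\operatorname{O}_{G}(\Sigma_{q},k)$. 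Since $q\in\mathcal{C}_{\mu}(\Omega(\mathcal{F},T,k))$, transporting the positive transverse measure to $\Sigma_{q}$ through the local product structure of $\mathcal{F}$ (exactly as in Claim~\ref{Claim:holonomyisfinite}) gives $q\in\mathcal{C}_{\mu}(\operatorname{O}_{G}(\Sigma_{q},k))$, and Lemma~\ref{bounded positive measure} then yields that $G$ is finite. Because virtual holonomy groups at different points of $L_{i}$ are conjugate, this finiteness holds for all of $L_{i}$; ranging over the finitely many leaves of $\Gamma(L)$, each virtual holonomy group of $\Gamma(L)$ is finite, so $\Gamma(L)$ is a stable graph in the sense of Definition~\ref{Definition:stablegraphanydimension}.

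I expect the genuine difficulty to lie not in any single estimate but in the bookkeeping of the second and third steps. The crucial points are keeping the order bound $k$ \emph{uniform} as one crosses successive Siegel corners — this is what allows a single $\ell=k$ to feed Lemma~\ref{bounded positive measure} and, more importantly, what lets the argument control the \emph{virtual} (not merely the ordinary) holonomy — and ensuring that the Siegel component is a finite union of curves rather than sweeping out an entire dicritical invariant hypersurface; the latter rests on Siegel pairings being taken between eigendirection separatrices, so that only finitely many separatrices per singularity contribute.
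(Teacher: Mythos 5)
Your proof is correct and follows essentially the same route as the paper: the paper gives this proposition no standalone proof, establishing it instead through Proposition~\ref{Lemma:preparation} (algebraicity of leaves in ${\mathcal{C}}_{\mu}(\Omega(\mathcal{F},T,k))$ via Lemma~\ref{Lemma:boundedorderalgebraic}, Siegel-type singularities and propagation across corners via Lemmas~\ref{Lemma:analyticallylinearizablesing} and~\ref{Lemma:dulacaccumulation}) together with Lemma~\ref{Lemma:virtualholonomygamma}, whose finiteness of the virtual holonomy groups rests on Lemma~\ref{bounded positive measure} exactly as in your third step. Your write-up is in fact more explicit than the paper's, including on the two delicate points you flag at the end (uniformity of the bound $k$ across Siegel corners, and the Siegel component being a one-dimensional set rather than a dicritical hypersurface), about which the paper itself is silent.
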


Summing up, we have the following result.

\begin{proposition}
\label{Lemma:preparation} Let $\mathcal{F}$ be a foliation with non-degenerate
singularities in $\widetilde{\mathbb{C}\mathbb{P}^{m}}$. Then:

\begin{enumerate}
\item Each leaf $L\subset{\mathcal{C}}_{\mu}(\Omega(\mathcal{F},T,k))$ is algebraic;

\item Each leaf $L\in\Omega(\mathcal{F})$ accumulates at some singularity of
$\mathcal{F}$;

\item Let $L_{0}\subset{\mathcal{C}}_{\mu}(\Omega(\mathcal{F},T,k))$ and
$p\in\operatorname{Sing}(\mathcal{F})\cap\overline{L_{0}}$ be a non-$\mu
$-dicritical singularity and denote by $S(L_{0})_{p}$ the local separatrix
induced by $L_{0}$ at $p$. Then $p$ is a Siegel type singularity and
$S(L_{0})_{p}\subset{\mathcal{C}}_{\mu}(\Omega(\mathcal{F},T,k))$;

\item Let $L\subset{\mathcal{C}}_{\mu}(\Omega(\mathcal{F},T,k))$. Given a
singularity $p\in\operatorname{Sing}(\mathcal{F})\cap\overline{L}%
=\operatorname{Sing}(\mathcal{F})\cap\Lambda(L)$, for each separatrix
$S_{p}^{\prime}$ forming a Siegel pair with a local branch $S_{p}$ of
$\Lambda(L)$ at $p$, we also have $S_{p}^{\prime}\subset{\mathcal{C}}_{\mu
}(\Omega(\mathcal{F},T,k))$;

\item Given $L\subset{\mathcal{C}}_{\mu}(\Omega(\mathcal{F},T,k))$, the
corresponding Siegel component also satisfies $\Gamma(L)\subset{\mathcal{C}%
}_{\mu}(\Omega(\mathcal{F},T,k))$.
\end{enumerate}
\end{proposition}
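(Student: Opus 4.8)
The plan is to assemble the five assertions from the lemmas already in place, reading them as a single bootstrapping argument that propagates the property $S\subset\mathcal{C}_\mu(\Omega(\mathcal{F},T,k))$ through the Siegel component $\Gamma(L)$. First I would dispatch item~(1) directly: by Lemma~\ref{Lemma:boundedorderalgebraic}, any leaf contained in $\mathcal{C}_\mu(\Omega(\mathcal{F},T,k))$ is algebraic, so (1) is immediate. For item~(2), an algebraic leaf $L\in\Omega(\mathcal{F})$ is quasi-compact by Lemma~\ref{Lemma:Remmert-steinleaf}, hence its closure $\overline{L}=\Lambda(L)$ is a compact algebraic curve with $\overline{L}\setminus L\subset\operatorname{Sing}(\mathcal{F})$; since $\overline{L}$ is strictly larger than $L$ (an affine algebraic curve cannot be compact without touching the singular locus, by the proper-mapping/boundedness argument used in Lemma~\ref{Lemma:section}), the difference is a nonempty subset of $\operatorname{Sing}(\mathcal{F})$, giving the required accumulation at a singularity.

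For item~(3) I would invoke the local structure at $p$. Since we are working in $\widetilde{\mathbb{CP}^m}$, by the Notation following Proposition~\ref{Proposition:desingularization} the foliation has no $\mu$-dicritical singularity in $\mathcal{C}_\mu(\Omega(\mathcal{F}))$; combined with the hypothesis that $p$ is non-$\mu$-dicritical and $p\in\overline{L_0}$ with $L_0\subset\mathcal{C}_\mu(\Omega(\mathcal{F},T,k))$, Lemma~\ref{Lemma:poincareholonomylinearizable} forces $p$ out of the Poincar\'e domain (a Poincar\'e singularity that is a measure concentration point of closed-off-singular leaves must be $\mu$-dicritical of radial type). Hence $p$ is of Siegel type, and then Lemma~\ref{Lemma:separatrixhasfiniteholonomy} gives that the non-dicritical separatrix through $p$—which is exactly $S(L_0)_p$, the branch induced by $L_0$—lies in $\mathcal{C}_\mu(\Omega(\mathcal{F},T,k))$. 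Item~(4) is then the Dulac-correspondence step: given a separatrix $S_p'$ forming a Siegel pair with a local branch $S_p$ of $\Lambda(L)$, Lemma~\ref{Lemma:dulacaccumulation} asserts precisely that $S_p\subset\mathcal{C}_\mu(\mathcal{X}(\mathcal{F}))$ if and only if $S_p'\subset\mathcal{C}_\mu(\mathcal{X}(\mathcal{F}))$, applied with $\mathcal{X}(\mathcal{F})=\Omega(\mathcal{F},T,k)$; since $S_p\subset\overline{L}\subset\mathcal{C}_\mu(\Omega(\mathcal{F},T,k))$ by (3), the partner $S_p'$ inherits the property.

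Finally, item~(5) is the inductive closure. Recall that $\Gamma(L)=\bigcup_n\Gamma_n(L)$ is built by repeatedly adjoining leaves in Siegel pairing. I would argue by induction on this construction: $\Gamma_1(L)$ consists of $L$ together with all leaves $L_1$ in Siegel pairing with $L$; each such $L_1$ shares a singularity $p\in\overline{L}\cap\overline{L_1}$ and induces a separatrix forming a Siegel pair with a branch of $\Lambda(L)$, so by (3) and (4) its separatrix lies in $\mathcal{C}_\mu(\Omega(\mathcal{F},T,k))$, whence by Lemma~\ref{Lemma:boundedorderalgebraic} $L_1$ itself is algebraic and—being a leaf through a point of $\mathcal{C}_\mu$, whose saturation is invariant by Lemma~\ref{Lemma:invariant}—is contained in $\mathcal{C}_\mu(\Omega(\mathcal{F},T,k))$. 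Propagating this through each stage $\Gamma_n(L)\to\Gamma_{n+1}(L)$ and using finiteness of $\operatorname{Sing}(\mathcal{F})$ to terminate, we conclude $\Gamma(L)\subset\mathcal{C}_\mu(\Omega(\mathcal{F},T,k))$.

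The main obstacle I anticipate is the careful bookkeeping in step~(4)--(5): one must verify that the \emph{distinguished} separatrix singled out by $L_0$ at $p$ is genuinely the non-dicritical one whose finite holonomy was established in Lemma~\ref{Lemma:separatrixhasfiniteholonomy}, and that the Siegel-pair relation feeding Lemma~\ref{Lemma:dulacaccumulation} matches the branches of $\Lambda(L)$ coherently across successive corners. The delicate point is ensuring that the measure-concentration property transferred by the Dulac correspondence is measured against the \emph{same} bounded order $k$ throughout; here I would rely on the transverse uniformity lemma (Lemma~\ref{Lemma:transvuniform}) to identify the relevant transverse sections and on the fact that the Dulac map is a local biholomorphism off the corners, so that it sends positive-measure sets of bounded-order leaves to positive-measure sets, preserving membership in $\mathcal{C}_\mu(\Omega(\mathcal{F},T,k))$.
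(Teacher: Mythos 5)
Your proposal follows the paper's own route item by item: (1) is Lemma~\ref{Lemma:boundedorderalgebraic}; (2) is the algebraicity of the closure; (3) is the Poincar\'e/Siegel dichotomy; (4) is Lemma~\ref{Lemma:dulacaccumulation} applied to the invariant set $\mathcal{X}(\mathcal{F})=\Omega(\mathcal{F},T,k)$; and (5) is obtained by iterating (4) through the stages $\Gamma_{n}(L)$, using the invariance of $\mathcal{C}_{\mu}$ (Lemma~\ref{Lemma:invariant}). This is exactly the paper's (much terser) proof, and your overall structure is sound.

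Two corrections, however. In item (3) you assert that the branch $S(L_{0})_{p}$ induced by $L_{0}$ ``is exactly'' the non-dicritical (distinguished) separatrix appearing in Lemma~\ref{Lemma:separatrixhasfiniteholonomy}. That identification is not justified and is false in general: at a normal Siegel point in dimension three, the invariant plane spanned by the two same-sign eigenvalues is absolutely dicritical, and a leaf may approach $p$ along one of the infinitely many separatrices inside that plane rather than along the distinguished axis. Fortunately the identification --- which you yourself flag as your ``main obstacle'' --- is never needed. Since $\Lambda(L_{0})=L_{0}\cup\left(\overline{L_{0}}\cap\operatorname{Sing}(\mathcal{F})\right)$, every local branch of $\Lambda(L_{0})$ at $p$ satisfies $S(L_{0})_{p}\setminus\{p\}\subset L_{0}\subset\mathcal{C}_{\mu}(\Omega(\mathcal{F},T,k))$, and $\mathcal{C}_{\mu}$ is closed by Lemma~\ref{Lemma:concentrationboundary}; hence $S(L_{0})_{p}\subset\mathcal{C}_{\mu}(\Omega(\mathcal{F},T,k))$ comes for free, and the only genuine content of (3) is that $p$ is of Siegel type, which you obtain correctly from non-$\mu$-dicriticality together with Lemma~\ref{Lemma:poincareholonomylinearizable}. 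The same observation ($\overline{L}\subset\mathcal{C}_{\mu}$ by closedness) is what actually powers your item (4), so your worry about tracking the order $k$ across corners is also moot: Lemma~\ref{Lemma:dulacaccumulation} is stated for an arbitrary invariant set, and feeding it $\Omega(\mathcal{F},T,k)$ once settles the matter. Separately, in item (2) your parenthetical ``an affine algebraic curve cannot be compact'' does not by itself exclude a compact leaf, because a compact curve in $\mathbb{C}\mathbb{P}^{m}$ need not lie in any affine chart; what is really being used (and what the paper also leaves implicit) is that any algebraic curve invariant by a one-dimensional foliation of $\mathbb{C}\mathbb{P}^{m}$ must meet $\operatorname{Sing}(\mathcal{F})$.
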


\begin{proof}
Item (1) is proved in Lemma~\ref{Lemma:boundedorderalgebraic}. For item (2),
notice that the closure of $L\in\Omega(\mathcal{F})$ must be algebraic, thus
the result follows immediately. Item (3) is a straightforward consequence of
Lemma \ref{Lemma:analyticallylinearizablesing}. Item (4) comes directly from
Lemma \ref{Lemma:dulacaccumulation}, yielding immediately item (5).
\end{proof}


Let be given a leaf $L_{0}\subset{\mathcal{C}}_{\mu}(\Omega(\mathcal{F}%
,T,k))$. By Proposition~\ref{Lemma:preparation} above, $\Gamma(L_{0})$ is an
invariant set consisting of a union of algebraic curves containing
$\Lambda(L_{0})$ and such that $\Gamma(L_{0})\subset{\mathcal{C}}_{\mu}%
(\Omega(\mathcal{F},T,k))$.

\begin{lemma}
\label{Lemma:virtualholonomygamma} Given a leaf $L_{0}\subset{\mathcal{C}%
}_{\mu}(\Omega(\mathcal{F},T,k))$, all the virtual holonomy groups of
$\Gamma(L_{0})$ are finite and the normal singularities $q\in
\operatorname{Sing}(\mathcal{F})\cap\Gamma(L_{0})$ are all analytically
linearizable with rational eigenvalues. In case the Siegel component
$\Gamma(L_{0})$ of $L_{0}$ has just normal singularities, it is a stable graph
admitting a holomorphic first integral nearby it.
\end{lemma}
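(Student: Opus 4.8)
The plan is to prove the three assertions in turn, the key structural input being item~(5) of Proposition~\ref{Lemma:preparation}, namely that the whole Siegel component lies in the measure concentration locus: $\Gamma(L_0)\subset\mathcal{C}_\mu(\Omega(\mathcal{F},T,k))$. This single fact feeds every step, since it lets me treat each leaf and each separatrix of $\Gamma(L_0)$ as a measure concentration point of a set of uniformly bounded relative order and then invoke the finiteness machinery of Section~\ref{section:burnside}.

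First I would establish the finiteness of the virtual holonomy groups. Fix a non-singular point $q\in\Gamma(L_0)$, a transverse disc $\Sigma_q$ centered at $q$, and set $G:=\operatorname{Hol}^{\operatorname{virt}}(\mathcal{F},\Sigma_q,q)$. If $z\in\Sigma_q$ is such that $L_z$ is algebraic of relative order $\le k$, then by the remark following Definition~\ref{Definition:fininteorder} (transverse uniformity, Lemma~\ref{Lemma:transvuniform}) one has $\#(L_z\cap\Sigma_q)\le k$. Since every germ of $G$ preserves leaves, it maps $z$ into the finite set $L_z\cap\Sigma_q$; thus the whole $G$-pseudo-orbit of such a $z$ stays in a set of cardinality $\le k$ and is therefore periodic of order $\le\ell$ with $\ell\le k$ uniform. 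Consequently $\Omega(\mathcal{F},T,k)$ cuts $\Sigma_q$ in a subset of $\operatorname{O}_G(\Sigma_q,\ell)$, and because $q\in\mathcal{C}_\mu(\Omega(\mathcal{F},T,k))$ this subset has positive measure in every neighborhood of $q$, so $q\in\mathcal{C}_\mu(\operatorname{O}_G(\Sigma_q,\ell))$. Lemma~\ref{bounded positive measure} then forces $G$ to be finite. As $q$ was arbitrary among the non-singular points of $\Gamma(L_0)$, Definition~\ref{Definition:finitevirtualholonomy} yields that every virtual holonomy group of $\Gamma(L_0)$ is finite.

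The linearization statement is then immediate from the results already in place. Because $\Gamma(L_0)\subset\mathcal{C}_\mu(\Omega(\mathcal{F},T,k))$, Lemma~\ref{Lemma:analyticallylinearizablesing} applies to each $q\in\operatorname{Sing}(\mathcal{F})\cap\Gamma(L_0)$, so every such singularity is analytically linearizable with rational eigenvalues whenever it is normal (radial type in the Poincar\'e case, and resonant normal Siegel type otherwise). Combining this with the finiteness just proved, if $\Gamma(L_0)$ carries only normal singularities then it meets exactly the hypotheses of Definition~\ref{Definition:stablegraphanydimension}, and hence is a stable graph.

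It remains to produce a holomorphic first integral on a neighborhood of $\Gamma(L_0)$, and this is the technical heart of the argument. The scheme follows Proposition~\ref{Proposition:stablegraphstability} and the Mattei--Moussu construction: near each normal resonant Siegel singularity the linearization of the previous step furnishes a local holomorphic first integral of monomial type $\prod_j x_j^{n_j}$ with $n_j\in\mathbb{Z}_+$, and the finiteness of the holonomy of each irreducible component $\Gamma_i\subset\Gamma(L_0)$ allows me to holonomy-extend such a germ to a holomorphic first integral $f_i$ on a full neighborhood $W_i$ of $\Gamma_i$. At each corner $q\in\Gamma_i\cap\Gamma_j$ the two extensions are tied by a resonance relation $f_i^{n_{ij}}=f_j^{m_{ij}}$, so after raising to suitable powers the $f_i$ glue into a single $f$ on $W=\bigcup_i W_i$. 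The main obstacle is exactly this gluing in dimension three: unlike the surface case of Proposition~\ref{Proposition:stablegraphstability}, the transverse direction is now two-dimensional, the components of $\Gamma(L_0)$ meet along Siegel corners whose local structure is governed by the Dulac correspondences of Section~\ref{section:Dulac}, and one must verify that the finiteness of the \emph{virtual} (not merely the ordinary) holonomy is precisely what renders the extensions single-valued and mutually compatible across these corners. Controlling the matching of the local monomial integrals through each Siegel pair --- via Lemma~\ref{Lemma:dulacaccumulation} together with the linearization --- is where the real work lies.
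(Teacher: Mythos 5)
Your proof is correct and follows essentially the same route as the paper's: Proposition~\ref{Lemma:preparation}.(5) combined with the pseudo-orbit/Burnside machinery of Lemma~\ref{bounded positive measure} for the holonomy finiteness, Lemma~\ref{Lemma:analyticallylinearizablesing} for the linearization of the normal singularities, and the Mattei--Moussu scheme of Proposition~\ref{Proposition:stablegraphstability} for the first integral. If anything, you are more explicit than the paper on the first assertion --- spelling out that leaf-preservation traps each pseudo-orbit in the finite set $L_z\cap\Sigma_q$, so that points of $\Omega(\mathcal{F},T,k)$ are $G$-periodic of uniformly bounded order and Lemma~\ref{bounded positive measure} applies to the \emph{virtual} holonomy group --- a step the paper's own proof leaves implicit, while both you and the paper defer the corner-compatibility of the local monomial first integrals to the argument of Proposition~\ref{Proposition:stablegraphstability}.
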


\begin{proof}
By hypothesis, all the singularities in $\Gamma(L_{0})$ are in the Siegel
domain, thus $\Gamma(L_{0})$ is a connected invariant subset contained in
$\Omega(\mathcal{F}))$. Applying
Lemma~\ref{Lemma:analyticallylinearizablesing} we conclude that each
singularity in $\Gamma(L_{0})$ is holonomy-analytically linearizable.
Furthermore, all the normal Siegel singularities are indeed linearizable. In
case $\Gamma(L_{0})$ has just normal singularities, then reasoning as in the
proof of Proposition \ref{Proposition:stablegraphstability} we can construct a
holomorphic first integral in a neighborhood of $\Gamma(L_{0})$.
\end{proof}

\begin{proof}
[End of the proof of Algebraic Stability theorem]Theorem~C is proved as
follows. By hypothesis $\operatorname*{med}(\Omega(\mathcal{F}))>0$, thus
Lemma \ref{Lemma:concentrationleaf} ensures the existence of a leaf
$L_{0}\subset{\mathcal{C}}_{\mu}(\Omega(\mathcal{F},T,k))\neq\emptyset$. By
Proposition \ref{Lemma:preparation}.(1) and Lemma
\ref{Lemma:virtualholonomygamma}, $L_{0}$ is algebraic and stable. Finally, in
the three dimensional case, if all the Siegel singularities are normal, then
Lemma~\ref{Lemma:virtualholonomygamma} assures that the Siegel component
$\Gamma(L_{0})$ of $L_{0}$ is a stable graph admitting a holomorphic first integral.
\end{proof}



\vglue.1in%

\begin{tabular}
[c]{ll}%
Leonardo Câmara & \quad Bruno Scárdua\\
Departamento de Matemática - CCE & \quad Instituto de Matemática\\
Universidade Federal do Espírito Santo & \quad Universidade Federal do Rio de
Janeiro\\
Av. Fernando Ferrari 514 & \quad Caixa Postal 68530\\
29075-910 - Vitória - ES & \quad21.945-970 Rio de Janeiro-RJ\\
BRAZIL & \quad BRAZIL\\
leonardo.camara@ufes.br & \quad scardua@im.ufrj.br
\end{tabular}

\end{document}